\newcommand{\mathsym}[1]{{}}
\newcommand{\unicode}[1]{{}}
\newtheoremstyle{my theoremstyle}
{1.0em}                    
    {1.0em}                    
    {\itshape}                   
    {}                           
    {\scshape}                   
    {.}                          
    {.5em}                       
    {}  
\newtheoremstyle{dfn}
{1.0em}                    
    {1.0em}                    
    {}                   
    {}                           
    {\scshape}                   
    {.}                          
    {.5em}                       
    {}  
\theoremstyle{my theoremstyle}
   \newtheorem{thm}{Theorem}[section]
   \newtheorem{lem}[thm]{Lemma}
   \newtheorem{prop}[thm]{Proposition}
   \newtheorem{cor}[thm]{Corollary}
   \newtheorem{conj}[thm]{Conjecture}
\theoremstyle{dfn}
   \newtheorem{dfn}[thm]{Definition}
\theoremstyle{remark}   
   \newtheorem{rmk}[thm]{{\scshape Remark}}
\newcommand{\Z}{\mathbb{Z}}
\newcommand{\Q}{\mathbb{Q}}
\newcommand{\C}{\mathbb{C}}
\newcommand{\F}{\overline{\mathbb{F}}_p}
\newcommand{\e}{\varepsilon}
\newcommand{\tw}{\widetilde{\omega}}
\numberwithin{equation}{section}
\newcommand{\ub}{\underline{b}}
\newcommand{\ua}{\underline{a}}
\newcommand{\lm}{\lambda}
\newcommand{\Spec}{\operatorname{Spec}}
\date{\today}
\begin{document}
\title[$p$-adic hypergeometric functions of logarithmic type]{On special values of generalized $p$-adic hypergeometric functions of logarithmic type}
\author{Yusuke Nemoto}
\date{\today}
\address{Graduate School of Science, Chiba University, 
Yayoicho 1-33, Inage, Chiba, 263-8522 Japan.}
\email{y-nemoto@waseda.jp}
\keywords{$p$-adic hypergeometric functions; congruence relations; $p$-adic regulator.}
\subjclass[2020]{14F30, 19F27, 19F15, 11S80, 33C20}

\maketitle

\begin{abstract}
We introduce a new type of $p$-adic hypergeometric functions, which are generalizations of $p$-adic hypergeometric functions of logarithmic type 
defined by Asakura, and show that these functions satisfy the congruence relations similar to Asakura's. 
We also give numerical computations of the special values of these functions at $t=1$ and prove that these values are equal to zero under some conditions.  
\end{abstract}

\section{Introduction} 
Let $p$ be a prime and $s \geq 2$ be an integer.  
For an $s$-tuple $\ua=(a_1, \ldots, a_s) \in \Z_p^s$ and an $(s-1)$-tuple $\ub=(b_1, \ldots, b_{s-1}) \in (\Z_p \backslash \Z_{ \leq 0})^{s-1}$, we define the \textit{hypergeometric series} by  
$$F_{\ua, \ub}(t)={_{s}F_{s-1}}\left( 
\begin{matrix}
a_1, \cdots, a_s \\
b_1, \ldots b_{s-1}
\end{matrix}
; t
\right)
=
\sum_{n=0}^{\infty}\dfrac{(a_1)_n \cdots (a_s)_n}{(b_1)_n \cdots (b_{s-1})_n (1)_n}t^n. 
$$
Here, $(\alpha)_n=\alpha (\alpha+1) \cdots (\alpha +n-1)$ denotes the Pochhammer symbol. 
When $\ub=\underline{1}:=(1,\ldots, 1)$, $F_{\ua, \underline{1}}(t)$ belongs to $\Z_p[[t]]$ for all $\ua \in \Z_p^s$; otherwise it belongs to $\Q_p[[t]]$ in general.  
For $a \in \Z_p$, let $a'$ be the \textit{Dwork prime}, which is defined to be $a'=(a+l)/p$ where $l \in \{0, \ldots, p-1\}$ is the unique integer such that $a+l \equiv 0 \pmod{p}$. 
Let $a^{(i)}$ be the $i$th Dwork prime defined by $a^{(i)}=(a^{(i-1)})'$ and $a^{(0)}=a$.  
Put 
$$F^{(i)}_{\ua, \ub}(t):={_{s}F_{s-1}}\left( 
\begin{matrix}
a^{(i)}_1, \cdots, a^{(i)}_s \\
b^{(i)}_1, \ldots b^{(i)}_{s-1}
\end{matrix}
; t
\right). 
$$
We drop $\ub$ from the notation when $\ub = \underline{1}$. 
In this paper, we consider those $\ua$ and $\ub$ for which $F^{(i)}_{\ua, \ub}(t) \in \Z_p[[t]]$ for any $i \geq 0$ (see the conditions (i) and (ii) in Theorem \ref{theorem:1}). 
Then Dwork defines the \textit{$p$-adic hypergeometric function} by $\mathscr{F}^{\rm Dw}_{\ua, \ub}(t)=F_{\ua, \ub}(t)/F^{(1)}_{\ua, \ub}(t^p)$ and 
proves the congruence relation
\begin{align} \label{cong1}
\mathscr{F}_{\ua, \ub}^{\rm Dw}(t) \equiv \dfrac{[F_{\ua, \ub}(t)]_{<p^n}}{[F^{(1)}_{\ua, \ub}(t^p)]_{<p^n}} \pmod{p^n \Z_p[[t]]}
\end{align}
for any $n \geq 1$ (see Theorem \ref{theorem:1}). 
Here, for a power series $f(t)=\sum a_it^i$, we write $[f(t)]_{<n}=\sum_{i<n}a_it^i$ the truncated polynomial.  
Using \eqref{cong1}, we can define special values (cf. \cite[Corollary 2.3]{As2}), which relate to the unit roots of elliptic curves of Legendre type (see \cite[Theorem (8.1)]{Dw}).

Let $W=W(\overline{\mathbb{F}}_p)$ be the Witt ring of $\overline{\mathbb{F}}_p$ and $\sigma$ be the $p$th Frobenius on $W[[t]]$ defined by $\sigma(t)=ct^p$ ($c \in 1+pW$). 
Let $\psi_p(z)$ be the $p$-adic digamma function and $\gamma_p$ be the $p$-adic Euler constant defined in Section \ref{log_pHG}, and $\log \colon \C_p^* \to \C_p$ be the Iwasawa logarithmic function. 
Put 
$$G_{\ua}(t)=\sum_{i=1}^{s}\psi_p(a_i)+ s\gamma_p-p^{-1} \log(c) +\int_0^t (F_{\ua}(t)-F^{(1)}_{\ua}(t^{\sigma}))\dfrac{dt}{t}, $$
where $\int_0^t(-)\frac{dt}{t}$ is an operator which sends a power series $\sum_{n=1}^{\infty} a_nt^n$ to $\sum_{n=1}^{\infty} \frac{a_n}{n} t^n$. 
In the paper \cite{As}, Asakura defines a new type of the $p$-adic hypergeometric function by 
$$\mathscr{F}^{(\sigma)}_{\ua}(t)=\dfrac{G_{\ua}(t)}{F_{\ua}(t)} \in W[[t]], $$
which is called the \textit{$p$-adic hypergeometric function of logarithmic type}.   
If $a_i \not \in \Z_{\leq 0}$ for any $i$, Asakura \cite[Theorem 3.2]{As} proves a congruence relation of $\mathscr{F}^{(\sigma)}_{\ua}(t)$ similar to \eqref{cong1}. 
Thanks to this, we can define special values (see \cite[Corollary 3.4]{As}), which relate to the $p$-adic regulators of hypergeometric motives whose periods are described by $F_{\ua}(t)$ (e.g. elliptic curves of Legendre type, see \cite{As, As3}).

In this paper, we generalize the $p$-adic hypergeometric function of logarithmic type to the case $\ub \neq \underline{1}$ and prove the congruence relations. 
Suppose that $\ua$ and $\ub$ satisfy the conditions (i) and (ii) in Theorem \ref{theorem:1}, i.e. $F^{(i)}_{\ua, \ub}(t) \in \Z_p[[t]]$ and satisfies the congruence relation \eqref{cong1}.  
Put 
$$G_{\ua, \ub}(t)= \sum_{i=1}^{s}\psi_p(a_i)-\sum_{i=1}^{s-1}\psi_p(b_i) +\gamma_p-p^{-1} \log(c) +\int_0^t (F_{\ua, \ub}(t)-F_{\ua, \ub}^{(1)}(t^{\sigma}))\dfrac{dt}{t}$$
and we define the \textit{(generalized) $p$-adic hypergeometric function of logarithmic type} by 
\begin{align*} 
\mathscr{F}^{(\sigma)}_{\ua, \ub}(t)= \dfrac{G_{\ua, \ub}(t)}{F_{\ua, \ub}(t)}. 
\end{align*}
When $\ub=\underline{1}$, $\mathscr{F}^{(\sigma)}_{\ua, \ub}(t)$ agrees with $\mathscr{F}_{\ua}^{(\sigma)}(t)$ since $\psi_p(1)=-\gamma_p$ by \cite[Theorem 2.6]{As}.  
We prove $\mathscr{F}^{(\sigma)}_{\ua, \ub}(t) \in W[[t]]$ (see Lemma \ref{inW}) and the following congruence relation.  
\begin{thm} \label{main:1}
Suppose that $\ua$ and $\ub$ satisfy the conditions (i) and (ii) in Theorem \ref{theorem:1}, and $a_i \not \in \Z_{\leq 0}$ for all $i$. 
If $p$ is odd, then for all $n \geq 1$, we have 
\begin{align*} 
\mathscr{F}^{(\sigma)}_{\ua, \ub}(t)\equiv \dfrac{[G_{\ua, \ub}(t)]_{<p^n}}{[F_{\ua, \ub}(t)]_{<p^n}} \pmod{p^nW[[t]]}.  
\end{align*}
If $p=2$, the congruence above holds modulo $p^{n-1}W[[t]]$. 
\end{thm}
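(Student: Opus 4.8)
The plan is to follow the strategy Asakura uses for the case $\ub = \underline{1}$ in \cite[Theorem 3.2]{As}, inserting at each step the bookkeeping forced by the denominator parameters $\ub$. Write $F := F_{\ua, \ub}(t)$ and $\Phi := F^{(1)}_{\ua, \ub}(t^{\sigma})$, and let
\[
C := \sum_{i=1}^{s}\psi_p(a_i) - \sum_{i=1}^{s-1}\psi_p(b_i) + \gamma_p - p^{-1}\log(c)
\]
be the constant term of $G := G_{\ua, \ub}(t)$, so that $G = C + H$ with $H := \int_0^t (F - \Phi)\frac{dt}{t}$ having zero constant term. Since $F(0) = [F]_{<p^n}(0) = 1$, both $F$ and $[F]_{<p^n}$ are units in $W[[t]]$, and $G \in W[[t]]$ by Lemma \ref{inW}. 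Hence
\[
\mathscr{F}^{(\sigma)}_{\ua, \ub}(t) - \frac{[G]_{<p^n}}{[F]_{<p^n}} = \frac{G\,[F]_{<p^n} - [G]_{<p^n}\,F}{F\,[F]_{<p^n}}
\]
lies in $p^n W[[t]]$ if and only if its numerator does, and the first reduction is to this numerator congruence.

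Expanding the numerator with $G = C + H$, and using that $[\,\cdot\,]_{<p^n}$ commutes with the operator $\int_0^t(-)\frac{dt}{t}$ (both act diagonally on monomials), so that $[H]_{<p^n} = \int_0^t([F]_{<p^n} - [\Phi]_{<p^n})\frac{dt}{t}$, the numerator equals
\[
\bigl(H\,[F]_{<p^n} - [H]_{<p^n}\,F\bigr) - C\bigl(F - [F]_{<p^n}\bigr).
\]
Thus the theorem is equivalent to the single congruence
\begin{equation*}
H\,[F]_{<p^n} - [H]_{<p^n}\,F \equiv C\bigl(F - [F]_{<p^n}\bigr) \pmod{p^n W[[t]]} \tag{$\star$}
\end{equation*}
(resp. modulo $p^{n-1}$ when $p = 2$). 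The two inputs I would feed into $(\star)$ are: first, the generalized Dwork congruence for $F$ provided by Theorem \ref{theorem:1}, written in the cross-multiplied form $F\,[\Phi]_{<p^n} \equiv [F]_{<p^n}\,\Phi \pmod{p^n}$, which controls $F - \Phi = t\,\frac{dH}{dt}$ and so relates the tail of $H$ to that of $F$; and second, the overconvergence behind Lemma \ref{inW}, namely that the Dwork congruences make the $t^m$-coefficient of $F - \Phi$ divisible by the $p$-part of $m$, so that dividing by $m$ in $H$ introduces no denominators and the tail of $H$ stays $p$-adically controlled.

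The crux, and the step I expect to be the main obstacle, is matching the constant $C$: one must show that the ``constant of integration'' produced when integrating the truncated difference $[F]_{<p^n} - [\Phi]_{<p^n}$ across the truncation boundary is congruent to $C$. Concretely, after substituting the Dwork congruence, the left-hand side of $(\star)$ reduces modulo $p^n$ to a multiple of the tail $F - [F]_{<p^n}$ whose coefficient is a partial sum of harmonic type, $\sum_k \frac{1}{a_i + k}$ for the numerator parameters and $-\sum_k \frac{1}{b_i + k}$ for the denominator parameters, together with a $p^{-1}\log(c)$ term coming from the Frobenius twist $\sigma(t) = c t^p$. I would then prove, as the key lemma, that the $p$-adic limit of these truncated sums, taken by telescoping over the Dwork primes $\ua^{(i)}, \ub^{(i)}$, is exactly $\sum_i \psi_p(a_i) - \sum_i \psi_p(b_i) + \gamma_p - p^{-1}\log(c) = C$; this is where the hypothesis $a_i \notin \Z_{\leq 0}$ enters, guaranteeing that the $\psi_p(a_i)$ are defined. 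Here the sign pattern (numerator parameters entering with $+$, denominator parameters with $-$) is dictated by whether a parameter sits in the numerator or the denominator Pochhammer symbol; establishing this digamma identity for arbitrary $b_i$, generalizing $\psi_p(1) = -\gamma_p$ of \cite[Theorem 2.6]{As}, is the genuinely new computation relative to the $\ub = \underline{1}$ case.

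For the precision, I would run the whole argument as an induction on $n$, so that the new coefficients in degrees $[p^n, p^{n+1})$ entering at each stage are precisely those governed by the next Dwork prime, making the telescoping transparent. For odd $p$ every estimate involved --- Dwork's congruence, the divisibility of the $t^m$-coefficient of $F - \Phi$ by the $p$-part of $m$, and the convergence of the harmonic sums to $\psi_p$ --- holds modulo $p^n$, yielding the stated congruence. For $p = 2$ these $2$-adic estimates (equivalently the valuation of $1/m$ for $m$ a power of $2$, and the weaker form of Dwork's congruence at the prime $2$) are off by exactly one power of $2$, which I would track through $(\star)$ to obtain the congruence modulo $p^{n-1}$.
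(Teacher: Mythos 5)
Your reduction to the numerator congruence $(\star)$ and your constant-matching step are sound as far as they go: the identity you want, that truncated harmonic-type sums converge $p$-adically to $\sum_i\psi_p(a_i)-\sum_i\psi_p(b_i)+\gamma_p$, is exactly what the paper proves in Lemma \ref{lemma:1}, namely $C^{(1)}_{mp^{n-1}}/C_{mp^n}\equiv 1-mp^n\bigl(\sum_i\psi_p(a_i)-\sum_i\psi_p(b_i)+\gamma_p\bigr)\pmod{p^{2n}}$, and it is not the genuinely new computation you take it to be --- it follows almost immediately by dividing two instances of Asakura's \cite[Lemma 3.8]{As}, one for the $a$-parameters and one for the $b$-parameters, which is where your sign pattern comes from. (Also, the paper first reduces to $c=1$ via Lemma \ref{reduce} rather than carrying the $p^{-1}\log(c)$ term through; that is a minor difference.)

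The genuine gap is in the step you pass over in one sentence: ``after substituting the Dwork congruence, the left-hand side of $(\star)$ reduces modulo $p^n$ to a multiple of the tail $F-[F]_{<p^n}$.'' That substitution is precisely Asakura's original argument for $\ub=\underline{1}$, and it fails for general $\ub$, for a reason the paper makes explicit: Asakura's method uses the congruence \eqref{C} between the \emph{ratios} $C_m/C^{(1)}_{\lfloor m/p\rfloor}$ as $p$-adic integers, but for $\ub\neq\underline{1}$ this ratio is generically \emph{not} a $p$-adic integer (Lemma \ref{lemma:2} and the Remark after Corollary \ref{lemma:17}), so Dwork's congruence is available only in the cross-multiplied form $C_mC^{(1)}_{\lfloor m'/p\rfloor}\equiv C_{m'}C^{(1)}_{\lfloor m/p\rfloor}$ and cannot be divided through to isolate a coefficient of the tail modulo $p^n$. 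A single application of the cross-multiplied form gains you only one power of $p$, not $p^n$. The paper's actual proof replaces this step by a new mechanism: writing the claim coefficientwise as $S_m=\sum_{i+j=m}(C_{i+p^n}D_j-C_iD_{j+p^n})\equiv 0\pmod{p^n}$, using only the integrality and $p^n$-periodicity of $q_k=D_k/C_k$ (Lemma \ref{lemma:19}, which is your digamma input, together with Lemma \ref{lm}), and then invoking the key Lemma \ref{keylem}: the partial convolution sums $\sum_{i+j=m,\ i\equiv k\ (p^{n-l})}(C_iC_{j+p^n}-C_jC_{i+p^n})$ are divisible by $p^{l+1}$. The proof concludes by a descending chain over residue classes modulo $p^n, p^{n-1},\ldots$, trading one power of $p$ from Lemma \ref{keylem} against one level of refinement of the classes on which $q_k$ is constant, until the full sum $\sum_{i+j=m}(C_iC_{j+p^n}-C_jC_{i+p^n})=0$ appears. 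Nothing in your sketch supplies this extra divisibility on partial sums, and without it (or some substitute) your induction on $n$ cannot close; so the proposal as written establishes the theorem only modulo $p$, not modulo $p^n$.
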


As a corollary of Theorem \ref{main:1}, 
we can prove that $\mathscr{F}^{(\sigma)}_{\ua, \ub}(t)$ is an analytic element in the following sense:
$$\mathscr{F}^{(\sigma)}_{\ua, \ub}(t) \in W \langle t, h_{\ua, \ub}(t)^{-1}\rangle:=\varprojlim_{n \geq 1} (W/p^nW[t, h_{\ua, \ub}(t)^{-1}]), \quad h_{\ua, \ub}(t)=\prod_{i=0}^N[F^{(i)}_{\ua, \ub}(t)]_{<p}$$
for some $N \gg 0$ 
(see Corollary \ref{maincor}). 
 Therefore, for $\alpha \in W$ such that $|h_{\ua, \ub}(\alpha)|_p=1$, we can define the special value at 
 $t=\alpha$ 
 by 
\begin{align} \label{value}
\mathscr{F}_{\ua, \ub}^{(\sigma)}(t)|_{t=\alpha} = \mathscr{F}_{\ua, \ub}^{(\sigma)}(\alpha)=\lim_{n \to \infty}  \left(\left. \dfrac{[G_{\ua, \ub}(t)]_{<p^n}}{[F_{\ua, \ub}(t)]_{<p^n} }\right|_{t=\alpha} \right). 
\end{align}

From now on, let $\sigma$ be the $p$th Frobenius on $W[[t]]$ defined by $\sigma(t)=t^p$. 
In this paper, we consider the special value at $t=1$ for $s=2$. 
For $\ua=(a_1, a_2)$, $\ub=(b)$, we write $\mathscr{F}_{\ua, \ub}^{(\sigma)}(t)$ as $\mathscr{F}_{a_1, a_2; b}^{(\sigma)}(t)$. 
We prove that the special value $\mathscr{F}_{\frac{i}N, \frac{j}N; \frac{k}N}^{(\sigma)}(1)$ is defined under the assumptions $N \mid p-1$ and $i+j \leq k$, and give numerical computations of $\mathscr{F}_{\frac{i}N, \frac{j}N; \frac{k}N}^{(\sigma)}(1) \pmod{p^4}$ for $N=2, 3, 4, 5, 6$ and $p=3, 5, 7, 11, 13$ (see Theorem \ref{main:2}).  

The numerical examples in Table \ref{numerical} suggest the following conjecture. 
\begin{conj} \label{conj:1}
Let $N$ be a positive integer and $p$ be a prime such that $N \mid p-1$. 
For integers $i, j \in \{1, \ldots, N-1\}$ such that $i+j \leq N$, 
we have 
$$\mathscr{F}_{\frac{i}{N}, \frac{j}{N};\frac{i+j}{N}}^{(\sigma)}(1)=0. $$
\end{conj}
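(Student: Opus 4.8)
The plan is to evaluate the defining limit \eqref{value} directly, and the first step is to exploit the hypothesis $N\mid p-1$ (which, since $N\ge 2$, forces $p$ odd, so Theorem \ref{main:1} applies with the clean modulus $p^n$). For $a=c/N$ with $1\le c\le N-1$ the Dwork prime is a fixed point: the unique $l\in\{0,\dots,p-1\}$ with $a+l\equiv 0\pmod p$ is $l=c(p-1)/N$ (an integer because $N\mid p-1$, with $0\le l\le p-1$), and then $a'=(a+l)/p=(cp/N)/p=a$. Hence $a_1^{(i)}=a_1$, $a_2^{(i)}=a_2$ and $b^{(i)}=b$ for all $i\ge 0$, so $F^{(i)}_{\ua,\ub}=F_{\ua,\ub}$ and the whole Dwork tower collapses; in particular $h_{\ua,\ub}(t)=[F_{\ua,\ub}(t)]_{<p}^{\,N+1}$, and the definability of $\mathscr{F}^{(\sigma)}_{\ua,\ub}(1)$ (already part of Theorem \ref{main:2}) is transparent.

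Next I would reduce the vanishing to a statement about the numerator alone. Writing $A_m=(a_1)_m(a_2)_m/((b)_m(1)_m)$, the condition $|h_{\ua,\ub}(1)|_p=1$ means $[F_{\ua,\ub}]_{<p}(1)=\sum_{m<p}A_m\in\Z_p^{\times}$; combined with the Lucas-type Dwork congruence $A_m\equiv\prod_i A_{m_i}\pmod p$ for the base-$p$ digits $m_i$ of $m$, this gives $[F_{\ua,\ub}]_{<p^n}(1)\in\Z_p^{\times}$ for every $n$. Thus the denominator in \eqref{value} has valuation $0$, and by Theorem \ref{main:1} and Corollary \ref{maincor} the limit $\lim_n[G_{\ua,\ub}]_{<p^n}(1)$ exists; the value vanishes if and only if $[G_{\ua,\ub}]_{<p^n}(1)\to 0$. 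With $\sigma(t)=t^p$ one computes
\begin{equation*}
[G_{\ua,\ub}]_{<p^n}(1)=C+L_n,\qquad C=\psi_p(a_1)+\psi_p(a_2)-\psi_p(b)+\gamma_p,
\end{equation*}
where $L_n=\sum_{1\le m<p^n}A_m/m-p^{-1}\sum_{1\le m<p^{n-1}}A_m/m$; isolating the terms with $p\mid m$ and substituting $m=pm'$ rewrites this as $L_n=\sum_{p\nmid m,\,m<p^n}A_m/m+p^{-1}\sum_{m'<p^{n-1}}(A_{pm'}-A_{m'})/m'$, each summand being $p$-integral by Dwork's congruences (this is essentially Lemma \ref{inW}). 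The goal becomes the exact identity $\lim_{n\to\infty}L_n=-C$.

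The structural reason I expect this, and that it is special to $b=a_1+a_2$, is a classical shadow: the balanced $_2F_1(a_1,a_2;a_1+a_2;t)$ is exactly the case with coincident local exponents $\{0,0\}$ at $t=1$, where $A_m\sim\frac{\Gamma(a_1+a_2)}{\Gamma(a_1)\Gamma(a_2)}m^{-1}$, so $F_{\ua,\ub}$ diverges logarithmically as $t\to1$ while its primitive $\sum_{m\ge1}A_mt^m/m$ stays finite — forcing the archimedean analogue of $\mathscr{F}^{(\sigma)}_{\ua,\ub}$ to vanish at $t=1$, and only in the balanced case. Moreover, evaluating $\sum_{m\ge1}A_m/m$ via the $c=a_1+a_2$ connection formula produces precisely a combination of $\psi(a_1),\psi(a_2),\psi(a_1+a_2),\gamma$, the very shape of $C$. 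I would therefore prove the $p$-adic identity by establishing a $p$-adic analogue of this connection-formula evaluation: a regularization of the Frobenius-corrected sum $L_n$ converting it, in the limit, into the digamma combination $-C$. The symmetries forced by $N\mid p-1$ — the reflection $\psi_p(x)=\psi_p(1-x)$ coming from $\Gamma_p(x)\Gamma_p(1-x)\in\{\pm1\}$, together with the Gauss multiplication and Gross–Koblitz relations for $\Gamma_p$ at arguments in $\tfrac1N\Z$ — should pin down the constant and make the two sides agree exactly.

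The main obstacle is precisely this evaluation: $\sum_m A_m/m$ does not converge $p$-adically on its own, so $C$ and the harmonic sum cannot be separated, and the Frobenius correction $-p^{-1}\sum A_m/m$ must be carried along while one identifies the regularized limit with a logarithmic derivative of $\Gamma_p$ inside the Frobenius-stable combination. I would attack this by proving $L_n+C\equiv 0\pmod{p^{n-1}}$ by induction on $n$, using the three-term recurrence for $A_m$ (which, because $b=a_1+a_2$, links $A_m$ to $A_{m-1}$ only through the single factor $a_1+a_2+m-1$) to telescope the defect against the digamma partial sums, and using $A_{pm'}\equiv A_{m'}$ to control the error at each level of the tower. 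A more conceptual alternative, matching the geometry of \cite{As,As3}, is to read $\mathscr{F}^{(\sigma)}_{\ua,\ub}(1)$ as the regulator $\syn$ of the associated hypergeometric fibration specialized at $t=1$; there the resonance $\{0,0\}$ is a semistable, unipotent-monodromy degeneration, and I would argue that the relevant motivic class becomes decomposable, or that its pairing with the unit-root subspace vanishes for the CM-type Frobenius imposed by $N\mid p-1$, so that the regulator, hence the special value, is zero. Either route explains why $k=i+j$ versus $k>i+j$ is exactly the resonant (vanishing) versus non-resonant (generically nonzero) dichotomy seen in Theorem \ref{main:2}.
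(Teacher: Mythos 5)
Your reductions are correct as far as they go: since $N\mid p-1$ the Dwork primes satisfy $a'=a$, the denominator $[F_{\underline{a},\underline{b}}(t)]_{<p^n}|_{t=1}$ is a unit (this is the Gauss-formula computation in the proof of Theorem \ref{main:2}), and your identity $[G_{\underline{a},\underline{b}}]_{<p^n}(1)=C+L_n$ with $C=\psi_p(a_1)+\psi_p(a_2)-\psi_p(b)+\gamma_p$ matches the coefficient formulas used in Lemma \ref{inW}. But at that point the proposal stops being a proof: the assertion $\lim_n L_n=-C$ (or $L_n+C\equiv 0\pmod{p^{n-1}}$) \emph{is} the conjecture, merely rewritten, and neither of your two routes supplies an argument. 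The recurrence $A_m/A_{m-1}=(a_1+m-1)(a_2+m-1)/\bigl((a_1+a_2+m-1)\,m\bigr)$ involves two varying numerator factors, not one, and no mechanism is given for telescoping the Frobenius-corrected harmonic sum against $p$-adic digamma partial sums; the archimedean connection-formula heuristic does not transfer, precisely because $\sum_m A_m/m$ has no $p$-adic meaning at $t=1$ outside the combination $L_n$, which is where the entire difficulty lives. The induction step is announced, not performed.

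The ``conceptual alternative'' is also unsupported and, as stated, inapplicable to the general conjecture: the regulator formula tying $\mathscr{F}^{(\sigma)}$ to a $K_2$-class (Lemma \ref{comp}, via \cite[Theorem 4.8]{As}) exists only for $b=1$, i.e.\ $i+j=N$, and indeed the paper proves \emph{only} that case (Theorem \ref{main:3}), by a genuinely different argument than anything you sketch: one introduces a second Frobenius $\tau(\lambda)=\lambda^p$ adapted to the semistable fiber at $\lambda=1-t=0$, computes the $\tau$-Frobenius matrix on Deligne's canonical extension (Theorem \ref{thm:frob}, normalized by a Poincar\'e residue computation on the special fiber), derives ODEs for the regulator coordinates giving $E^{(i)}_{1,\tau}(0)=0$ (Theorem \ref{de}), and transfers this vanishing to the $\sigma$-value at $t=1$ by Lemma \ref{comp}; the digamma constants enter through $G^{(\tau)}_i(0)=\widetilde{\psi}_p(a_i)+\widetilde{\psi}_p(1-a_i)$ (from \cite{AH}), not through any series identity. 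Your decomposability heuristic is moreover in tension with the data: Theorem \ref{main:2} exhibits nonzero values whenever $k>i+j$, so any conceptual argument must isolate the resonance $k=i+j$ exactly, which yours does not. In short, the proposal is a plausible research plan with a correct first reduction, but the central evaluation is missing, and for $i+j<N$ the statement remains open even in the paper.
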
 
In this paper, we prove Conjecture \ref{conj:1} under a condition, namely: 
\begin{thm} \label{main:3}
If $i+j=N$,  
then Conjecture \ref{conj:1} is true. 
\end{thm}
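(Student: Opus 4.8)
The plan is to reduce Theorem~\ref{main:3} to Asakura's original function and then to extract the vanishing from the reflection symmetry forced by the relation $a_2=1-a_1$. First I would record the two reductions hidden in the hypotheses. Since $i+j=N$, the lower parameter is $b=\tfrac{k}{N}=\tfrac{i+j}{N}=1$, so $\ub=\underline 1$ and, by the identity $\psi_p(1)=-\gamma_p$ of \cite[Theorem 2.6]{As}, the generalized function coincides with Asakura's, $\mathscr{F}^{(\sigma)}_{\frac iN,\frac jN;1}(t)=\mathscr{F}^{(\sigma)}_{\ua}(t)$ with $\ua=(a,1-a)$ and $a=\tfrac iN$. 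Secondly, because $p\equiv 1\pmod N$, for $0<i<N$ the Dwork prime $a'=\tfrac{i'}{N}$ of $a=\tfrac iN$ has numerator $i'\equiv i\,p^{-1}\equiv i\pmod N$ with $0<i'<N$, which forces $i'=i$; hence $a^{(m)}=a$ for all $m$ and $F^{(1)}_{\ua}=F_{\ua}$. With $\sigma(t)=t^p$ (so $c=1$) the logarithmic part collapses to $G_{\ua}(t)=C+\int_0^t\bigl(F_{\ua}(t)-F_{\ua}(t^p)\bigr)\tfrac{dt}{t}$, where $C=\psi_p(a)+\psi_p(1-a)+2\gamma_p$. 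Note that $N\mid p-1$ with $N\geq 2$ forces $p$ odd, so the exceptional case of Theorem~\ref{main:1} never occurs, and that theorem identifies the special value with $\displaystyle\lim_{n}[G_{\ua}]_{<p^n}(1)\big/[F_{\ua}]_{<p^n}(1)$.

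Since the value is defined we have $|h_{\ua,\ub}(1)|_p=1$, so $[F_{\ua}]_{<p^n}(1)\in W^{\times}$ for every $n$; the theorem is therefore equivalent to the congruences $[G_{\ua}]_{<p^n}(1)\equiv 0\pmod{p^{n}}$ for all $n$. Writing $c_m=(a)_m(1-a)_m/(m!)^2$ and $S_n=\sum_{1\le m<p^n}c_m/m$, evaluation of the integral term on the truncation gives $[G_{\ua}]_{<p^n}(1)=C+S_n-p^{-1}S_{n-1}$, so everything reduces to the $p$-adic evaluation of the regularized sum $\lim_n\bigl(S_n-p^{-1}S_{n-1}\bigr)$ and its comparison with the constant $C$.

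The engine is the symmetry $a\leftrightarrow 1-a$. On the one hand the reflection formula $\Gamma_p(x)\Gamma_p(1-x)=\pm 1$ for Morita's $p$-adic Gamma function gives $\psi_p(a)=\psi_p(1-a)$, so $C=2\bigl(\psi_p(a)+\gamma_p\bigr)=2\bigl(\psi_p(a)-\psi_p(1)\bigr)$, which the difference equation for $\psi_p$ rewrites as a regularized harmonic value in $a$; on the other hand the coefficients $c_m$ are themselves invariant under $a\leftrightarrow 1-a$. The heart of the proof is to show that $\lim_n\bigl(S_n-p^{-1}S_{n-1}\bigr)$ exists and equals $-C$. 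I would establish this by differentiating the Dwork unit root $\mathscr{F}^{\mathrm{Dw}}_{\ua}(1)$ with respect to the parameter $a$, using the logarithmically enhanced Dwork congruences packaged in Theorem~\ref{main:1}, and then identifying both $S_n-p^{-1}S_{n-1}$ and $C$, via Gross--Koblitz and Diamond type formulas, with the same combination of $\log\Gamma_p$-values at $a$ and $1-a$. Conceptually this is the statement that the off-diagonal Frobenius entry, equivalently the syntomic regulator, attached to the unipotent fibre at $t=1$ is anti-invariant under the involution induced by $a\leftrightarrow 1-a$, and hence vanishes.

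The main obstacle is precisely this last identification: proving to $p^{n}$-accuracy that the regularized partial sums cancel the digamma constant. This is a genuine evaluation and not a formal cancellation, because $C\neq 0$ in general (already $\psi_p(\tfrac12)\neq\psi_p(1)$). I expect it to rest on Kummer-type congruences relating $\sum_m c_m/m$ to $p$-adic $L$-values, combined with the antisymmetry under $i\mapsto N-i$ that the relation $a_2=1-a_1$ encodes.
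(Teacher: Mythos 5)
Your opening reductions are correct and match the setup of the paper: when $i+j=N$ the lower parameter is $b=1$, so the function is Asakura's $\mathscr{F}^{(\sigma)}_{\ua}(t)$ with $\ua=(a,1-a)$, $a=i/N$; since $N\mid p-1$ the Dwork prime fixes $a$, so $F^{(1)}_{\ua}=F_{\ua}$; and because $[F_{\ua}(t)]_{<p^n}|_{t=1}$ is a unit, the theorem is indeed equivalent to $[G_{\ua}(t)]_{<p^n}|_{t=1}\equiv 0\pmod{p^n}$, i.e.\ to the evaluation $\lim_n\bigl(S_n-p^{-1}S_{n-1}\bigr)=-C$ with $C=\widetilde{\psi}_p(a)+\widetilde{\psi}_p(1-a)$. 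But at exactly this point the proposal stops being a proof. The symmetry $a\leftrightarrow 1-a$ cannot supply the cancellation: the coefficients $c_m=(a)_m(1-a)_m/(m!)^2$, hence every $S_n$, and the constant $C$ are all \emph{invariant} under this involution, so there is no anti-invariant quantity in sight that is forced to vanish; your concluding remark that the off-diagonal Frobenius entry ``is anti-invariant \ldots and hence vanishes'' is an assertion with no mechanism behind it. The remaining suggestions (differentiating $\mathscr{F}^{\rm Dw}_{\ua}(1)$ in the parameter $a$, Gross--Koblitz, Diamond formulas, Kummer congruences) are named but not executed, and you concede yourself that the identification is ``a genuine evaluation and not a formal cancellation.'' So the heart of the theorem is missing.

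For comparison, the paper obtains the vanishing geometrically rather than by evaluating the regularized sum. It realizes $\mathscr{F}^{(\sigma)}_{a_i,1-a_i;1}(t)$ as a component of the syntomic regulator of the Milnor symbol $\xi$ on the hypergeometric curve $(1-x^N)(1-y^N)=t$ with respect to the Frobenius $\sigma(t)=t^p$ (Asakura's \cite[Theorem 4.8]{As}), then computes the same regulator with respect to a second Frobenius $\tau$ with $\tau(\lm)=\lm^p$, $\lm=1-t$, adapted to the semistable fiber at $t=1$. The Frobenius matrix on Deligne's canonical extension at $\lm=0$ is determined in Theorem \ref{thm:frob} via log-crystalline cohomology, with the constant term $G^{(\tau)}_i(0)=\widetilde{\psi}_p(a_i)+\widetilde{\psi}_p(1-a_i)$ --- precisely your constant $C$ --- pinned down by \cite[Theorem 5.4]{AH}, and the scalar $C_i=(-1)^{(pj-i)/N}$ fixed by the Poincar\'e residue map onto the intersection locus of the special fiber. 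The differential equations of Theorem \ref{de} then show $E^{(i)}_{1,\tau}(\lm)\in\lm K[[\lm]]$, hence $\e^{(i)}_{1,\tau}$ and $\e^{(i)}_{2,\tau}$ vanish at $\lm=0$, and Lemma \ref{comp} transfers this to $\mathscr{F}^{(\sigma)}_{a_i,1-a_i;1}(1)=\e^{(i)}_{1,\tau}(0)=0$. In other words, the $p$-adic input you hoped to extract from Gross--Koblitz-type formulas is exactly what the Frobenius structure at the degenerate fiber encodes, and the vanishing comes from regularity of the extension datum at $\lm=0$ (a residue computation), not from any symmetry of the coefficients. To salvage your outline you would have to prove the congruence $S_n-p^{-1}S_{n-1}\equiv -C\pmod{p^n}$ directly, which is an open-ended computation that the proposal does not begin.
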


We briefly give a sketch of the proofs of Theorems \ref{main:1} and \ref{main:3}.  
The proof of Theorem \ref{main:1} is as follows. 
First, we reduce it to the case $c=1$ (see Section \ref{reduce1}). 
Write $F_{\ua, \ub}(t)=\sum C_nt^n$, $F^{(1)}_{\ua, \ub}(t)=\sum C_n^{(1)}t^n$ and $G_{\ua, \ub}(t)=\sum D_nt^n$. 
It suffices to show that for any $m, n \in \Z_{\geq 0}$, 
\begin{align}
\sum_{i+j=m}(C_{i+p^n}D_j-C_iD_{j+p^n}) \equiv 0 \pmod{p^n}. \label{maincong} 
\end{align}
When $\ub=\underline{1}$, this is proved by using the congruence relations of $p$-adic integers 
 \begin{align}
\dfrac{C_{m_1}}{C^{(1)}_{\lfloor m_1/p \rfloor}} &\equiv \dfrac{C_{m_2}}{C^{(1)}_{\lfloor m_2/p \rfloor}} \pmod{p^n},  \label{C} \\
\dfrac{D_{m_1}}{C_{m_1}} &\equiv \dfrac{D_{m_2}}{C_{m_2}} \pmod{p^n} \label{D}, 
\end{align}
where $m_1 \equiv m_2 \pmod{p^n}$ (see \cite[Section 3]{As}).  
For general $\ub$, although we can prove \eqref{D} (see Lemma \ref{lemma:19}), $C_m/C^{(1)}_{\lfloor m/p \rfloor}$ is not a $p$-adic integer in general (see Lemma \ref{lemma:2}), hence we cannot use the same method as in loc. cit. for our proof. 
To avoid this problem, we need a different method using Lemma \ref{keylem}, which is a slight modification of the key lemma \cite[Lemma 3.12]{As}, proving \eqref{maincong} without using the congruence relation \eqref{C}.  
The method used in this paper can be applied to the case where $\ub=\underline{1}$, making it easier to prove the congruence relations than the original proof \cite[Section 3]{As}.  

The proof of Theorem \ref{main:3} is as follows.  
We consider a fibration $f \colon Y \to \mathbb{P}^1_W$ over $W$, where the affine model of the general fiber $f^{-1}(t)$ is given by
$$(1-x^N)(1-y^N)=t, $$
which is called the \textit{hypergeometric curve} introduced by Asakura and Otsubo \cite{AO2}. 
Put 
$S:=\Spec W[t, (t-t^2)^{-1}]$ and $X:=f^{-1}(S)$. 
Let $\xi$ be a Milonor $K_2$-symbol of $X$ defined in \eqref{xi}. 
Let $\sigma$ be the $p$th Frobenius on $W[t, (t-t^2)^{-1}]^{\dagger}$ (the ring of overconvergent power series) defined by $\sigma(t)=t^p$. 
Then Asakura \cite[Theorem 4.8]{As} proves that the image of $\xi$ under the $p$-adic regulator induced by $\sigma$ is expressed in terms of $\mathscr{F}_{\ua}^{(\sigma)}(t)$. 
On the other hand, we put $\lm=1-t$ and let $\tau$ be another $p$th Frobenius on $W[t, (t-t^2)^{-1}]^{\dagger}$ defined by $\tau(\lm)=\lm^{p}$. 
The key step of our proof is explicitly expressing the image of $\xi$ under the $p$-adic regulator induced by $\tau$ (see Theorem \ref{de}). 
By comparing the $p$-adic regulators using Lemma \ref{comp}, we obtain Theorem \ref{main:3}.  

The hypergeometric curve specializes at $t=1$ to the Fermat curve of degree $N$
$$X_N : x^N+y^N=1. $$
Let $K$ be the fractional field of $W$ and put $X_{N, K}:=X_N \times_{W} K$.  
In the category of pure motives over $K$ with $K$-coefficients, $h^1(X_{N, K})$ is decomposed into the direct sum of rank $1$ motives $X_N^{a, b}$.  
When $i + j < N$, the special value $\mathscr{F}^{(\sigma)}_{\frac{i}N, \frac{j}N; 1}(1)$ appears in the $p$-adic regulator of the motive $X_N^{i, j}$ (see \cite[Theorem 4.11]{As}), and conjecturally, does not vanish (see \cite[Conjecture 4.17]{As}).  
Theorem \ref{main:3} seems to be related to the fact that the motive $X_N^{i, N-i}$ is trivial. 
Similarly, we expect geometric interpretations of our function  $\mathscr{F}_{\ua, \ub}^{(\sigma)}(t)$ as follows: 
There is a so-called generalized Fermat curve (cf. \cite{Nova}) 
\begin{align} \label{generalFermat}
\left\{
\begin{array}{ll}
x^N+y^N=1  \\
x^N+z^N=t
\end{array}
\right.
\end{align}
whose complex periods are described by $F_{a_1, a_2; b}(t)$ for $b$ possibly different from $1$. Similar to the  case $b=1$ due to Asakura, we expect that the $p$-adic regulators of \eqref{generalFermat} are written 
in terms of the special values of our function $\mathscr{F}_{a_1, a_2; b}^{(\sigma)}(t)$. 
Also, we expect to prove Conjecture \ref{conj:1} by using these geometric interpretations.

This paper is constructed as follows. 
In Section \ref{Dwork_pHG}, we recall the Dwork $p$-adic hypergeometric function and its congruence relations.  
In Section \ref{log_pHG}, we recall the $p$-adic hypergeometric function of logarithmic type $\mathscr{F}_{\ua}^{(\sigma)}(t)$ and its congruence relations. 
 After that, we give the definition of the generalized $p$-adic hypergeometric function $\mathscr{F}^{(\sigma)}_{\ua, \ub}(t)$.  
In Section \ref{main_proof}, we give a proof of Theorem \ref{main:1}, i.e. the congruence relations of $\mathscr{F}^{(\sigma)}_{\ua, \ub}(t)$.  
In Section \ref{special_value}, we prove Theorems \ref{main:2} and \ref{main:3}. 
To prove Theorem \ref{main:3}, we also recall the relation between $p$-adic regulators of hypergeometric curves and $p$-adic hypergeometric functions of logarithmic type.

\subsection{Notations}
For a field $K$ and a positive integer $N$, let $\mu_N \subset K^*$ be the group of  $N$th roots of unity. 
For a power series $f(t)=\sum a_nt^n$, we write $f(t)_{<m}= \sum_{n<m}a_nt^n$ the truncated polynomial.

\section{Dwork's $p$-adic hypergeometric function} \label{Dwork_pHG}
In this section, we recall $p$-adic hypergeometric functions defined by Dwork for $\ub=\underline{1}$ in \cite{Dw} and for general $\ub$ in \cite{Dw2}. 

For $\alpha \in \Z_p$, let $\alpha'$ be the Dwork prime defined by $\alpha'=(\alpha+l)/p$, where $l \in \{0, 1, \ldots, p-1\}$ is the unique integer such that $\alpha+l \equiv 0 \pmod{p}$. To put it another way, if $p$-adic integer $\alpha$ has the $p$-adic expansion 
$$\alpha=-[\alpha]_0-[\alpha]_1p-[\alpha]_2p^2-\cdots -[\alpha]_np^n-\cdots$$
where $[\alpha]_i \in \{0,1, \cdots, p-1\}$, then $\alpha^{\prime}$ is defined by
$$\alpha^{\prime}=-[\alpha]_1-[\alpha]_2p-[\alpha]_3p^2-\cdots -[\alpha]_{n-1}p^n-\cdots.$$
Let $a^{(i)}$ be the $i$th Dwork prime defined by $a^{(i)}=(a^{(i-1)})^{\prime}$ with $a^{(0)}=a$. 
Put 
$$F_{\ua, \ub}(t)={_{s}F_{s-1}}\left( 
\begin{matrix}
a_1, \cdots, a_s \\
b_1, \ldots b_{s-1}
\end{matrix}
; t
\right), \quad F^{(i)}_{\ua, \ub}(t)={_{s}F_{s-1}}\left( 
\begin{matrix}
a_1^{(i)}, \cdots, a_s^{(i)} \\
b_1^{(i)}, \ldots b_{s-1}^{(i)}
\end{matrix}
; t
\right). $$
We drop $\ub$ from the notation when $\ub = \underline{1}$. 
Then Dwork \cite{Dw2} proves the following theorem.

\begin{thm}[{\cite[Theorem 1.1]{Dw2}}, cf. {\cite[Theorem 2.3]{Young}}]\label{theorem:1}
Let $\ua=(a_1, \ldots, a_s) \in (\mathbb{Q} \cap \mathbb{Z}_p)^s$, $\ub=(b_1, \ldots, b_{s-1}) \in (\mathbb{Q} \cap \mathbb{Z}_p\backslash \mathbb{Z}_{\leq 0})^{s-1}$ such that $b_j \neq 1$ for $1 \leq j \leq q$ and $b_j=1$ for $q < j \leq s-1$. Assume that the following conditions are satisfied: 
\begin{enumerate}
\item $|b^{(i)}_j|_p=1$ for all $i \geq 0$, $j=1,\ldots, q$,  
\item For each fixed $n$, supposing the indices are rearranged so that $[a_{1}]_n \leq \cdots \leq [a_{s}]_n$ and $[b_1]_n \leq \cdots \leq [b_{s-1}]_n$, 
we have 
$$  [a_{j+1}]_n < [b_j]_n \quad (j=1, \ldots, q).$$
\end{enumerate}
\noindent Then for any $i \geq 0$, we have $\displaystyle F^{(i)}_{\ua, \ub}(t) \in \mathbb{Z}_p[[t]]$ and the congruence relations such that
$$F_{\ua, \ub}(t) [F^{(1)}_{\ua, \ub}(t^p)]_{<p^n} \equiv F^{(1)}_{\ua, \ub}(t^p) [F_{\ua, \ub}(t)]_{<p^n}    \pmod{p^{l} \mathbb{Z}_p[t]} \quad  (n \geq l). $$
\end{thm}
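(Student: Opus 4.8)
The plan is to prove the two assertions — integrality of every $F^{(i)}_{\ua,\ub}$ and the cross-multiplied congruence — separately, with a single combinatorial input: the Dwork-prime factorization of Pochhammer symbols. Writing $C_n=\prod_i(a_i)_n\big/\big(\prod_j(b_j)_n\,(1)_n\big)$ for the coefficients, the starting point is that for $c\in\Z_p$ and $n=pq+r$ with $0\le r<p$ one has
$$(c)_n=p^{N(c,n)}\,(c')_{N(c,n)}\,U(c,n),\qquad N(c,n)=q+\1[\,[c]_0<r\,],$$
where $U(c,n)=\prod_{0\le k<n,\ p\nmid c+k}(c+k)\in\Z_p^\times$ and $c'$ is the Dwork prime; this is immediate from grouping the factors $c+k$ by their residue mod $p$ and using $(c+[c]_0)/p=c'$. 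Because passing to the Dwork prime shifts the $p$-adic digits, $[c']_m=[c]_{m+1}$, conditions (i) and (ii) are preserved under $(\ua,\ub)\mapsto(\ua',\ub')$; hence it suffices to prove integrality for $F_{\ua,\ub}$ itself, and the statement for all $F^{(i)}_{\ua,\ub}$ follows by applying the result to each Dwork prime.

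For integrality I would feed the factorization into $C_n$. Since there are $s$ Pochhammer symbols upstairs and $s$ downstairs ($s-1$ from the $b_j$ plus one from $(1)_n$), the bulk terms $q$ in the exponents cancel and the first-level $p$-exponent of $C_n$ equals
$$\varepsilon(r)=\#\{i:[a_i]_0<r\}-\#\{j:[b_j]_0<r\}$$
(the factor $(1)_n$ and the trivial $b_j=1$ contribute nothing, since their zeroth digit is $p-1$). Condition (ii) at level $0$ says exactly that, after sorting, $[a_{j+1}]_0<[b_j]_0$; a one-line counting argument then gives $\varepsilon(r)\ge0$ for every $r$. Iterating the factorization on the surviving Pochhammer symbols $(a_i')_{N(a_i,n)}$, etc., and using that condition (ii) holds at every digit level, I obtain $v_p(C_n)\ge0$ by a finite descent on the digit levels; condition (i) guarantees that the denominator parameters $b_j^{(m)}$ stay units throughout, so no $p$ is ever introduced in a denominator. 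This gives $F_{\ua,\ub}(t)\in\Z_p[[t]]$.

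For the congruence I would rewrite $C_n$ using Morita's $p$-adic Gamma function $\Gamma_p$, via $(c)_n=(-1)^n\Gamma_p(c+n)/\Gamma_p(c)\cdot p^{v_p((c)_n)}$. As the numerator and denominator of $C_n$ carry the same number of signs $(-1)^n$, these cancel and $C_n=p^{v_p(C_n)}\widehat C_n$ with
$$\widehat C_n=\Big(\textstyle\prod_j\Gamma_p(b_j)\,\Gamma_p(1)\big/\prod_i\Gamma_p(a_i)\Big)\cdot\frac{\prod_i\Gamma_p(a_i+n)}{\prod_j\Gamma_p(b_j+n)\,\Gamma_p(1+n)}\in\Z_p^\times,$$
a continuous, unit-valued function of $n\in\Z_p$; the Lipschitz bound $|\Gamma_p(x)-\Gamma_p(y)|_p\le|x-y|_p$ gives $\widehat C_n\equiv\widehat C_{n'}\pmod{p^k}$ whenever $n\equiv n'\pmod{p^k}$, and likewise for the coefficients $\widehat C^{(1)}_n$ of $F^{(1)}_{\ua,\ub}$. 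Combining this continuity with the factorization $C_{pq+r}=p^{\varepsilon(r)}(\text{unit})\,\prod_i(a_i'+q)^{\delta_{a_i}}/\prod_j(b_j'+q)^{\delta_{b_j}}\cdot C^{(1)}_q$, where $\delta_c=\1[[c]_0<r]$, yields the coefficient-level Dwork congruence
$$C_n\,C^{(1)}_{\lfloor n'/p\rfloor}\equiv C_{n'}\,C^{(1)}_{\lfloor n/p\rfloor}\pmod{p^k}\qquad(n\equiv n'\bmod p^k).$$
Finally, setting $W_{p^n}=F_{\ua,\ub}(t)\,[F^{(1)}_{\ua,\ub}(t^p)]_{<p^n}-F^{(1)}_{\ua,\ub}(t^p)\,[F_{\ua,\ub}(t)]_{<p^n}$, a direct expansion shows $W_{p^n}$ vanishes in degrees $<p^n$ and that each higher coefficient is an alternating sum $\sum_m C^{(1)}_m C_{d-pm}\big(\1(m<p^{n-1})-\1(d-pm<p^n)\big)$; matching a term to its shift by $p^n$ in the degree and invoking the coefficient congruence makes the paired contributions agree modulo $p^l$ once $n\ge l$, while the boundary terms cancel. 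Cross-multiplying is the content of the theorem.

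The main obstacle is the last step: making the pairing in $W_{p^n}$ precise. The alternating sum does not split into matched pairs in an entirely symmetric way — the two indicator conditions are governed by the quotient index $\lfloor(d-pm)/p\rfloor$ rather than by $m$ itself — so the cancellation has to be organized through an induction on $n$ that simultaneously tracks the quotient indices and the exact $p$-powers $p^{v_p(C_\bullet)}$ produced by the valuation jumps. Controlling these valuations (again using condition (ii) at every level to keep $\varepsilon\ge0$), so that the continuity of $\widehat C$ is not overwhelmed by the $p$-power prefactors, is the delicate technical heart of the argument; everything else is bookkeeping with the factorization.
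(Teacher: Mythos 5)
First, a point of orientation: the paper itself gives no proof of this theorem --- it is quoted from Dwork \cite{Dw2} (cf.\ Young \cite{Young}) --- so your attempt has to stand against those classical arguments, whose opening moves you reproduce correctly: the factorization $(c)_n=p^{N(c,n)}(c')_{N(c,n)}U(c,n)$, the digit shift $[c']_m=[c]_{m+1}$ (which makes conditions (i)--(ii) stable under priming and reduces integrality of all $F^{(i)}_{\ua,\ub}$ to the case $i=0$), and the level-zero count $\varepsilon(r)\ge 0$ extracted from the interlacing hypothesis (ii) are all sound. The first genuine gap is in your integrality descent. The justification ``condition (i) guarantees that the denominator parameters stay units, so no $p$ is ever introduced in a denominator'' is not valid: $|b_j^{(m)}|_p=1$ in no way prevents $v_p(b_j^{(m)}+q)$ from being large, and your own recursion $C_{pq+r}=p^{\varepsilon(r)}(\mathrm{unit})\,\pi(q)\,C^{(1)}_q$ with $\pi(q)=\prod_i(a_i'+q)^{\delta_{a_i}}\big/\prod_j(b_j'+q)^{\delta_{b_j}}$ exhibits exactly such denominators --- this is the point of the Remark following Corollary \ref{lemma:17}, that $C_m/C^{(1)}_{\lfloor m/p\rfloor}$ is generically \emph{not} a $p$-adic integer. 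Those denominator $p$'s must be beaten by the counting inequality at the next digit level; but there the truncation lengths $q+\delta_{a_i}$ and $q+\delta_{b_j}$ have desynchronized by the accumulated indicators, so the digits $[a_i]_1$, $[b_j]_1$ are no longer compared against one common residue $r$, and the one-line sorted count no longer applies verbatim. Controlling this off-by-one drift through all digit levels is precisely the inductive content of Dwork's proof, and it is absent from yours.

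The second gap concerns the $\Gamma_p$ step and, above all, the endgame. The identity $C_n=p^{v_p(C_n)}\widehat C_n$ with your displayed one-level Gamma ratio is false in general: since $U(c,n)=(-1)^n\Gamma_p(c+n)/\Gamma_p(c)$, your $\widehat C_n$ is only the level-zero unit factor $\prod_iU(a_i,n)\big/\bigl(\prod_jU(b_j,n)\,U(1,n)\bigr)$, whereas the full unit part of $C_n$ carries such Gamma factors at \emph{every} Dwork-prime level, with lengths $N_m$ that differ between numerator and denominator parameters, so neither the sign cancellation nor the Lipschitz estimate combines as claimed. (The coefficient congruence $C_mC^{(1)}_{\lfloor m'/p\rfloor}\equiv C_{m'}C^{(1)}_{\lfloor m/p\rfloor}$ of Corollary \ref{lemma:17} is nevertheless true and elementarily provable, but via the all-unit products $\{\alpha\}_m$ of Lemma \ref{lemma:2}, which keep everything in cleared-denominator form; your argument applied to the non-integral $\pi(q)$ loses powers of $p$ exactly where $b_j'+q$ is highly divisible.) Most seriously, you concede yourself that the final step --- organizing the alternating sum in $W_{p^n}$ into matched pairs --- is not carried out; but that step \emph{is} the theorem. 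The coefficient congruence alone does not yield the power-series congruence: even in the case $\ub=\underline 1$, where all $C_n\in\Z_p$, this passage is the substance of the key lemma adapted in Lemma \ref{keylem}, and in the general case it is Dwork's double induction on the quotient indices. As written, your proposal establishes (modulo the descent gap) only coefficient-level statements that the paper already derives as corollaries of the theorem, and not the asserted congruence $F_{\ua,\ub}(t)\,[F^{(1)}_{\ua,\ub}(t^p)]_{<p^n}\equiv F^{(1)}_{\ua,\ub}(t^p)\,[F_{\ua,\ub}(t)]_{<p^n}\pmod{p^l}$.
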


Dwork defines the $p$-adic hypergeometric function by a ratio of hypergeometric series  
\begin{align*}
\mathscr{F}_{\ua, \ub}^{\rm Dw}(t)=\dfrac{F_{\ua, \ub}(t)}{F^{(1)}_{\ua, \ub}(t^p)}. 
\end{align*}
By Theorem \ref{theorem:1}, we have the congruence relations
\begin{align*}
\mathscr{F}_{\ua, \ub}^{\rm Dw}(t) \equiv \dfrac{[F_{\ua, \ub}(t)]_{<p^n}}{[F^{(1)}_{\ua, \ub}(t^p)]_{<p^n}} \pmod{p^n\mathbb{Z}_p[[t]]}. \end{align*}
Therefore, this function is $p$-adically analytic in the sense of Krasner, which means that this function defines an element of the Tate algebra (cf. \cite[Corollary 2.3]{As2}). 

\section{$p$-adic hypergeometric functions of logarithmic type} \label{log_pHG}
In the paper \cite{As}, Asakura defines another $p$-adic hypergeometric function for $\ub=\underline{1}$, which is called the $p$-adic hypergeometric function of logarithmic type, and proves congruence relations similar to Dwork's. 
In this section, we briefly recall his function and the congruence relations. 
After that, we define a $p$-adic hypergeometric function of logarithmic type for possibly different from $\underline{1}$.

\begin{dfn} \label{definition:3}
For $z \in \mathbb{Z}_p$, we define 
$$\widetilde{\psi}_p(z) = \displaystyle \lim_{n \in \mathbb{Z}_{>0}, n \rightarrow z} \sum _{1 \leq k <n, p \nmid  k}\dfrac{1}{k}.$$
Define the \textit{$p$-adic Euler constant} by 
$$\gamma_p=\displaystyle \lim_{s \to \infty}\dfrac{1}{p^s} \sum_{0 \leq j<p^s, p \nmid j }\log(j),$$
where $\log \colon \C^*_p \to \C_p$ is the \textit{Iwasawa logarithmic function}, which is characterized as a continuous homomorphism satisfying $\log(p)=0$ and 
$$\log(z)=-\displaystyle \sum_{n=1}^{\infty} \dfrac{(1-z)^n}{n}, \quad |z-1|_p<1.$$ 
We define the \textit{$p$-adic digamma function} by
$$\psi_p(z)=-\gamma_p +\widetilde{\psi}_p(z).$$
\end{dfn}
If $z \equiv z^{\prime} \pmod{p^s}$, then for $p \geq 3$ or $p=2$, $s \geq 2$, we have 
\begin{align}
\widetilde{\psi}_p(z)-\widetilde{\psi}_p(z^{\prime}) \equiv 0 \pmod {p^s}  \label{eq:6}
\end{align}
(see \cite[Lemma 2.5 and (2.13)]{As}), 
hence this function is a $p$-adic continuous function on $\mathbb{Z}_p$.

Let $\sigma$ be a $p$th Frobenius on $W[[t]]$ defined by $\sigma(t)=ct^p$ ($c \in 1+pW$), i.e.
$$\left(\sum_i a_i t^i \right)^{\sigma}=\sum_i a_i^Fc^it^{ip}, $$
where $F$ is the Frobenius on $W$.  
Put 
$$G_{\ua}(t)= \sum_{i=1}^{s}\psi_p(a_i)+ s\gamma_p-p^{-1} \log(c) +\int_0^t (F_{\ua}(t)-F^{(1)}_{\ua}(t^{\sigma}))\dfrac{dt}{t}, $$
where $\int_0^t(-)\frac{dt}{t}$ is an operator which sends a power series $\sum_{n=1}^{\infty} a_n t^n$ to $\sum_{n=1}^{\infty} \frac{a_n}{n} t^n$. 
Then Asakura \cite{As} defines the \textit{$p$-adic hypergeometric function of logarithmic type} by 
$$\mathscr{F}_{\ua}^{(\sigma)}(t)=\dfrac{G_{\ua}(t)}{F_{\ua}(t)} \in W[[t]]$$
and proves the following congruence relations similar to Dwork's.    
\begin{thm}[{\cite[Theorem 3.2]{As}}]
Suppose that $a_i \not \in \mathbb{Z}_{\leq 0}$ for all $i$. If $p$ is odd, then for all $n \geq 1$, we have
\begin{align*}
\mathscr{F}_{\ua}^{(\sigma)}(t) \equiv \dfrac{[G_{\ua}(t)]_{<p^n}}{[F_{\ua}(t)]_{<p^n}} \pmod{p^n{W}[[t]]}.   
\end{align*}
If $p=2$, then the conguruence above holds modulo $p^{n-1}W[[t]]$.
\end{thm}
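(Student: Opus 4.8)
The plan is to reduce the congruence to a single family of relations among Taylor coefficients and to prove these by a Dwork-style descent. First I would reduce to the case $c=1$, so that $\sigma(t)=t^p$ (see Section \ref{reduce1}). Recall that $\mathscr{F}_{\ua}^{(\sigma)}(t)=G_{\ua}(t)/F_{\ua}(t)$ lies in $W[[t]]$, so both sides of the asserted congruence lie in $W[[t]]$. Write $F_{\ua}(t)=\sum C_nt^n$, $F^{(1)}_{\ua}(t)=\sum C^{(1)}_nt^n$ and $G_{\ua}(t)=\sum D_nt^n$; since $C_0=1$, both $F_{\ua}(t)$ and the truncation $[F_{\ua}(t)]_{<p^n}$ are units in $W[[t]]$. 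From the identity
\begin{align*}
\frac{G_{\ua}(t)}{F_{\ua}(t)}-\frac{[G_{\ua}(t)]_{<p^n}}{[F_{\ua}(t)]_{<p^n}}=\frac{G_{\ua}(t)\,[F_{\ua}(t)]_{<p^n}-F_{\ua}(t)\,[G_{\ua}(t)]_{<p^n}}{F_{\ua}(t)\,[F_{\ua}(t)]_{<p^n}}
\end{align*}
and the invertibility of the denominator, the claim is equivalent to the congruence $G_{\ua}(t)\,[F_{\ua}(t)]_{<p^n}\equiv F_{\ua}(t)\,[G_{\ua}(t)]_{<p^n}\pmod{p^nW[[t]]}$. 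Comparing the coefficient of $t^{m+p^n}$, in which the contributions with both summation indices $<p^n$, or both $\ge p^n$, cancel, shows that this is in turn equivalent to the coefficient congruence \eqref{maincong}, namely $\sum_{i+j=m}\bigl(C_{i+p^n}D_j-C_iD_{j+p^n}\bigr)\equiv 0\pmod{p^n}$ for all $m,n\ge 0$.

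Next I would isolate the two arithmetic inputs. Differentiating $G_{\ua}(t)$ yields $jD_j=C_j-C^{(1)}_{j/p}$ when $p\mid j$ and $jD_j=C_j$ otherwise, while $D_0=\sum_i\psi_p(a_i)+s\gamma_p$; in the present case $\ub=\underline{1}$ the coefficients $C_n=\prod_i(a_i)_n/(n!)^s$ lie in $\Z_p$, and this integrality is what makes the argument below go through. The first input is the Dwork-type congruence \eqref{C}, that $C_m/C^{(1)}_{\lfloor m/p\rfloor}$ depends only on $m\bmod p^n$ modulo $p^n$; I would deduce it from Theorem \ref{theorem:1} together with the explicit shape of the Pochhammer ratios. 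The second, more delicate, input is \eqref{D}, that $D_m/C_m$ depends only on $m\bmod p^n$ modulo $p^n$; here one writes $D_m/C_m$ in terms of partial sums of the form $\sum_{1\le k<m,\,p\nmid k}1/k$ and invokes the continuity \eqref{eq:6} of the $p$-adic digamma function. This is the step that forces the particular normalization $\sum_i\psi_p(a_i)+s\gamma_p$ of the constant term of $G_{\ua}(t)$, ensuring that the value $D_0/C_0$ also fits the pattern.

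With \eqref{C} and \eqref{D} in hand, I would prove \eqref{maincong} by induction on $n$, via a descent that passes from $\ua$ at level $n$ to its Dwork prime $\ua^{(1)}$ at level $n-1$. The mechanism is that \eqref{C}, in the form $C_{m+p^n}/C^{(1)}_{\lfloor m/p\rfloor+p^{n-1}}\equiv C_m/C^{(1)}_{\lfloor m/p\rfloor}\pmod{p^n}$, rewrites each product $C_{i+p^n}C_j-C_iC_{j+p^n}$ in terms of the analogous product for $C^{(1)}$ with indices $\lfloor i/p\rfloor$ and $\lfloor j/p\rfloor$; regrouping the sum $i+j=m$ according to the residues of $i$ and $j$ modulo $p$, and using \eqref{D} to control the $D$-coefficients, then reproduces the level-$(n-1)$ statement for $\ua^{(1)}$, the base case $n=0$ being immediate, all coefficients being $p$-adic integers. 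I expect the main obstacle to be exactly the combination of \eqref{C} and \eqref{D}: the coefficients $D_j$ carry denominators, so the two congruences must be applied in an order that never multiplies a congruence modulo $p^n$ by a negative power of $p$, the integrality $C_n\in\Z_p$ being used at each stage to absorb the errors. Packaging this cancellation is the role of Asakura's key lemma \cite[Lemma 3.12]{As}, a careful rearrangement of the double sum.

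Finally, the $p=2$ case needs care at one point only. The single lost power of $p$ comes entirely from the continuity \eqref{eq:6} of the $p$-adic digamma function, which for $p=2$ is one power of $p$ weaker than for odd $p$; tracking this weaker estimate through the proof of \eqref{D} and through the descent yields the asserted congruence modulo $p^{n-1}$ rather than $p^n$. I would also remark that the integrality $C_n\in\Z_p$ is special to $\ub=\underline{1}$: for general $\ub$ the ratios $C_m/C^{(1)}_{\lfloor m/p\rfloor}$ need not be $p$-adic integers, so the absorption step breaks down and a genuinely denominator-free rearrangement is required.
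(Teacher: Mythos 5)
Your proposal is, in essence, a reconstruction of Asakura's original argument in \cite[Section 3]{As}, and for $\ub=\underline{1}$ it is sound: there the ratios $C_m/C^{(1)}_{\lfloor m/p\rfloor}$ are $p$-adic integers, so combining the Dwork congruence \eqref{C} with the digamma congruence \eqref{D} in a descent through Dwork primes is legitimate, and you correctly locate the single lost power of $p$ at $p=2$ in the weaker continuity \eqref{eq:6} of $\widetilde{\psi}_p$. However, this is genuinely different from the route taken in the present paper, whose method (developed in Section \ref{main_proof} for general $\ub$ and, as stressed in the introduction, applicable verbatim to $\ub=\underline{1}$, where it is simpler than the original proof) avoids \eqref{C} entirely: after reducing to $c=1$ (Lemma \ref{reduce}) and to the coefficient congruence \eqref{maincong}, Lemma \ref{lm} uses only the periodicity of $q_k=D_k/C_k$ (Lemma \ref{lemma:19}) to rewrite $S_m$ as $\sum_{i+j=m}(C_{i+p^n}C_j-C_iC_{j+p^n})\,q_j$, and then the modified key lemma (Lemma \ref{keylem}) --- partial sums of this antisymmetric expression over residue classes $j\equiv k \bmod p^{n-l}$ vanish modulo $p^{l+1}$ --- is played off against the mod-$p^{n-l}$ constancy of $q_k$ to coarsen the grouping one level at a time, gaining one power of $p$ per step, until one reaches the full antisymmetric sum, which vanishes identically; no induction on Dwork primes and no integrality of $C_m/C^{(1)}_{\lfloor m/p\rfloor}$ is ever invoked. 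What each approach buys: yours is the historical proof and works here precisely because of the integrality special to $\ub=\underline{1}$ (a limitation you flag yourself), while the paper's rearrangement is denominator-free and therefore extends to $\ub\neq\underline{1}$, which is the whole point of Theorem \ref{main:1}. One small imprecision in your reduction step: the coefficient of $t^{m+p^n}$ in $G_{\ua}(t)[F_{\ua}(t)]_{<p^n}-F_{\ua}(t)[G_{\ua}(t)]_{<p^n}$ is a sum restricted to indices $j<p^n$ and does not literally equal the left-hand side of \eqref{maincong}; the equivalence of the two families of congruences requires a short additional telescoping argument over $n$, as in loc.\ cit., rather than a one-line coefficient comparison.
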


We generalize the above function to a function with a general parameter $\ub$ possibly different from $\underline{1}$. 
\begin{dfn}
Let $\ua \in (\Q \cap \Z_p)^s$ (resp. $\ub \in (\Q \cap \Z_p \backslash \Z_{\leq 0})^{s-1})$ be a $s$-tuple (resp. ($s-1$)-tuple) satisfying the conditions (i) and (ii) in Theorem \ref{theorem:1}.  
Put 
$$G_{\ua, \ub}(t)= \sum_{i=1}^{s}\psi_p(a_i)-\sum_{i=1}^{s-1}\psi_p(b_i) +\gamma_p-p^{-1} \log(c) +\int_0^t (F_{\ua, \ub}(t)-F^{(1)}_{\ua, \ub}(t^{\sigma}))\dfrac{dt}{t}. $$
Define the \textit{generalized $p$-adic hypergeometric function of logarithmic type} by 
$$\mathscr{F}_{\ua, \ub}^{(\sigma)}(t)=\dfrac{G_{\ua, \ub}(t)}{F_{\ua, \underline{b}}(t)}.  $$
\end{dfn}

\begin{lem} \label{inW}
We have $G_{\ua, \ub}(t) \in W[[t]]$, therefore
$$\mathscr{F}_{\ua, \underline{b}}^{(\sigma)}(t) \in W[[t]]. $$
\end{lem}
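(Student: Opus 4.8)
The plan is to check the integrality of $G_{\ua,\ub}(t)$ coefficient by coefficient, treating the constant term and the integral term separately, and to reduce the latter to a refined Dwork congruence on the coefficients.

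First I would dispose of the constant term. Writing $\psi_p(z)=-\gamma_p+\widetilde{\psi}_p(z)$ and counting multiplicities, the contributions of $\gamma_p$ in $\sum_{i=1}^{s}\psi_p(a_i)-\sum_{i=1}^{s-1}\psi_p(b_i)+\gamma_p$ cancel exactly (since $-s+(s-1)+1=0$), leaving $\sum_{i=1}^{s}\widetilde{\psi}_p(a_i)-\sum_{i=1}^{s-1}\widetilde{\psi}_p(b_i)$. Each $\widetilde{\psi}_p(z)$ is, by definition, a limit of finite sums $\sum_{1\le k<n,\,p\nmid k}1/k\in\Z_p$, hence lies in $\Z_p$. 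Since $c\in 1+pW$ forces $\log(c)\in pW$ and thus $p^{-1}\log(c)\in W$, the constant term of $G_{\ua,\ub}(t)$ lies in $W$.

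Next the integral term. Put $F_{\ua,\ub}(t)=\sum_n C_nt^n$ and, using $C_m^{(1)}\in\Z_p$ from Theorem \ref{theorem:1} and $\sigma(t)=ct^p$, write $F^{(1)}_{\ua,\ub}(t^\sigma)=\sum_m C_m^{(1)}c^mt^{pm}$. Both series have constant term $1$, so $H(t):=F_{\ua,\ub}(t)-F^{(1)}_{\ua,\ub}(t^\sigma)$ has no constant term and $\int_0^t H\,\frac{dt}{t}$ is a well-defined power series whose $t^n$-coefficient is $E_n/n$, where $E_n$ denotes the $t^n$-coefficient of $H$. For $p\nmid n$ we have $E_n=C_n\in\Z_p$ and $1/n\in\Z_p^\times$, so $E_n/n\in W$ with no further input. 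The only terms needing work are $n=pm$, where $E_{pm}=C_{pm}-C_m^{(1)}c^m$ and integrality of $E_{pm}/(pm)$ amounts to
\[
v_p\bigl(C_{pm}-C_m^{(1)}c^m\bigr)\ge 1+v_p(m).
\]
Since $c\in 1+pW$, an elementary binomial estimate gives $v_p(c^m-1)\ge 1+v_p(m)$, so $C_m^{(1)}c^m\equiv C_m^{(1)}\pmod{p^{1+v_p(m)}}$ and the task reduces to the refined Dwork congruence $C_{pm}\equiv C_m^{(1)}\pmod{p^{1+v_p(m)}}$.

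The main obstacle is this last congruence. For $v_p(m)=0$ it is exactly the coefficient of $t^{pm}$ in Theorem \ref{theorem:1} taken with $n=l=1$, since there $[F^{(1)}_{\ua,\ub}(t^p)]_{<p}=1$ and the only surviving term on the opposite side is $C_m^{(1)}C_0=C_m^{(1)}$. For $v_p(m)=e\ge 1$ I would argue by induction on $e$: comparing the coefficient of $t^{pm}$ on both sides of Theorem \ref{theorem:1} with $n=l=e+1$ produces a relation of the shape $\sum_{j=0}^{p^e-1}\bigl(C_j^{(1)}C_{p(m-j)}-C_{m-j}^{(1)}C_{pj}\bigr)\equiv 0\pmod{p^{e+1}}$ whose $j=0$ term is $C_{pm}-C_m^{(1)}$. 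The delicate point is that the inductive hypothesis controls each cross term ($j\ge 1$) only modulo $p^{1+v_p(j)}$, which is too weak termwise; one must instead extract the cancellation after grouping the indices $j$ according to the common value $v_p(j)=v_p(m-j)$. This is precisely the kind of bookkeeping carried out via the modified key lemma (Lemma \ref{keylem}), which is designed to run such comparisons without assuming that $C_{pm}/C^{(1)}_{\lfloor pm/p\rfloor}$ is a $p$-adic integer---an assumption that fails here by Lemma \ref{lemma:2}. Once the refined congruence is established, $E_{pm}/(pm)\in W$ for every $m$, the integral term lies in $W[[t]]$, and together with the constant term this yields $G_{\ua,\ub}(t)\in W[[t]]$; since $F_{\ua,\ub}(t)$ has unit constant term and is therefore invertible in $W[[t]]$, we conclude $\mathscr{F}^{(\sigma)}_{\ua,\ub}(t)=G_{\ua,\ub}(t)/F_{\ua,\ub}(t)\in W[[t]]$.
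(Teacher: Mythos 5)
Your reduction is sound and in fact coincides step-for-step with the paper's own proof: the constant term is integral because the $\gamma_p$'s cancel, each $\widetilde{\psi}_p(z)\in\Z_p$, and $p^{-1}\log(c)\in W$; the coefficients with $p\nmid n$ are trivially integral; and the estimate $v_p(c^m-1)\geq 1+v_p(m)$ correctly reduces everything to the refined congruence $C_{pm}\equiv C^{(1)}_m \pmod{p^{1+v_p(m)}}$, which is exactly the congruence the paper uses. Your base case $v_p(m)=0$ via Theorem \ref{theorem:1} with $n=l=1$ is also correct. The gap is the inductive step for $v_p(m)=e\geq 1$. From Theorem \ref{theorem:1} you extract $\sum_{j=0}^{p^e-1}\bigl(C^{(1)}_jC_{p(m-j)}-C_{pj}C^{(1)}_{m-j}\bigr)\equiv 0\pmod{p^{e+1}}$ and concede that the inductive hypothesis bounds each cross term only modulo $p^{1+v_p(j)}\leq p^{e}$; but the cancellation you then invoke is never supplied. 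Writing $C_{pj}=C^{(1)}_j+p^{1+v_p(j)}\delta_j$, the sums grouped by $v_p(j)=v$ involve $C^{(1)}_j\delta_{m-j}-\delta_jC^{(1)}_{m-j}$, and controlling these requires knowing how $\delta_j/C^{(1)}_j$ varies with $j$ --- essentially the second-order estimate \eqref{ii}, which is what you were trying to avoid proving. Moreover, Lemma \ref{keylem} cannot be cited here: it concerns sums $\sum C_iC_{j+p^n}-C_jC_{i+p^n}$ built from a single series with indices \emph{shifted} by $p^n$, not mixed $C$/$C^{(1)}$ convolutions with indices \emph{dilated} by $p$, so it does not apply to your cross sum even "in spirit." (For $m=p^e$ the antisymmetry $j\mapsto m-j$ does kill the sum outright, but for general $m$ with $v_p(m)=e$ the involution leaves the summation range and no formal cancellation remains.)

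The missing congruence is available by a much shorter route, and it is the one the paper takes. Since $p\mid pm$, Lemma \ref{lemma:2} (i) gives $C^{(1)}_m/C_{pm}=\prod_{r}\{b_r\}_{pm}/\{a_r\}_{pm}$, a ratio of products of $p$-adic units; as $\{\alpha\}_{p^n}\equiv \pm 1 \pmod{p^n}$ with sign independent of $\alpha$, and numerator and denominator carry the same number $s$ of factors, one gets $C^{(1)}_m/C_{pm}\equiv 1\pmod{p^{1+v_p(m)}}$ --- equivalently, this is Corollary \ref{lemma:17} with $m'=0$ and $n=1+v_p(m)$. The paper itself invokes the sharper statement \eqref{ii} of Lemma \ref{lemma:1} (as a forward reference), which pins down this ratio modulo $p^{2n}$ in terms of $p$-adic digamma values, though only the coarse form above is needed for Lemma \ref{inW}. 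Replacing your induction by a citation of Corollary \ref{lemma:17} (or Lemma \ref{lemma:1}) closes the gap and makes your argument complete.
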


\begin{proof}
Let $G_{\ua, \ub}(t)=\sum D_it^i$, $F_{\ua, \ub}(t)=\sum C_it^i$, and $F^{(1)}_{\ua, \ub}(t)=\sum C^{(1)}_it^i$. 
If $p \nmid i$, then $D_i=C_i/i$ is a $p$-adic integer. 
When $i=mp^k$ with $k \geq 1$ and $(m, p)=1$, 
$D_i$ is written by 
$$D_i=D_{mp^k}=\dfrac{C_{mp^k}-c^{mp^{k-1}}C_{mp^{k-1}}^{(1)}}{mp^k}. $$
Since $c^{mp^{k-1}} \equiv 1 \pmod{p^k}$, it suffices to show that $C_{mp^k} \equiv C_{mp^{k-1}}^{(1)} \pmod{p^k}$. 
It follows from \eqref{ii} below.  
\end{proof}

One of the main theorems in this paper is that $\mathscr{F}^{(\sigma)}_{\ua, \ub}(t)$ satisfies congruence relations similar to Dwork's and Asakura's (see Theorem \ref{main:1}). 
This theorem implies that $\mathscr{F}^{(\sigma)}_{\ua, \ub}(t)$ is a $p$-adic analytic function in the sense of Krasner, i.e. we have the following corollary. 
\begin{cor} \label{maincor}
We have 
$$\mathscr{F}^{(\sigma)}_{\ua, \ub}(t) \in W\langle t, h_{\ua, \ub}(t)^{-1} \rangle:= \varprojlim_{n \geq 1}(W/p^nW[t, h_{\ua, \ub}(t)^{-1}]), \quad h_{\ua, \ub}(t)=\prod_{i=0}^N[F^{(i)}_{\ua, \ub}(t)]_{<p}$$
with some $N \gg 0$. 
\end{cor}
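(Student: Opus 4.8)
The plan is to deduce the corollary from the congruence relations of Theorem \ref{main:1} together with a $p$-adic continuity (Krasner analyticity) argument, exactly as in Dwork's and Asakura's settings; the only point requiring care is that for $\ub\neq\underline{1}$ we must check that the relevant truncations stay invertible in the localized ring. Writing $F_{\ua,\ub}(t)=\sum C_nt^n$ and $G_{\ua,\ub}(t)=\sum D_nt^n$, Theorem \ref{main:1} says that for every $n$ the power series $\mathscr{F}^{(\sigma)}_{\ua,\ub}(t)$ agrees modulo $p^n$ with the rational function $[G_{\ua,\ub}(t)]_{<p^n}/[F_{\ua,\ub}(t)]_{<p^n}$, whose denominator has constant term $C_0=1$ and is therefore a unit in $W[[t]]$. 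Since $W\langle t,h_{\ua,\ub}(t)^{-1}\rangle=\varprojlim_n W/p^nW[t,h_{\ua,\ub}(t)^{-1}]$, it suffices to prove that each $[G_{\ua,\ub}(t)]_{<p^n}/[F_{\ua,\ub}(t)]_{<p^n}$ already lies in $W/p^nW[t,h_{\ua,\ub}(t)^{-1}]$; the compatible system they form (compatibility being immediate from Theorem \ref{main:1}, as two consecutive terms both reduce to $\mathscr{F}^{(\sigma)}_{\ua,\ub}(t)$ modulo $p^n$) then defines the required element of the Tate algebra, mapping to $\mathscr{F}^{(\sigma)}_{\ua,\ub}(t)$ under the inclusion $W\langle t,h_{\ua,\ub}(t)^{-1}\rangle\hookrightarrow W[[t]]$.

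Thus everything reduces to showing that $[F_{\ua,\ub}(t)]_{<p^n}$ is a \emph{unit} in $R_n:=W/p^nW[t,h_{\ua,\ub}(t)^{-1}]$. Because $p$ is nilpotent in $R_n$, it lies in the Jacobson radical, so an element is a unit as soon as its reduction modulo $p$ is a unit in $\F[t,\overline{h}_{\ua,\ub}(t)^{-1}]$, where $\overline{(\cdot)}$ denotes reduction modulo $p$; hence I only need the mod-$p$ statement. Here I would use the $l=1$ case of the Dwork congruence in Theorem \ref{theorem:1}: since $[F^{(1)}_{\ua,\ub}(t^p)]_{<p}=1$, that congruence reads $F_{\ua,\ub}(t)\equiv F^{(1)}_{\ua,\ub}(t^p)\,[F_{\ua,\ub}(t)]_{<p}\pmod{p}$. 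Setting $g_i(t):=[F^{(i)}_{\ua,\ub}(t)]_{<p}$ and iterating gives, in $\F[[t]]$, the factorization $\overline{F_{\ua,\ub}(t)}=\prod_{i\ge 0}\overline{g_i}(t^{p^i})=\prod_{i\ge 0}\overline{g_i}(t)^{p^i}$, the last equality using that the coefficients of $F^{(i)}_{\ua,\ub}$ are rational, hence fixed by Frobenius. Truncating at degree $<p^n$ and noting that the factors with $i\ge n$ contribute only their constant term $1$ below degree $p^n$, I obtain
\begin{equation*}
\overline{[F_{\ua,\ub}(t)]_{<p^n}}=\prod_{i=0}^{n-1}\overline{g_i}(t)^{p^i}.
\end{equation*}

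It remains to observe that each $\overline{g_i}(t)$ is a unit in $\F[t,\overline{h}_{\ua,\ub}(t)^{-1}]$, and this is where the choice $N\gg 0$ enters. Since $\ua,\ub\in\Q\cap\Z_p$, the Dwork prime operation $a\mapsto a'$ is eventually periodic on the finite set of relevant residues, so the sequence $(g_i)_{i\ge 0}$ takes only finitely many values and, for $N$ at least the sum of the pre-period and the period, every $g_i$ occurs among $g_0,\dots,g_N$. Consequently $g_i(t)\mid h_{\ua,\ub}(t)$ in $W[t]$ for all $i\ge 0$, so $\overline{g_i}$ divides $\overline{h}_{\ua,\ub}$ and is invertible in the localization. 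Therefore $\overline{[F_{\ua,\ub}(t)]_{<p^n}}$ is a product of units, hence a unit, and the reduction step finishes the argument. The only genuine subtlety is the bookkeeping in this last step, namely verifying the eventual periodicity of the Dwork primes and that $N\gg0$ captures every factor, together with checking that the mod-$p$ factorization above survives into the generalized $\ub\neq\underline{1}$ setting through Theorem \ref{theorem:1}; the $p$-adic lifting and the passage to the inverse limit are then formal.
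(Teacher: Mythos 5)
Your proof is correct and follows essentially the same route as the paper: both derive the mod-$p$ factorization $[F_{\ua,\ub}(t)]_{<p^n} \equiv \prod_{i=0}^{n-1}([F^{(i)}_{\ua,\ub}(t)]_{<p})^{p^i}$ from Theorem \ref{theorem:1} and combine it with the congruences of Theorem \ref{main:1} to pass to the inverse limit in $W\langle t, h_{\ua,\ub}(t)^{-1}\rangle$. The only (harmless) divergence is the finiteness step, where you invoke eventual periodicity of the Dwork primes on rational parameters, while the paper simply observes that the set $\{[F^{(i)}_{\ua,\ub}(t)]_{<p} \bmod p\}_{i \geq 0}$ of polynomials of degree $<p$ over $\mathbb{F}_p$ is automatically finite; your additional details (the unit argument via nilpotence of $p$ and the compatibility of the inverse system) merely spell out what the paper leaves implicit.
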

\begin{proof}
Similarly to the proof of \cite[Corollary 2.3]{As}, one can show that 
$$[F_{\ua, \ub}(t)]_{<p^n} \equiv [F_{\ua, \ub}(t)]_{<p}([F_{\ua, \ub}^{(1)}(t)]_{<p})^p \cdots ([F_{\ua, \ub}^{(n-1)}(t)]_{<p})^{p^{n-1}}$$
by Theorem \ref{theorem:1}. 
Note that the set $\{[F_{\ua, \ub}^{(i)}(t)]_{<p} \pmod{p}\}_{i \geq 0}$ of polynomials with $\mathbb{F}_{p}$-coefficients has a finite cardinal. 
Therefore, the assertion follows.  
\end{proof}

\section{Proof of Theorem \ref{main:1}} \label{main_proof}
\subsection{Reduction to the case $c=1$} \label{reduce1}
In the rest of the paper, we fix the following notations. Fix $s \geq 1$ and $\ua=(a_1, \ldots, a_s) \in (\mathbb{Q} \cap \mathbb{Z}_p)^s$, $\ub=(b_1, \ldots, b_{s-1}) \in (\mathbb{Q} \cap \mathbb{Z}_p\backslash \mathbb{Z}_{\leq 0})^{s-1}$ satisfying the conditions (i) and (ii) in Theorem \ref{theorem:1}.    
For $c \in1+pW$, let $\sigma(t)=ct^p$ be a Frobenius on $W[[t]]$.  
Put 
$$C_n:=\dfrac{(a_1)_n(a_2)_n \cdots (a _s)_n}{ (b_1)_n \cdots (b_{s-1})_n(1)_n}, \quad C^{(1)}_n:=\dfrac{(a'_1)_n(a'_2)_n \cdots (a' _s)_n}{(b'_1)_n \cdots (b'_{s-1})_n(1)_n}$$
for $n \geq 0$. 
Define $D_n$ by $G_{\ua, \ub}(t)=\sum_{n=0}^{\infty}D_n t^n$, or explicitly 
\begin{align} \label{note}
  \begin{split}
&D_0=\sum_{i=1}^{s}\psi_p(a_i)-\sum_{i=1}^{s-1}\psi_p(b_i) +\gamma_p-p^{-1} \log(c), \\
&D_n = \dfrac{C_n}{n} \ (p \nmid n), \quad D_{mp^k}=\dfrac{C_{mp^k}-c^{mp^{k-1}}C^{(1)}_{mp^{k-1}}}{mp^k} \ (m, k \geq 1). 
\end{split}
\end{align}

\begin{lem}\label{lemma:4} \label{reduce}
The proof of Theorem \ref{main:1} is reduced to the case $c=1$. 
\end{lem}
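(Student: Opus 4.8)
The plan is to isolate the dependence on $c$ by comparing $\sigma$ with the Frobenius $\sigma_0$ given by $\sigma_0(t)=t^p$ (the case $c=1$). Since the digamma and Euler-constant terms in $G_{\ua,\ub}(t)$ do not involve $c$, and since $F_{\ua,\ub}(t)$ is literally independent of $c$, the entire $c$-dependence is concentrated in the single correction
\begin{align*}
H(t):=G_{\ua,\ub}^{(\sigma)}(t)-G_{\ua,\ub}^{(\sigma_0)}(t)
&=-p^{-1}\log(c)+\int_0^t\left(F^{(1)}_{\ua,\ub}(t^p)-F^{(1)}_{\ua,\ub}(ct^p)\right)\frac{dt}{t}\\
&=-p^{-1}\log(c)+\sum_{m\geq 1}\frac{C^{(1)}_m(1-c^m)}{mp}\,t^{mp},
\end{align*}
which lies in $W[[t^p]]$. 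First I would record this decomposition (the constant terms coming from $\psi_p$ and $\gamma_p$ cancel, and the integrands differ only by $F^{(1)}_{\ua,\ub}(t^p)-F^{(1)}_{\ua,\ub}(ct^p)$).

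Next I would exploit that $F_{\ua,\ub}(t)$ is a $c$-independent unit of $W[[t]]$ and that the truncation operator $[\,\cdot\,]_{<p^n}$ is additive, so that
\begin{align*}
\mathscr{F}_{\ua,\ub}^{(\sigma)}(t)-\frac{[G_{\ua,\ub}^{(\sigma)}(t)]_{<p^n}}{[F_{\ua,\ub}(t)]_{<p^n}}
&=\left(\mathscr{F}_{\ua,\ub}^{(\sigma_0)}(t)-\frac{[G_{\ua,\ub}^{(\sigma_0)}(t)]_{<p^n}}{[F_{\ua,\ub}(t)]_{<p^n}}\right)\\
&\quad+\left(\frac{H(t)}{F_{\ua,\ub}(t)}-\frac{[H(t)]_{<p^n}}{[F_{\ua,\ub}(t)]_{<p^n}}\right).
\end{align*}
Granting Theorem \ref{main:1} in the case $c=1$, the first bracket vanishes modulo $p^n$ (resp.\ $p^{n-1}$ for $p=2$), so the lemma reduces to the auxiliary congruence
\begin{align*}
H(t)\,[F_{\ua,\ub}(t)]_{<p^n}\equiv [H(t)]_{<p^n}\,F_{\ua,\ub}(t)\pmod{p^nW[[t]]},
\end{align*}
where I have cleared the denominator $F_{\ua,\ub}(t)[F_{\ua,\ub}(t)]_{<p^n}$, which is a unit with constant term $1$.

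To prove this I would feed in Dwork's congruence (Theorem \ref{theorem:1}), $F_{\ua,\ub}(t)[F^{(1)}_{\ua,\ub}(t^p)]_{<p^n}\equiv F^{(1)}_{\ua,\ub}(t^p)[F_{\ua,\ub}(t)]_{<p^n}\pmod{p^n}$, to trade $F_{\ua,\ub}(t)$ for $F^{(1)}_{\ua,\ub}(t^p)$; this turns the auxiliary congruence into $H(t)\,[F^{(1)}_{\ua,\ub}(t^p)]_{<p^n}\equiv [H(t)]_{<p^n}\,F^{(1)}_{\ua,\ub}(t^p)\pmod{p^n}$. Since both $H(t)$ and $F^{(1)}_{\ua,\ub}(t^p)$ lie in $W[[t^p]]$, substituting $t^p\mapsto u$ collapses this to a single congruence, one Dwork level up, between $H$ and $F^{(1)}_{\ua,\ub}$ with truncation at $p^{n-1}$, which is exactly the shape handled by the key lemma (Lemma \ref{keylem}).

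The main obstacle is this last auxiliary congruence. Its difficulty is that the coefficients $C^{(1)}_m(1-c^m)/(mp)$ of $H$ are generically $p$-adic units — they are \emph{not} individually divisible by large powers of $p$ — so the congruence cannot come from term-wise estimates but must be extracted from genuine cancellation governed by the Dwork congruences, using the sharp bound $v_p(1-c^m)\ge v_p(m)+1$ (valid since $c\in 1+pW$). The weaker modulus $p^{n-1}$ for $p=2$ in Theorem \ref{main:1} is inherited directly from the $c=1$ case through the decomposition above; the remaining steps (the decomposition and the passage to $F^{(1)}_{\ua,\ub}$) are routine bookkeeping.
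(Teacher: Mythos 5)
Your reduction itself is sound and matches the paper's route: the paper also isolates the $c$-dependence and then argues ``similarly to the proof of \cite[Lemma 3.5]{As}'', with the one substantive input being the congruence
$\frac{D^k F_{\ua,\ub}(t)}{F_{\ua,\ub}(t)} \equiv \frac{[D^k F_{\ua,\ub}(t)]_{<p^n}}{[F_{\ua,\ub}(t)]_{<p^n}} \pmod{p^n}$ for $D=t\frac{d}{dt}$, which follows from Theorem \ref{theorem:1} as in \cite[Theorem 2.4]{As2}. Your computation of $H(t)$, the clearing of the unit denominators, and the trade of $F_{\ua,\ub}(t)$ for $F^{(1)}_{\ua,\ub}(t^p)$ via Dwork's congruence are all fine. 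The gap is in your last step. First, Lemma \ref{keylem} is not ``exactly the shape'' needed: it is a bilinear antisymmetry statement about the hypergeometric coefficients $C_i$ themselves, whereas your auxiliary congruence pairs $F^{(1)}_{\ua,\ub}$ against $H$, whose coefficients $C^{(1)}_m(1-c^m)/(mp)$ are not hypergeometric. To run the Section \ref{main_proof} machinery on $H$ you would additionally need an analogue of Lemma \ref{lemma:19} for the ratios $q'_m=(1-c^m)/(mp)$, and in fact a \emph{refined} one: after substituting $u=t^p$ your target is a congruence modulo $p^n$ with truncation at $p^{n-1}$, one power of $p$ better than what a bare ``one Dwork level up'' application of the key lemma delivers (that gives only $p^{n-1}$). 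The sharp bound $v_p(1-c^m)\geq v_p(m)+1$ that you invoke yields integrality of $q'_m$ but not this extra power.

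The missing mechanism is the logarithmic expansion $1-c^m=-\sum_{k\geq 1}(m\log c)^k/k!$, which gives
\begin{align*}
H(t)=-p^{-1}\log(c)\,F^{(1)}_{\ua,\ub}(t^p)-p^{-1}\sum_{k\geq 2}\frac{(\log c)^k}{k!}\,(D^{k-1}F^{(1)}_{\ua,\ub})(t^p).
\end{align*}
Here the $k=1$ term is proportional to $F^{(1)}_{\ua,\ub}(t^p)$ itself, so its contribution to $H\,[F^{(1)}_{\ua,\ub}(t^p)]_{<p^n}-[H]_{<p^n}\,F^{(1)}_{\ua,\ub}(t^p)$ vanishes \emph{identically}, while for $k\geq 2$ the coefficients $p^{-1}(\log c)^k/k!$ have $p$-adic valuation $\geq k-1-v_p(k!)\geq 1$ for odd $p$; combined with the $D^k$-congruences above (applied at the Dwork-prime level), this recovers the full modulus $p^n$. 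This is precisely the argument of \cite[Lemma 3.5]{As} that the paper cites, and it also locates the true source of the weaker modulus at $p=2$: there $v_2(k!)$ can eat the gain ($k-1-v_2(k!)\geq 0$ only), so the reduction step itself loses a power of $2$ — it is not, as you assert, merely inherited from the $c=1$ case through your decomposition. In short: right skeleton, but the crux you flag as ``the main obstacle'' is resolved neither by Lemma \ref{keylem} nor by the valuation bound you cite; it needs the $D^k$-expansion (or, equivalently, a strengthened Lemma \ref{lemma:19} for $q'_m$ showing $q'_m\equiv q'_{m'}\pmod{p^{r+1}}$ when $m\equiv m'\pmod{p^r}$, which again comes from the same expansion because the $k=1$ term is constant in $m$).
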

\begin{proof}
Let $D=t\frac{d}{dt}$. Then we can prove that 
$$\dfrac{D^k F_{\ua, \ub}(t)}{F_{\ua, \ub}(t)} \equiv \dfrac{[D^kF_{\ua, \ub}(t)]_{< p^n}}{F_{\ua, \ub}(t)_{<p^n}} \pmod{p^n}$$
by using Theorem \ref{theorem:1} (see the proof of \cite[Theorem 2.4]{As2}). 
Using this result, we can prove the lemma similarly to the proof of \cite[Lemma 3.5]{As}. 
\end{proof}

\subsection{Preliminary lemmas} \label{Prelem} 
In this section, we prepare some lemmas to prove the congruence relations according to \cite[Section 3.3]{As}. 
In the rest of the section, we suppose $c=1$. 
Let  
\begin{align*}
D_0=\sum_{i=1}^{s}\psi_p(a_i)-\sum_{i=1}^{s-1}\psi_p(b_i) +\gamma_p, \hspace{5mm} D_i=\dfrac{C_i-C^{(1)}_{i/p}}{i} \quad  (i\in \mathbb{Z}_{\geq 1}), 
\end{align*}
where the notations are as in \eqref{note} and we define $C_{i/p}=C^{(1)}_{i/p}=0$ unless $p \mid i$.

\begin{lem}\label{lemma:2}
For an $p$-adic integer $\alpha \in \mathbb{Z}_p$ and $n \in \mathbb{Z}_{\geq 1}$, we define
$$\{\alpha\}_n=\prod_{\substack {1\leq i \leq n \\ p \nmid (\alpha +i-1)}}(\alpha+i-1),$$
and $\{\alpha\}_0=1$. 
Let $a, b \in \Z_p \backslash \Z_{\leq 0}$ such that $[a]_0< [b]_0$. 
Then for any $m \in \mathbb{Z}_{\geq 0}$, we have the following. 
\begin{enumerate}
\item If $m \equiv 0, 1, \ldots, [a]_0 \pmod{p}$, then we have
$$\dfrac{(a)_m}{(b)_m} \left(\dfrac{(a^{\prime})_{\lfloor m/p \rfloor}}{(b^{\prime})_{\lfloor m/p \rfloor}}\right)^{-1}=\dfrac{\{a\}_m}{\{b\}_m}. $$
\item If $m \equiv [a]_0+1, \ldots, [b]_0 \pmod{p}$, then we have
$$\dfrac{(a)_m}{(b)_m} \left(\dfrac{(a^{\prime})_{\lfloor m/p \rfloor}}{(b^{\prime})_{\lfloor m/p \rfloor}}\right)^{-1}=\left(a+[a]_0+p\lfloor \dfrac{m}{p} \rfloor\right)\dfrac{\{a\}_m}{\{b\}_m}. $$
\item If $m \equiv [b]_0+1, \ldots, p-1 \pmod{p}$, then we have 
$$\dfrac{(a)_m}{(b)_m} \left(\dfrac{(a^{\prime})_{\lfloor m/p \rfloor}}{(b^{\prime})_{\lfloor m/p \rfloor}}\right)^{-1}=\dfrac{\left(a+[a]_0+p\lfloor \dfrac{m}{p} \rfloor\right)}{\left(b+[b]_0 +p\lfloor \dfrac{m}{p} \rfloor\right)}\dfrac{\{a\}_m}{\{b\}_m}. $$
\end{enumerate}
\end{lem}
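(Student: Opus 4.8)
The plan is to establish a single factorization of the Pochhammer symbol $(a)_m$ that separates out the factors divisible by $p$, and then to read off all three cases by dividing the resulting identity for $a$ by the one for $b$.

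First I would determine which of the $m$ factors of $(a)_m=\prod_{j=0}^{m-1}(a+j)$ are divisible by $p$. From the $p$-adic expansion one has $a\equiv -[a]_0\pmod p$, so $p\mid(a+j)$ exactly when $j\equiv[a]_0\pmod p$; writing such an index as $j=[a]_0+kp$ with $k\ge 0$ and using $pa'=a+[a]_0$, the divisible factor equals $a+j=p(a'+k)$. Grouping the divisible factors (indexed by $k=0,\dots,K_a-1$) separately from the remaining ones, which by definition constitute $\{a\}_m$, gives
\begin{align*}
(a)_m=\{a\}_m\cdot p^{K_a}\,(a')_{K_a},
\end{align*}
where $K_a=\#\{\,0\le j\le m-1: j\equiv[a]_0\pmod p\,\}$. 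Since $a\notin\Z_{\le 0}$ every factor $a+j$ is nonzero, so each $a'+k$ occurring here is nonzero; this already guarantees $(a')_{\lfloor m/p\rfloor}\neq 0$, so the ratios in the statement are well defined.

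Next I would compute $K_a$ explicitly. Writing $m=qp+r$ with $q=\lfloor m/p\rfloor$ and $0\le r<p$, each of the $q$ complete blocks of $p$ consecutive indices contributes exactly one $j\equiv[a]_0$, while the partial block $\{qp,\dots,qp+r-1\}$ contributes one more precisely when $r>[a]_0$. Hence $K_a=q+1$ if $r>[a]_0$ and $K_a=q$ otherwise. Using $(a')_{q+1}=(a')_q(a'+q)$ and $p(a'+q)=a+[a]_0+p\lfloor m/p\rfloor$, dividing by $(a')_q=(a')_{\lfloor m/p\rfloor}$ yields
\begin{align*}
\frac{(a)_m}{(a')_{\lfloor m/p\rfloor}}=
\begin{cases}
\{a\}_m\,p^{q}\,\bigl(a+[a]_0+p\lfloor m/p\rfloor\bigr) & \text{if } r>[a]_0,\\
\{a\}_m\,p^{q} & \text{if } r\le[a]_0,
\end{cases}
\end{align*}
together with the same identity with $a,[a]_0$ replaced by $b,[b]_0$.

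Finally I would take the quotient of the two identities. The common factor $p^{q}$ cancels, and because $[a]_0<[b]_0$ the three residue ranges in (i), (ii), (iii) are exactly the three regions $r\le[a]_0$, $[a]_0<r\le[b]_0$, and $r>[b]_0$; plugging the appropriate branch of the displayed formula for $a$ and for $b$ into $\frac{(a)_m}{(b)_m}\bigl(\frac{(a')_{\lfloor m/p\rfloor}}{(b')_{\lfloor m/p\rfloor}}\bigr)^{-1}$ produces the three stated expressions. I expect the only delicate point to be the boundary counting in the step computing $K_a$: one must check that the partial block contributes an extra index exactly under the strict inequality $r>[a]_0$ (and not $r\ge[a]_0$), since this is precisely what makes the half-open residue ranges of the three cases fit together without overlap or gap.
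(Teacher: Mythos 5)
Your proof is correct, and it is essentially the argument underlying the paper's one-line proof: the paper simply cites \cite[Lemma 3.6]{As}, which is exactly the single-parameter factorization $(a)_m=\{a\}_m\,p^{K_a}(a')_{K_a}$ (with the same case distinction on $m \bmod p$ relative to $[a]_0$) that you derive from scratch before dividing the identity for $a$ by the one for $b$. Your boundary count $K_a=q+1$ iff $r>[a]_0$ and the nonvanishing observations are all accurate, so your write-up is a valid self-contained version of the cited proof.
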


\begin{proof}
This follows from \cite[Lemma 3.6]{As}. 
\end{proof}

\begin{cor}[Dwork] \label{lemma:17}
For any $m_1, m_2 \in \mathbb{Z}_{ \geq 0}$, if $m_1 \equiv m_2  \pmod{p^n}$, then we have 
$${C_{m_1}}{C^{(1)}_{\lfloor m_2/p \rfloor}} \equiv  {C_{m_2}} {C^{(1)}_{\lfloor m_1/p \rfloor}} \pmod{p^n}. $$
\end{cor}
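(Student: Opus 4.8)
The plan is to prove the cross-multiplied congruence by isolating the ratio $R_m := C_m/C^{(1)}_{\lfloor m/p\rfloor} \in \Q_p$ and analysing it through Lemma \ref{lemma:2}. Write $\mu = \lfloor m/p\rfloor$, $\mu' = \lfloor m'/p\rfloor$ and $r = m \bmod p$. Since $m \equiv m' \pmod{p^n}$ with $n \geq 1$, the last $p$-adic digits agree, so $m' \bmod p = r$ as well, and moreover $\mu \equiv \mu' \pmod{p^{n-1}}$. The decisive elementary identity is
\[
C_m C^{(1)}_{\mu'} - C_{m'}C^{(1)}_{\mu} = C^{(1)}_{\mu}C^{(1)}_{\mu'}\,(R_m - R_{m'}),
\]
so it suffices to control $R_m - R_{m'}$, the point of the cross-multiplied form being that the two integers $C^{(1)}_{\mu},C^{(1)}_{\mu'} \in \Z_p$ on the right can absorb whatever denominators make $R_m$ non-integral.

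First I would pair each numerator Pochhammer symbol with a denominator one. Sorting by the zeroth digit, condition (ii) of Theorem \ref{theorem:1} (at $n=0$) gives $[a_{j+1}]_0 < [b_j]_0$, and the one remaining numerator factor pairs with the symbol $(1)_\bullet$, for which $[1]_0 = p-1$; thus every pair $(a,b)$ satisfies $[a]_0 < [b]_0$, and Lemma \ref{lemma:2} applies to it. Because the case in Lemma \ref{lemma:2} is determined solely by $r$ together with the fixed digits $[a]_0,[b]_0$, the pairs for $m$ and for $m'$ fall into exactly the same cases. Using the identity $a + [a]_0 + p\mu = p(a'+\mu)$, this lets me write $R_m = p^{\nu}P_\mu U_m$, where $\nu$ counts the case-(ii) pairs (the same for $m$ and $m'$), $U_m = \prod\{a\}_m/\prod\{b\}_m$ is the product of the unit parts, and $P_\mu$ collects the $\mu$-linear factors $(a'+\mu)$ and $(a'+\mu)/(b'+\mu)$ coming from cases (ii) and (iii).

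The two congruences I would then establish are (a) $U_m \equiv U_{m'} \pmod{p^n}$ and (b) $P_\mu \equiv P_{\mu'} \pmod{p^{n-1}}$. For (a), each ratio $\{a\}_{m'}/\{a\}_m$ is a product of the $p$-adic units lying in a window of $m'-m \in p^n\Z$ consecutive integers; since over any $p^n$ consecutive integers these units run exactly once through all of $(\Z/p^n)^{\times}$, whose product is $\pm 1$ by Wilson's theorem, each such ratio is $\equiv (\pm1)^{(m'-m)/p^n}\pmod{p^n}$ with the \emph{same} sign for every Pochhammer symbol. As the numerator and denominator of $U$ each contain exactly $s$ symbols, these signs cancel and $U_m \equiv U_{m'}\pmod{p^n}$ (this argument is insensitive to the $p=2$ sign anomalies, precisely because the counts match). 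Assertion (b) is then immediate from $\mu \equiv \mu'\pmod{p^{n-1}}$ applied to the linear factors $a'+\mu$, $b'+\mu$.

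Writing $R_m - R_{m'} = p^{\nu}\big(P_\mu(U_m-U_{m'}) + U_{m'}(P_\mu-P_{\mu'})\big)$ and multiplying by $C^{(1)}_\mu C^{(1)}_{\mu'}$, assertion (a) disposes of the first term modulo $p^n$. The main obstacle is the second term in the presence of case-(iii) pairs: there the factor $(a'+\mu)/(b'+\mu)$ contributes no power of $p$, yet its denominator $b'+\mu$ may be divisible by $p$, so $R_m$ is genuinely non-integral and (b) yields only a congruence modulo $p^{n-1}$. I expect this to be resolved exactly as the cross-multiplied formulation is built to resolve it: by the integrality $C_m,C_{m'}\in\Z_p$ one has $v(C^{(1)}_\mu)\ge\max(0,-v(R_m))$ and $v(C^{(1)}_{\mu'})\ge\max(0,-v(R_{m'}))$, and precisely in the problematic case-(iii) situations these lower bounds are strictly positive, coming from the factor $(b')_\mu$ in the denominator of $C^{(1)}_\mu$. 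Thus $C^{(1)}_\mu C^{(1)}_{\mu'}$ both clears the spurious denominator of $P_\mu-P_{\mu'}$ and supplies the extra power of $p$ that promotes its $p^{n-1}$ up to the required $p^n$. Carrying out this valuation bookkeeping uniformly across the three cases of Lemma \ref{lemma:2}, and for both signs of $m'-m$, is the one genuinely delicate point; everything else is the formal manipulation above.
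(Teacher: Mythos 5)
Your overall route---pair each $a_k$ with a $b_k$ (the leftover one with $1$, using $[1]_0=p-1$) via condition (ii) of Theorem \ref{theorem:1}, split $C_m/C^{(1)}_{\lfloor m/p\rfloor}$ into unit blocks and linear factors via Lemma \ref{lemma:2}, and compare the unit blocks by Wilson's theorem with the $\pm 1$'s cancelling because numerator and denominator carry the same number $s$ of Pochhammer symbols---is exactly the direct argument the paper sketches: its two displayed facts are precisely your step (a) and a linear-factor comparison. Step (a) is correct. The defect is in the linear factors. By rewriting $a+[a]_0+p\lfloor m/p\rfloor=p(a'+\mu)$ and shunting the $p$'s into a global prefactor $p^{\nu}$, you are reduced to comparing $P_{\mu}$ with $P_{\mu'}$ through $\mu\equiv\mu'\pmod{p^{n-1}}$, a genuine loss of one power of $p$. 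The paper's observation is that no loss occurs if you do not factor: since $m$ and $m'$ share the residue $r$ mod $p$, one has $p\lfloor m/p\rfloor=m-r\equiv m'-r=p\lfloor m'/p\rfloor\pmod{p^n}$, so the factors of Lemma \ref{lemma:2} (ii)--(iii), kept in the form $a+[a]_0+p\lfloor m/p\rfloor$, agree modulo the \emph{full} $p^n$; equivalently, $p(a'+\mu)-p(a'+\mu')=p(\mu-\mu')\in p^n\Z_p$, which your product $P_\mu$ forgets and must then win back.

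The win-back is where your proposal has a genuine gap: it is asserted, not proved, and the asserted mechanism is insufficient. The inequality $v_p(C^{(1)}_{\mu})\geq\max(0,-v_p(R_m))$ is true, but it is not ``strictly positive precisely in the problematic case-(iii) situations''. Write $\nu$ for the number of case-(ii) pairs, and $A_\mu$, $B_\mu=\prod_k(b_k'+\mu)$ for the numerator and denominator of $P_\mu$; then $v_p(R_m)=\nu+v_p(A_\mu)-v_p(B_\mu)$, so integrality of $C_m$ only yields $v_p(C^{(1)}_{\mu})\geq v_p(B_\mu)-v_p(A_\mu)-\nu$, whereas forcing $p^n\mid C^{(1)}_{\mu}C^{(1)}_{\mu'}\,p^{\nu}U_{m'}(P_\mu-P_{\mu'})$ from your mod-$p^{n-1}$ congruence requires
\begin{equation*}
v_p(C^{(1)}_{\mu})+v_p(C^{(1)}_{\mu'})+\nu\;\geq\;1+v_p(B_\mu)+v_p(B_{\mu'}).
\end{equation*}
Take $\nu=1$, a single case-(iii) pair with $v_p(b'+\mu)=v_p(b'+\mu')=1$ and $v_p(a'+\mu)=0$, all other factors units: then $v_p(R_m)=0$, your bound gives nothing, and the requirement is a gain of $2$. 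So the ``valuation bookkeeping'' you defer does not close on the stated bounds; finer input (e.g.\ the exact identity $\frac{a'+\mu}{b'+\mu}-\frac{a'+\mu'}{b'+\mu'}=\frac{(\mu-\mu')(b'-a')}{(b'+\mu)(b'+\mu')}$ together with genuinely sharper divisibility of $C^{(1)}_\mu$) would be needed. Note also that the paper's primary proof sidesteps all of this: the corollary is deduced directly from Dwork's series congruence in Theorem \ref{theorem:1}, the Lemma \ref{lemma:2} route being offered as an easy alternative exactly because the mod-$p^n$ form of the linear factors above removes the deficiency you are fighting. To repair your write-up, keep the linear factors unfactored so that all case-(ii) contributions match mod $p^n$ outright, and either handle the remaining case-(iii) denominators by cross-multiplied integrality with the refined numerator identity, or simply derive the statement from Theorem \ref{theorem:1}.
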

\begin{rmk}
If $\ub \neq \underline{1}$, then $C_m/C^{(1)}_{\lfloor m/p \rfloor}$ is not a $p$-adic integer in general.   
\end{rmk}
\begin{proof}
This follows from Theorem \ref{theorem:1}, or 
we can easily show this by using  
Lemma \ref{lemma:2} on noticing the fact that 
for any $\alpha \in \mathbb{Z}_p$, we have 
\begin{align*}
&\left(a_i+[a_i]_0+p\lfloor \dfrac{m_1}{p} \rfloor\right) \equiv \left(a_i+[a_i]_0+p\lfloor \dfrac{m_2}{p} \rfloor\right), \\
& \{\alpha \}_{p^n} \equiv \prod_{i \in (\Z/p^n\Z)^*} i \equiv 
\left\{
\begin{array}{ll}
1 & (p=2, n \neq 2), \\
-1 & (otherwise)
\end{array}
\right.
\end{align*}
modulo $p^n$. 
\end{proof}

\begin{lem}\label{lemma:1}
Let $a, b \in \mathbb{Z}_p \backslash \mathbb{Z}_{\leq 0}$ and $m, n \in \Z_{\geq 1}$. 
Then we have 
\begin{align} \label{i}
1-\dfrac{(a^{\prime})_{mp^{n-1}}}{(b^{\prime})_{mp^{n-1}}}\left( \dfrac{(a)_{ mp^n}}{(b)_{mp^n}}\right)^{-1} \equiv mp^n(\psi_p(a) -\psi_p(b)) \pmod{p^{2n}}. 
\end{align}
Moreover, 
$C_{mp^{n-1}}^{(1)}/C_{mp^n}$ and $D_k /C_k$ are $p$-adic intgers for all $k, m \geq 0$, $n \geq 1$, and 
\begin{align}
&\dfrac{C_{mp^{n-1}}^{(1)}}{C_{mp^n}} \equiv 1-mp^n\left(\sum_{i=1}^s\psi_p(a_i) -\sum_{i=1}^{s-1}\psi_p(b_i) + \gamma_p \right)\pmod{p^{2n}}, \label{ii} \\
&p \nmid m \quad \Longrightarrow \quad  \dfrac{D_{mp^n}}{C_{mp^n}}=\dfrac{1-C^{(1)}_{mp^{n-1}}/C_{mp^n}}{mp^n} \equiv D_0 \pmod{p^n}. \label{iii}
\end{align}
\end{lem}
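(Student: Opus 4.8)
The plan is to reduce all three congruences to a single computation of the truncated products $\{a\}_{mp^n}$ introduced in Lemma \ref{lemma:2}, and to read off \eqref{i}--\eqref{iii} from it.

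First I would record the factorization obtained by separating, among the $mp^n$ consecutive factors of $(a)_{mp^n}=\prod_{0\le j<mp^n}(a+j)$, those divisible by $p$. Writing $a+[a]_0=pa'$, the $p$-divisible factors are exactly $a+[a]_0+pk=p(a'+k)$ for $0\le k<mp^{n-1}$, so
$$(a)_{mp^n}=p^{mp^{n-1}}(a')_{mp^{n-1}}\,\{a\}_{mp^n}.$$
Hence $\frac{(a')_{mp^{n-1}}}{(a)_{mp^n}}=\frac{1}{p^{mp^{n-1}}\{a\}_{mp^n}}$, and the left-hand side of \eqref{i} becomes $1-\{b\}_{mp^n}/\{a\}_{mp^n}$, a ratio of $p$-adic units. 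Substituting the same factorization into $C^{(1)}_{mp^{n-1}}/C_{mp^n}$ (using $1'=1$) makes all powers of $p$ cancel, since $-s+(s-1)+1=0$, and yields the unit expression
$$\frac{C^{(1)}_{mp^{n-1}}}{C_{mp^n}}=\frac{\{1\}_{mp^n}\prod_{i=1}^{s-1}\{b_i\}_{mp^n}}{\prod_{i=1}^{s}\{a_i\}_{mp^n}};$$
in particular $C^{(1)}_{mp^{n-1}}/C_{mp^n}$ is a unit, which settles one of the integrality claims.

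The heart of the argument is the estimate $\{a\}_{mp^n}/\{1\}_{mp^n}\equiv 1+mp^n\widetilde{\psi}_p(a)\pmod{p^{2n}}$ for $a\in\mathbb{Z}_p\backslash\mathbb{Z}_{\le 0}$. To prove it I would first reduce to the case of a positive integer $a$: each factor $a+j$ is a unit depending on $a$ only modulo $p^{2n}$ (its $p$-divisibility depends only on $a\bmod p$), so $\{a\}_{mp^n}$ is unchanged modulo $p^{2n}$ when $a$ is replaced by a positive integer $a_\ast\equiv a\pmod{p^{2n}}$, while $\widetilde{\psi}_p(a)\equiv\widetilde{\psi}_p(a_\ast)$ modulo $p^{2n}$ by \eqref{eq:6} (valid for all $p$ here since $2n\ge 2$). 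For integral $a$ I would establish the increment identity $\{a+1\}_{mp^n}/\{a\}_{mp^n}=1+mp^n/a$ when $p\nmid a$ and $=1$ when $p\mid a$ (the two products differ only in the boundary factors $a$ and $a+mp^n$, which are congruent mod $p$), and telescope from $1$ to $a$. Since every increment lies in $1+p^n\mathbb{Z}_p$, the product linearizes modulo $p^{2n}$ irrespective of the number of factors, giving $\prod_{1\le k<a,\,p\nmid k}(1+mp^n/k)\equiv 1+mp^n\sum_{1\le k<a,\,p\nmid k}\tfrac1k=1+mp^n\widetilde{\psi}_p(a)$.

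Finally I would assemble the three congruences. Taking the ratio of the master estimate for $a$ and $b$ gives $\{b\}_{mp^n}/\{a\}_{mp^n}\equiv 1+mp^n(\widetilde{\psi}_p(b)-\widetilde{\psi}_p(a))$, whence \eqref{i} after recalling $\psi_p=-\gamma_p+\widetilde{\psi}_p$. Substituting the estimate into the unit expression for $C^{(1)}_{mp^{n-1}}/C_{mp^n}$ and expanding to first order (again all cross terms are $O(p^{2n})$), the factor $\{1\}_{mp^n}$ drops out and one is left with $1-mp^n\big(\sum_{i=1}^{s}\widetilde{\psi}_p(a_i)-\sum_{i=1}^{s-1}\widetilde{\psi}_p(b_i)\big)$; since $D_0=\sum_{i=1}^{s}\psi_p(a_i)-\sum_{i=1}^{s-1}\psi_p(b_i)+\gamma_p=\sum_{i=1}^{s}\widetilde{\psi}_p(a_i)-\sum_{i=1}^{s-1}\widetilde{\psi}_p(b_i)$, this is exactly \eqref{ii}. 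Then \eqref{iii} follows by dividing \eqref{ii} by $mp^n$: when $p\nmid m$ the modulus drops from $p^{2n}$ to $p^n$, which simultaneously proves that $D_{mp^n}/C_{mp^n}\equiv D_0\pmod{p^n}$ is a $p$-adic integer; the remaining integrality of $D_k/C_k$ for $p\nmid k$ is immediate from $D_k/C_k=1/k$. I expect the only delicate point to be the reduction to integral $a$ together with the bookkeeping that guarantees all cancellations genuinely hold modulo $p^{2n}$ rather than merely $p^n$; the telescoping increment identity is the technical core, and once it is in place the rest is formal.
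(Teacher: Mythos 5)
Your proposal is correct, and it takes a more self-contained route than the paper at the one point where the paper leans on an external result. The paper's own proof gets the integrality of $C^{(1)}_{mp^{n-1}}/C_{mp^n}$ from Lemma \ref{lemma:2} (i), then establishes \eqref{i} by quoting Asakura's estimate $1-\frac{(a')_{mp^{n-1}}}{(1)_{mp^{n-1}}}\bigl(\frac{(a)_{mp^n}}{(1)_{mp^n}}\bigr)^{-1}\equiv mp^n(\psi_p(a)+\gamma_p)\pmod{p^{2n}}$ as a black box (\cite[Lemma 3.8]{As}) and dividing the estimates for $a$ and $b$; \eqref{ii} then follows from \eqref{i} applied to the pairs $(a_i,b_i)$ and $(a_s,1)$ with $\psi_p(1)=-\gamma_p$, and \eqref{iii} from \eqref{ii}, exactly as in your assembly stage. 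What you do differently is to open the black box: your factorization $(a)_{mp^n}=p^{mp^{n-1}}(a')_{mp^{n-1}}\{a\}_{mp^n}$ is precisely the content of Lemma \ref{lemma:2} (i) in the case $m\equiv 0\pmod p$ (note all $mp^{n-1}$ powers of $p$ cancel in $C^{(1)}_{mp^{n-1}}/C_{mp^n}$ because $1'=1$), and your master estimate $\{a\}_{mp^n}/\{1\}_{mp^n}\equiv 1+mp^n\widetilde{\psi}_p(a)\pmod{p^{2n}}$ is equivalent to the cited Lemma 3.8, which you re-prove from scratch via the continuity reduction to integral $a$ (legitimate, since \eqref{eq:6} applies with modulus exponent $2n\geq 2$ even for $p=2$, and each unit factor of $\{a\}_{mp^n}$ moves by multiples of $p^{2n}$) and the telescoping increment $\{a+1\}_{mp^n}/\{a\}_{mp^n}=1+mp^n/a$ for $p\nmid a$. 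The linearization modulo $p^{2n}$ of products of terms in $1+p^n\Z_p$ is sound regardless of the number of factors, so the cross-term bookkeeping you worried about is fine. Your route buys a fully elementary, self-contained proof and even yields slightly more than stated, namely that $C^{(1)}_{mp^{n-1}}/C_{mp^n}$ is a $p$-adic \emph{unit}, not merely an integer; the paper's route is shorter by delegating the analytic core to the cited lemma. One cosmetic caveat: as in the paper, the claims \eqref{ii} and \eqref{iii} implicitly require $a_i\notin\Z_{\leq 0}$ (so that $C_{mp^n}\neq 0$), which your factorization uses tacitly and which is the standing hypothesis of Theorem \ref{main:1}.
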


\begin{proof}
By Lemma \ref{lemma:2} (i), we have $C_{mp^{n-1}}^{(1)}/{C_{mp^n}} \in \Z_p$. 
It is enough to show \eqref{i} since \eqref{ii} follows from \eqref{i} and \eqref{iii} follows from \eqref{ii}. 
Moreover, \eqref{iii} implies that $D_k/C_k \in \Z_p$ for any $k \geq 0$.   

Let us show \eqref{i}. By \cite[Lemma 3.8]{As}, we have 
$$1-\dfrac{(a^{\prime})_{mp^{n-1}}}{(1)_{mp^{n-1}}}\left(\dfrac{(a)_{mp^n}}{(1)_{mp^{n}}}\right)^{-1} \equiv mp^n(\psi_p(a)+\gamma_p) \pmod{p^{2n}}$$
for any $a \in \Z_p \backslash \Z_{ \leq 0}$ and $m, n \in \Z_{\geq 1}$. 
Hence we have 
\begin{align*}
\dfrac{(a^{\prime})_{mp^{n-1}}}{(b^{\prime})_{mp^{n-1}}}\left( \dfrac{(a)_{ mp^n}}{(b)_{mp^n}}\right)^{-1} 
 &\equiv \dfrac{(a^{\prime})_{mp^{n-1}}}{(1)_{mp^{n-1}}}\left(\dfrac{(a)_{mp^n}}{(1)_{mp^{n}}}\right)^{-1}\left[ \dfrac{(b^{\prime})_{mp^{n-1}}}{(1)_{mp^{n-1}}}\left(\dfrac{(b)_{mp^n}}{(1)_{mp^{n}}}\right)^{-1}\right]^{-1}\\
&\equiv (1-mp^n(\psi_p(a)+\gamma_p))(1-mp^n(\psi_p(b)+\gamma_p))^{-1}\\
&= (1-mp^n(\psi_p(a)+\gamma_p))\sum_{i=0}^{\infty}(mp^n(\psi_p(b)+\gamma_p))^i\\
&\equiv (1-mp^n(\psi_p(a)+\gamma_p))(1+mp^n(\psi_p(b)+\gamma_p))\\
&\equiv 1-mp^n(\psi_p(a)-\psi_p(b))
\end{align*}
modulo $p^{2n}$, which completes the proof of \eqref{i}. 
\end{proof}

\begin{lem} \label{lemma:19}
For any $m_1, m_2 \in \mathbb{Z}_{\geq 0}$ and $n \in \Z_{\geq 1}$, 
if $m_1 \equiv m_2 \pmod{p^n}$, then we have
$$\dfrac{D_{m_1}}{C_{m_1}} \equiv \dfrac{D_{m_2}}{C_{m_2}} \pmod{p^n}.$$
\end{lem}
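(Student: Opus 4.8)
The plan is to isolate the dependence on $\ub$ into purely algebraic (polynomial) combinations of single-parameter quantities, so that the only analytic input needed is the case $\ub=\underline{1}$, $s=1$ already treated by Asakura. Write $R_m:=D_m/C_m$. First I would reduce modulo $p$: since $m\equiv m'\pmod{p^n}$ forces $m\equiv m'\pmod p$, we have $p\mid m\iff p\mid m'$. If $p\nmid m$ (hence $p\nmid m'$), then by \eqref{note} $R_m=1/m$ and $R_{m'}=1/m'$, and $1/m\equiv 1/m'\pmod{p^n}$ because $m,m'$ are units with $m\equiv m'\pmod{p^n}$; this disposes of the prime-to-$p$ case. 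So I may assume $p\mid m$ and set $u_m:=C^{(1)}_{m/p}/C_m$, so that $R_m=(1-u_m)/m$.

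Next I would factor $u_m$ into single-parameter pieces. A direct rearrangement of the Pochhammer symbols gives $u_m=\prod_{i=1}^s h_{a_i}(m)\big/\prod_{j=1}^{s-1}h_{b_j}(m)$, where for $\alpha\in\Z_p\setminus\Z_{\le0}$ I set $h_\alpha(m):=\dfrac{(\alpha')_{m/p}}{(1)_{m/p}}\cdot\dfrac{(1)_m}{(\alpha)_m}$. The key observation is that $h_\alpha(m)$ is exactly the ratio $C^{(1)}_{m/p}/C_m$ attached to the single-parameter series ${}_1F_0(\alpha;;t)=(1-t)^{-\alpha}$. Hence Lemma \ref{lemma:1}, applied with this one parameter, shows $h_\alpha(m)\in\Z_p$, $h_\alpha(m)\equiv1\pmod{p^{v_p(m)}}$, and that the single-parameter quotient $r_\alpha(m):=(1-h_\alpha(m))/m$ lies in $\Z_p$; it is the single-parameter analogue of $R_m$.

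Now introduce the universal polynomials $A(m,\underline x)=\prod_{i=1}^s(1-mx_i)$ and $B(m,\underline y)=\prod_{j=1}^{s-1}(1-my_j)$ in $\Z[m,\underline x,\underline y]$. Since $A=B=1$ at $m=0$, the difference $B-A$ is divisible by $m$ in this ring, say $B-A=m\,P$ with $P\in\Z[m,\underline x,\underline y]$. Substituting $x_i=r_{a_i}(m)$, $y_j=r_{b_j}(m)$ and writing $u_m=A_m/B_m$ for the corresponding evaluations, I obtain
\[
R_m=\frac{1-u_m}{m}=\frac{B_m-A_m}{m\,B_m}=\frac{P\bigl(m,(r_{a_i}(m))_i,(r_{b_j}(m))_j\bigr)}{B\bigl(m,(r_{b_j}(m))_j\bigr)},
\]
where $B_m\equiv1\pmod p$ is a unit. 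The only analytic input that remains is the single-parameter continuity $r_\alpha(m)\equiv r_\alpha(m')\pmod{p^n}$ whenever $m\equiv m'\pmod{p^n}$; this is precisely the congruence \eqref{D} in the case $\ub=\underline{1}$, $s=1$, established by Asakura \cite[Section 3]{As}. Granting it, $R_m\equiv R_{m'}\pmod{p^n}$ follows formally: $P$ and $B$ have integer coefficients, so $m\equiv m'$ together with $r_\alpha(m)\equiv r_\alpha(m')\pmod{p^n}$ forces $P(\cdots m\cdots)\equiv P(\cdots m'\cdots)$ and $B_m\equiv B_{m'}\pmod{p^n}$, and inverting the units $B_m,B_{m'}$ preserves the congruence.

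The step I expect to be the genuine obstacle is the single-parameter continuity of $r_\alpha$: there the first-order estimate \eqref{i} only pins $r_\alpha(m)$ down modulo $p^{v_p(m)}$ (cf. \eqref{iii}) and is insufficient once $n>v_p(m)$, so one must invoke the finer second-order analysis of \cite[Section 3]{As}, controlled through the continuity \eqref{eq:6} of $\widetilde{\psi}_p$. By contrast, the new content here---the presence of $\ub$---contributes nothing analytically: it enters only through the integer-coefficient polynomials $A,B,P$, which is exactly why the possible non-integrality of $C_m/C^{(1)}_{\lfloor m/p\rfloor}$ (the obstruction to the method of \cite{As} flagged in the introduction) never intervenes in this argument.
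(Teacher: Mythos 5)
Your proof is correct, but it takes a genuinely different route from the paper's. The paper does not reduce to the single-parameter case: it writes $1-m\,D_m/C_m=\prod_{r=1}^{s}\{b_r\}_m/\{a_r\}_m$ (with $b_s:=1$, via Lemma \ref{lemma:2}(i)), reduces to $m'=m+p^n$, and then redoes the second-order analysis directly in the multi-parameter setting: the multiplicativity $\{\alpha\}_{kp^i+p^n}=\{\alpha\}_{kp^i}\{\alpha+kp^i\}_{p^n}$, the estimate \eqref{i}, the continuity \eqref{eq:6} of $\widetilde{\psi}_p$ (with a separate argument for $(p,i)=(2,1)$), and finally a bootstrap via \eqref{iii} to kill the error term $p^n\bigl(D_0-D_{m'}/C_{m'}\bigr)$ in \eqref{eq:12}. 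Your universal-polynomial device ($u_m=\prod_i h_{a_i}(m)\big/\prod_j h_{b_j}(m)$, $B-A=mP$ with $P\in\Z[m,\underline{x},\underline{y}]$, $B_m$ a unit) instead imports the analytic core --- your $r_\alpha$-continuity, i.e.\ \eqref{D} for $s=1$ --- as a black box from \cite{As}; the factorization itself is right (the factor $(1)_m/(1)_{m/p}$ occurs $s$ times upstairs and $s-1$ times downstairs, leaving exactly one copy), and the formal congruence arithmetic with $P_m,B_m\in\Z_p$, $B_m\equiv 1\pmod p$ is sound. What your approach buys is modularity: it makes transparent that general $\ub$ contributes only integer-coefficient algebra, and it correctly isolates where the real work lives; what it loses is self-containedness, since the same \eqref{i}--\eqref{eq:6}--\eqref{iii} mechanism (including the $p=2$ subtlety) is hidden in the citation rather than carried out. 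Two small points you should patch: (a) the boundary case $m'=0$ (or $m=0$), where $R_0=D_0$ and the identity $R_m=(1-u_m)/m$ degenerates --- it does extend, since $P(0,\underline{x},\underline{y})=\sum_i x_i-\sum_j y_j$, $B(0,\underline{y})=1$, and $D_0=\sum_i r_{a_i}(0)-\sum_j r_{b_j}(0)$ with $r_\alpha(0):=\psi_p(\alpha)+\gamma_p$, the continuity at $0$ being the single-parameter case of \eqref{iii}; (b) if one worries whether \cite{As} formally covers $s=1$, note that the $s=2$ instance $\ua=(\alpha,1)$, $\ub=(1)$ has literally the same $C_m$ and $D_m$ (since $1'=1$ and $\psi_p(1)=-\gamma_p$), so your citation is robust either way.
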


\begin{proof}
If $p \nmid m$, then $D_m/C_m=1/m$ and hence the assertion is obvious. Let $m_1 =kp^i$ with $i \geq 1$ and $p \nmid k$. It is enough to show the assertion in case $m_2=m_1+p^n$. If $n \leq i$, then 
$$\dfrac{D_{m_1}}{C_{m_1}} \equiv \dfrac{D_{m_2}}{C_{m_2}} \equiv D_0 \pmod{p^n} $$
by \eqref{iii}. 
Suppose $n > i$. Note that 
$$1-m\dfrac{D_m}{C_m} =\dfrac{C_{m/p}^{(1)}}{C_m}=\prod_{r=1}^{s}\dfrac{\{b_r\}_m}{\{a_r\}_m} $$
by Lemma \ref{lemma:2} (i). 
Here, we put $b_s:=1$. 
We have
\begin{align*}
1-m_2\dfrac{D_{m_2}}{C_{m_2}}   
&= \prod_r\dfrac{\{b_r\}_{kp^i+p^n}}{\{a_r\}_{kp^i+p^n}}
= \prod_r\dfrac{\{b_r\}_{kp^i}}{\{a_r\}_{kp^i}}\dfrac{\{b_r+kp^i\}_{p^n}}{\{a_r+kp^i\}_{p^n}}\\
&\equiv \left(1-m_1\dfrac{D_{m_1}}{C_{m_1}} \right) \prod_r \dfrac{\{b_r+kp^i\}_{p^n}}{\{a_r+kp^i\}_{p^n}}  \\
&\equiv \left(1-m_1\dfrac{D_{m_1}}{C_{m_1}} \right) \prod_r(1-p^n(\psi_p(a_r+kp^i)-\psi_p(b_r+kp^i))) \pmod{p^{2n}}. 
\end{align*}
The last congruence follows from Lemma \ref{lemma:2} (i) and \eqref{i}. 
For $(p, i) \neq (2, 1)$, the congruence
\begin{align}
\psi_p(a_r+kp^i)-\psi_p(b_r+kp^i) \equiv \psi_p(a_r)-\psi_p(b_r) \pmod{p^i} \label{eq:7}
\end{align}
follows from (\ref{eq:6}).  
On the other hand, for $z \in \Z_2$, we have  
$$\psi_2(z+2)-\psi_2(z) \equiv 1 \pmod{2}$$
(cf. \cite[Theorem 2.6 (3)]{As}), 
hence the congruence (\ref{eq:7}) is also correct in the case $(p, i)=(2, 1)$. 
It concludes that 
\begin{align*}
1-m_2\dfrac{D_{m_2}}{C_{m_2}} \equiv \left(1-m_1\dfrac{D_{m_1}}{C_{m_1}} \right)(1-p^nD_0) \pmod{p^{n+i}}. 
\end{align*}
Therefore, we have
 \begin{align}
kp^i\left(\dfrac{D_{m_2}}{C_{m_2}}-\dfrac{D_{m_1}}{C_{m_1}}\right) \equiv -p^n\dfrac{D_{m_2}}{C_{m_2}}+p^nD_0=p^n\left(D_0-\dfrac{D_{m_2}}{C_{m_2}}\right)\pmod{p^{n+i}}  \label{eq:12}
 \end{align}
We note that $m_2=kp^i+p^n=p^i(k+p^{n-i})$. 
By \eqref{iii}, we have 
$$\dfrac{D_{m_2}}{C_{m_2}} \equiv D_0 \pmod{p^i}$$
hence the right-hand side of (\ref{eq:12}) vanishes. This proves the lemma.  
\end{proof}

\begin{lem} \label{lm}
Put $S_m=\sum_{i+j=m}(C_{i+p^n}D_{j}-C_{i}D_{j+p^n})$ for $m \in \mathbb{Z}_{\geq 0}$. 
Then we have 
$$S_m \equiv \sum_{i+j=m}(C_{i+p^n}C_{j}-C_iC_{j+p^n})\dfrac{D_j}{C_j} \pmod{p^n}.$$
\end{lem}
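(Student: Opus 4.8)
The plan is to compute the difference of the two sides directly and check that it is divisible by $p^n$ termwise; the guiding idea is to trade the coefficients $D_j$ for the ratios $D_j/C_j$, since it is precisely these ratios whose behaviour under $j \mapsto j+p^n$ is controlled by Lemma \ref{lemma:19}.

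First I would rewrite the right-hand side using the trivial identity $C_j\cdot(D_j/C_j)=D_j$ (note $C_0=1$, so every $D_j/C_j$ makes sense). This gives
\[
\sum_{i+j=m}(C_{i+p^n}C_j-C_iC_{j+p^n})\frac{D_j}{C_j}
=\sum_{i+j=m}\left(C_{i+p^n}D_j-C_iC_{j+p^n}\frac{D_j}{C_j}\right).
\]
Subtracting this from $S_m=\sum_{i+j=m}(C_{i+p^n}D_j-C_iD_{j+p^n})$, the terms $C_{i+p^n}D_j$ cancel, and writing also $D_{j+p^n}=C_{j+p^n}\cdot(D_{j+p^n}/C_{j+p^n})$ I obtain the clean identity
\[
S_m-\sum_{i+j=m}(C_{i+p^n}C_j-C_iC_{j+p^n})\frac{D_j}{C_j}
=\sum_{i+j=m}C_iC_{j+p^n}\left(\frac{D_j}{C_j}-\frac{D_{j+p^n}}{C_{j+p^n}}\right).
\]

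It then remains to see the right-hand side vanishes modulo $p^n$. Since $j\equiv j+p^n\pmod{p^n}$, Lemma \ref{lemma:19} gives $\frac{D_j}{C_j}\equiv\frac{D_{j+p^n}}{C_{j+p^n}}\pmod{p^n}$, so each factor in parentheses lies in $p^nW$. The remaining factors $C_i$ and $C_{j+p^n}$ are $p$-adic integers, because the conditions (i) and (ii) of Theorem \ref{theorem:1} force $F_{\ua,\ub}(t)\in\mathbb{Z}_p[[t]]$, hence every coefficient $C_n$ lies in $\mathbb{Z}_p\subset W$. Consequently each summand lies in $p^nW$, and the asserted congruence follows summand by summand, with no cancellation across the sum required.

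The only delicate point — and the reason this reformulation is worthwhile — is to keep straight which quantities are integral. As recorded in the remark after Corollary \ref{lemma:17}, the ratios $C_m/C^{(1)}_{\lfloor m/p\rfloor}$ are generically \emph{not} $p$-adic integers, so one cannot directly estimate the shifted differences of the $D$'s. The manoeuvre above sidesteps this: it isolates the integral ratios $D_j/C_j$ (integral by Lemma \ref{lemma:1}), to which Lemma \ref{lemma:19} applies, while the genuinely integral coefficients $C_i,C_{j+p^n}$ serve only as harmless multiplicative factors. I expect the sole subtlety to be verifying these integrality inputs carefully (including the boundary case $j=0$, where Lemma \ref{lemma:19} is applied with $m=0$, $m'=p^n$); the algebraic identity itself is routine.
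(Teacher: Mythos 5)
Your proof is correct and is essentially the paper's own argument: both hinge on writing $D_{j+p^n}=C_{j+p^n}\cdot\frac{D_{j+p^n}}{C_{j+p^n}}$ and invoking Lemma \ref{lemma:19} (together with the integrality of $C_i$, $C_{j+p^n}$ and of the ratios $D_j/C_j$ from Lemma \ref{lemma:1}) to replace $\frac{D_{j+p^n}}{C_{j+p^n}}$ by $\frac{D_j}{C_j}$ modulo $p^n$. Phrasing this as an exact identity for the difference of the two sides, rather than as the paper's chain of congruences, is only a cosmetic variation.
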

\begin{proof}
By Lemma \ref{lemma:19}, we have
\begin{align*}
S_m &= \sum_{i+j=m}\left(C_{i+p^n}D_{j}-C_{i} C_{j+p^n} \dfrac{D_{j+p^n}}{C_{j+p^n}}\right)\\
&\equiv \sum_{i+j=m} \left(C_{i+p^n}D_{j}-C_{i} C_{j+p^n} \dfrac{D_{j}}{C_{j}}\right)  \pmod{p^n} \\
&=  \sum_{i+j=m}(C_{i+p^n}C_{j}-C_iC_{j+p^n})\dfrac{D_j}{C_j}, 
\end{align*}
which finishes the proof. 
\end{proof}

The following lemma is a slight modification of \cite[Lemma 3.12]{As}. 

\begin{lem}  \label{keylem}
For all $m, k, s \in \mathbb{Z}_{\geq 0}$ and $0 \leq l \leq n$, we have
\begin{align} \label{key}
\sum_{\substack{i+j=m\\ i \equiv k \ {\rm mod} \ {p^{n-l}}}} \left(C_iC_{j+p^{n}}-C_jC_{i+p^{n}} \right) \equiv 0 \pmod{p^{l+1}}.
\end{align}
\end{lem}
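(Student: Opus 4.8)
The plan is to fix $n$ and $m$ and argue by induction on $l$, $0 \le l \le n$, keeping the shift $p^n$ attached to the second factor throughout. Write $E_{ij} = C_i C_{j+p^n} - C_j C_{i+p^n}$ for the summand and $T(k,l) = \sum_{i+j=m,\ i \equiv k \ (p^{n-l})} E_{ij}$ for the left-hand side of \eqref{key}. Two structural facts drive the argument: first, $E_{ij} = -E_{ji}$, so that over the full line $i+j=m$ (the case $l = n$, where the congruence condition is vacuous) the sum is identically $0$, which secures the top case for free; second, the refinement identity $T(k,l) = \sum_{t=0}^{p-1} T(k + t p^{n-l},\, l-1)$, which expresses a sum with congruence modulus $p^{n-l}$ as the sum of its $p$ finer siblings having modulus $p^{n-l+1}$.

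First I would establish the base case $l=0$ (that is, \eqref{key} modulo $p$) from the mod-$p$ Lucas-type factorization of the coefficients $C_N$ implicit in Corollary \ref{maincor}, together with the antisymmetry. For the inductive step, assuming $T(\kappa, l-1) \equiv 0 \pmod{p^l}$ for every $\kappa$, the refinement identity reduces the claim to showing that the $p$ siblings $T(k + tp^{n-l}, l-1)$ all coincide modulo $p^{l+1}$: if they share a common value $V$ modulo $p^{l+1}$, then since $V \equiv 0 \pmod{p^l}$ the refined sum is $\equiv pV \equiv 0 \pmod{p^{l+1}}$. To compare two siblings I would pair the index $i$ in one class with $i + p^{n-l}$ in the next (and correspondingly $j$ with $j - p^{n-l}$) and control each paired difference of $E$'s. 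Here the essential input is Dwork's congruence in the product form of Corollary \ref{lemma:17}: for indices congruent modulo $p^{n-l}$ it yields $C_a\, C^{(1)}_{\lfloor b/p \rfloor} \equiv C_b\, C^{(1)}_{\lfloor a/p \rfloor} \pmod{p^{n-l}}$, which is precisely what converts a shift of the index by $p^{n-l}$ into a controlled change of $E_{ij}$. It is crucial to use this \emph{product} form rather than the ratio $C_a/C^{(1)}_{\lfloor a/p \rfloor}$, since by the remark following Corollary \ref{lemma:17} the latter is generically not a $p$-adic integer when $\ub \neq \underline{1}$; this is exactly where the method must depart from the original proof of \cite[Lemma 3.12]{As}, and it is the reason Lemma \ref{keylem} replaces the ratio congruence \eqref{C} used in loc. cit.

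The main obstacle will be the interplay between the precision $p^{n-l}$ delivered by Corollary \ref{lemma:17} in a fixed residue class and the target precision $p^{l+1}$: when $2l < n$ the former dominates and the siblings visibly agree to the required order, but as $l$ approaches $n$ one must instead lean on the exact antisymmetry and on the cases already secured at smaller $l$, so that the two regimes together cover all $0 \le l \le n$. Carrying the $C^{(1)}$ factors through the antisymmetrized difference so that they cancel in pairs, and tracking the single extra power of $p$ gained at each step, is the technical heart of the argument; once it is in place the induction closes and \eqref{key} follows for all $l$.
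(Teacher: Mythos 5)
Your outer scaffolding is fine: antisymmetry disposing of the case $l=n$, the refinement identity $T(k,l)=\sum_{t=0}^{p-1}T(k+tp^{n-l},\,l-1)$, and the base case $l=0$ via the mod-$p$ Lucas-type factorization $C_w\equiv C_{w\,\mathrm{mod}\,p^n}\,C^{(n)}_{\lfloor w/p^n\rfloor}\pmod p$ (which follows from Theorem \ref{theorem:1} at level $n=1$, iterated --- not really from Corollary \ref{maincor}) do all work. The genuine gap is the inductive step. You reduce everything to the claim that the $p$ siblings $T(k+tp^{n-l},\,l-1)$ share a \emph{common} value modulo $p^{l+1}$, and this claim is neither proved nor plausibly provable with the tool you name. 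What the conclusion actually requires is only that the siblings' leading terms (each sibling being $\equiv 0 \bmod p^l$ by hypothesis) sum to zero mod $p$; pairwise equality mod $p^{l+1}$ is a far stronger structural assertion, and nothing in the Dwork congruences suggests it. Moreover, Corollary \ref{lemma:17} applied to indices congruent modulo $p^{n-l}$ delivers congruences only modulo $p^{n-l}$, which falls short of the target $p^{l+1}$ as soon as $2l\geq n$; you acknowledge this regime but offer only the instruction to ``lean on antisymmetry and the cases at smaller $l$,'' which is not an argument. Even in the favorable range $2l<n$, the product form $C_aC^{(1)}_{\lfloor b/p\rfloor}\equiv C_bC^{(1)}_{\lfloor a/p\rfloor}$ compares $C_a$ with $C_b$ only after multiplication by $C^{(1)}$-coefficients that are $p$-adic integers but typically non-units; since $XY\equiv 0$ with $p\mid Y$ gives no information about $X$, the ``cancellation in pairs'' of these multipliers that you defer as the technical heart cannot in general be carried out at the coefficient level. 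This is precisely the obstruction recorded in the remark after Corollary \ref{lemma:17}, and it does not disappear by trading ratios for products so long as one works coefficient by coefficient.

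The paper avoids all of this by never comparing individual coefficients. It observes that \eqref{key} is equivalent to the generating-series congruence \eqref{key2},
\begin{align*}
F_k(t)\cdot[F_{m-k}(t)]_{<p^n}\equiv [F_k(t)]_{<p^n}\cdot F_{m-k}(t) \pmod{p^{l+1}},
\qquad F_k(t)=\sum_{i\equiv k\ \mathrm{mod}\ p^{n-l}}C_it^i,
\end{align*}
and then runs the induction of \cite[Lemma 3.12]{As} (with $(d,j)=(n,m-k)$ there), whose only input is the series-level Dwork congruence $F^{(i)}_{\ua,\ub}(t)/F^{(i+1)}_{\ua,\ub}(t^p)\equiv [F^{(i)}_{\ua,\ub}(t)]_{<p^n}/[F^{(i+1)}_{\ua,\ub}(t^p)]_{<p^n}\pmod{p^n}$ from Theorem \ref{theorem:1}; the slicing interacts with $t\mapsto t^p$ through base-$p$ digits, and no division by, or cancellation of, individual $C^{(1)}$-coefficients ever occurs. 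To close your argument you would have to replace the unproven sibling-equality step by this series-level mechanism --- at which point you have reproduced the paper's proof rather than found an alternative to it.
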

\begin{proof}
When $l=n$, the lemma is obvious since the left-hand side of \eqref{key} is 
$$\sum_{i+j=m} \left(C_iC_{j+p^{n}}-C_jC_{i+p^{n}} \right)=0. $$ 
Suppose that $0 \leq l \leq n-1$. 
For $k \in \Z_{\geq 0}$, we put
$$F_k(t)= \sum_{i \equiv k \ {\rm mod}\ {p^{n-l}}}C_i t^i. $$
Then \eqref{key} is equivalent to 
\begin{align} \label{key2}
F_k(t) \cdot [F_{m-k}(t)]_{<p^n} \equiv [F_k(t)]_{<p^n} \cdot F_{m-k}(t) \pmod{p^{l+1}}. 
\end{align} 
Since we have the congruence 
$$\dfrac{F^{(i)}_{\ua, \ub}(t)}{F^{(i+1)}_{\ua, \ub}(t^p)} \equiv \dfrac{[F^{(i)}_{\ua, \ub}(t)]_{<p^n}}{[F^{(i+1)}_{\ua, \ub}(t^p)]_{<p^n}} \pmod{p^n}$$
by Theorem \ref{theorem:1}, 
we can prove \eqref{key2} similarly as in the proof of \cite[Lemma 3.12]{As}, considering the case $(d, j)=(n, m-k)$ in loc. cit.  
\end{proof}

\subsection{Proof of Theorem \ref{main:1}}
We finish the proof of Theorem \ref{main:1}. 
We note that the statement of Theorem \ref{main:1} is equivalent to 
$$S_m \equiv 0 \pmod{p^n}$$
for all $m \geq 0$. 
Put $q_k=D_k/C_k$. 
By Lemma \ref{lm} and Lemma \ref{lemma:19}, we have
$$S_m \equiv \sum_{k=0}^{p^n-1}q_k \overbrace{\sum_{\substack{i+j=m\\j\equiv k \ {\rm mod} \ p^n}} (C_{i+p^n}C_j-C_iC_{j+p^n})}^{(*)} \pmod{p^n}.$$
It follows from Lemma \ref{keylem} that  $(*)$ is zero modulo ${p}$. Therefore, again by Lemma \ref{lemma:19}, one can rewrite  
$$S_m \equiv \sum_{k=0}^{p^{n-1}-1}q_k \overbrace{\sum_{\substack{i+j=m\\j\equiv k \ {\rm mod} \ p^{n-1}}} (C_{i+p^n}C_j-C_iC_{j+p^n})}^{(**)} \pmod{p^n}.$$
It follows from Lemma \ref{keylem} that  $(**)$ is zero modulo ${p^2}$. Therefore, again by Lemma \ref{lemma:19}, one can rewrite  
 $$S_m \equiv \sum_{k=0}^{p^{n-2}-1}q_k \sum_{\substack{i+j=m\\j\equiv k \ {\rm mod} \ p^{n-2}}} (C_{i+p^n}C_j-C_iC_{j+p^n}) \pmod{p^n}.$$
 Continuing the same discussion, one finally obtains 
$$S_m \equiv \sum_{i+j=m} (C_{i+p^n}C_j-C_iC_{j+p^n}) =0 \pmod{p^n}, $$
which finishes the proof.

\section{Special values at $t=1$} \label{special_value}
In this section, we give numerical computations of the special value of $\mathscr{F}^{(\sigma)}_{a_1, a_2; b}(t)$ at $t=1$ 
and prove Theorem \ref{main:3}. 
In Section \ref{proof2}, we prove that we can define the special value at $t=1$ under some assumptions, and give numerical computations modulo $p^4$.  
In Section \ref{HG}, we recall some properties of hypergeometric curves and the relation between $p$-adic regulators of them and $p$-adic hypergeometric functions of logarithmic type, which is proved by Asakura \cite{As}. 
In Section \ref{preg}, we compute p-adic regulators of hypergeometric curves, which differ from Asakura's, as we use a different Frobenius.  
In Section \ref{proof3}, we prove Theorem \ref{main:3} by comparing $p$-adic regulators of a hypergeometric curve.

\subsection{Numerical computations} \label{proof2}
\begin{thm} \label{main:2}
Let $N$ be a positive integer and $p$ be a prime such that $N \mid p-1$. 
Let $i, j, k \in \{1, \ldots, N\}$ be integers such that $i+j \leq k$.
Then we can define the special value $\mathscr{F}_{\frac{i}{N}, \frac{j}{N};\frac{k}{N}}^{(\sigma)}(1)$ 
and we have some numerical computations $\mathscr{F}_{\frac{i}{N}, \frac{j}{N};\frac{k}{N}}^{(\sigma)}(1) \pmod{p^4}$ for $N=2, 3, 4, 5, 6$ and $p=3, 5, 7, 11, 13$ as Table \ref{numerical}.  
\begin{table}
 \caption{The numerical computations of $\mathscr{F}_{\frac{i}{N}, \frac{j}{N};\frac{k}{N}}^{(\sigma)}(1) \pmod{p^4}$}  \label{numerical}
\scalebox{1}[1]
{
\begin{tabular}{cccccc} 
\hline
$\overset{}(p, N, i, j, k)$ & $\mathscr{F}_{\frac{i}{N}, \frac{j}{N};\frac{k}{N}}^{(\sigma)}(1) \pmod{p^4}$ &  $\underset{}(p, N, i, j, k)$ & $\mathscr{F}_{\frac{i}{N}, \frac{j}{N};\frac{k}{N}}^{(\sigma)}(1)\pmod{p^4}$  \\ \toprule
$(\overset{}{3, 2, 1, 1, 2})$ & $0$ &  $(11, 5, 1, 2, 4)$ & $12680$  \\
$(\overset{}{5, 2, 1, 1, 2})$ & $0$ &  $(11, 5, 1, 2, 5)$ & $2926$\\
$(\overset{}{5, 4, 1, 1, 2})$ & $0$ &   $(11, 5, 1, 3, 4)$ & $0$ \\
$(\overset{}{5, 4, 1, 1, 3})$ & $131$ & $(11, 5, 1, 3, 5)$ & $180$ \\
$(\overset{}{5, 4, 1, 1, 4})$ & $94$  &  $(11, 5, 1, 4, 5)$  & $0$  \\
$(\overset{}{5, 4, 1, 2, 3})$ & $0$  &  $(11, 5, 2, 2, 4)$ & $0$  \\
$(\overset{}{5, 4, 1, 2, 4})$ & $604$  & $(11, 5, 2, 2, 5)$& $10991$   \\
$(\overset{}{5, 4, 1, 3, 4})$ & $0$  &   $(11, 5, 2, 3, 5)$& $0$\\
$(\overset{}{7, 2, 1, 1, 2})$ & $0$  & $(13, 2, 1, 1, 2)$ & $0$ \\
$(\overset{}{7, 3, 1, 1, 2})$ & $0$  &   $(13, 3, 1, 1, 2)$ & $0$ \\
$(\overset{}{7, 3, 1, 1, 3})$ & $290$  &   $(13, 3, 1, 1, 3)$ & $18112$\\
$(\overset{}{7, 3, 1, 2, 3})$ & $0$  &  $(13, 3, 1, 2, 3)$ & $0$ \\
$(\overset{}{7, 6, 1, 1, 2})$ &  $0$  &   $(13, 4, 1, 1, 2)$ & $0$ \\
$(\overset{}{7, 6, 1, 1, 3})$ &  $985$  &  $(13, 4, 1, 1, 3)$ & $24856$ \\
$(\overset{}{7, 6, 1, 1, 4})$ &  $831$  &  $(13, 4, 1, 1, 4)$ & $19301$ \\
$(\overset{}{7, 6, 1, 1, 5})$ &  $1058$  &  $(13, 4, 1, 2, 3)$ & $0$ \\
$(\overset{}{7, 6, 1, 1, 6})$ &  $481$  &  $(13, 4, 1, 2, 4)$ & $1084$ \\
$(\overset{}{7, 6, 1, 2, 3})$ &  $0$  &  $(13, 4, 1, 3, 4)$ & $0$\\
$(\overset{}{7, 6, 1, 2, 4})$ &  $1926$  &  $(13, 6, 1, 1, 2)$ & $0$ \\
$(\overset{}{7, 6, 1, 2, 5})$ &  $1571$  &  $(13, 6, 1, 1, 3)$ & $13217$ \\
$(\overset{}{7, 6, 1, 2, 6})$ &  $1678$  &  $(13, 6, 1, 1, 4)$ & $11029$ \\
$(\overset{}{7, 6, 1, 3, 4})$ &  $0$  & $(13, 6, 1, 1, 5)$ & $1195$ \\
$(\overset{}{7, 6, 1, 3, 5})$ &  $1616$  &  $(13, 6, 1, 1, 6)$ & $14792$ \\
$(\overset{}{7, 6, 1, 3, 6})$ &  $1869$  & $(13, 6, 1, 2, 3)$ & $0$\\
$(\overset{}{7, 6, 1, 4, 5})$ &  $0$  & $(13, 6, 1, 2, 4)$ & $21091$ \\
$(\overset{}{7, 6, 1, 4, 6})$ &  $324$  &  $(13, 6, 1, 2, 5)$ & $7884$ \\
$(\overset{}{7, 6, 1, 5, 6})$ &  $0$  & $(13, 6, 1, 2, 6)$ & $7433$ \\
$(\overset{}{7, 6, 2, 3, 5})$ &  $0$  & $(13, 6, 1, 3, 4)$ & $0$ \\
$(\overset{}{7, 6, 2, 3, 6})$ &  $2160$  & $(13, 6, 1, 3, 5)$ & $19795$ \\
$\overset{} (11, 2, 1, 1, 2)$ &  $0$  & $(13, 6, 1, 4, 5)$ & $0$  \\
$\overset{} (11, 5, 1, 1, 2)$ & $0$  & $(13, 6, 1, 4, 6)$ & $20137$ \\
$\overset{} (11, 5, 1, 1, 3)$  & $4469$  & $(13, 6, 1, 5, 6)$ & $0$ \\
$\overset{} (11, 5, 1, 1, 4)$  & $2709$  & $(13, 6, 2, 3, 5)$ & $0$   \\
$\overset{} (11, 5, 1, 1, 5)$   & $3590$  & $(13, 6, 2, 3, 6)$ & $11998$  \\
$\overset{} (11, 5, 1, 2, 3)$  & $0$  &  \\
 \end{tabular}
 }
 \end{table}
\end{thm}
\begin{rmk}
Some values for $(p, N, 1, N-1, N)$ in Table \ref{numerical} have already been computed by Kayaba in his unpublished master's thesis \cite{Kayaba}.  
\end{rmk}

\begin{proof}[Proof of Theorem \ref{main:2}]
By the assumption $i+j \leq k$, 
$\ua=\left(\frac{i}{N}, \frac{j}{N} \right)$ and $\ub=\left( \frac{k}{N} \right)$ satisfy the conditions (i) and (ii) in Theorem \ref{theorem:1}. 
Let us show $|h_{\ua, \ub}(1)|_p=1$. 
By Theorem \ref{theorem:1} and the assumption $N \mid p-1$, it suffices to show that 
$$[F_{\ua, \ub}(t)]_{<p}|_{t=1} \not \equiv 0 \pmod{p}. $$ 
Let $i_0, j_0, k_0 \in \{0, 1, \ldots, p-1\}$ be the integers such that $i/N \equiv -i_0$, $j/N \equiv -j_0$ and $k/N \equiv -k_0 \pmod{p}$.  
Then we have 
$$[F_{\ua, \ub}(t)]_{<p}=\sum_{n=0}^{p-1}\dfrac{(\frac{i}N)_n  (\frac{j}N)_n}{(\frac{k}N)_n  (1)_n}t^n \equiv \sum_{n=0}^{p-1}\dfrac{(-i_0)_n  (-j_0)_n}{(p-k_0)_n  (1)_n}t^n \pmod{p}. $$
Since $-i_0$ and $-j_0$ are non-positive integers greater than $-p$, we have 
$$\sum_{n=0}^{p-1}\dfrac{(-i_0)_n  (-j_0)_n}{(p-k_0)_n  (1)_n}t^n =\sum_{n=0}^{\infty}\dfrac{(-i_0)_n  (-j_0)_n}{(p-k_0)_n  (1)_n}t^n={_{2}F_{1}}\left( 
\begin{matrix}
-i_0, -j_0 \\
p-k_0
\end{matrix}
; t
\right). $$ 
By Gauss's formula (cf \cite[1.1.5]{Slater}), we have 
\begin{align*}
{_{2}F_{1}}\left( 
\begin{matrix}
-i_0, -j_0 \\
p-k_0
\end{matrix}
; 1
\right)
&=
\dfrac{\Gamma(p-k_0) \Gamma(p-k_0 + i_0 + j_0)}{\Gamma(p-k_0 + i_0) \Gamma(p-k_0+j_0)} \\
&= \dfrac{(p-k_0-1)! (p-k_0+i_0+j_0-1)!}{(p-k_0+i_0-1)! (p-k_0+j_0-1)!} \\
& \not \equiv 0 \pmod{p}, 
\end{align*}
hence $[F_{\ua, \ub}(t)]_{<p}|_{t=1} \not \equiv 0 \pmod{p}$.  
Therefore, we can define the special value 
\begin{align*} 
\mathscr{F}_{\frac{i}{N}, \frac{j}{N};\frac{k}{N}}^{(\sigma)}(1)=\lim_{n \to \infty} \left(\left. \dfrac{[G_{\frac{i}{N}, \frac{j}{N}; \frac{k}{N}}(t)]_{<p^n}}{[F_{\frac{i}{N}, \frac{j}{N}; \frac{k}{N}}(t)]_{<p^n}} \right|_{t=1} \right). 
\end{align*}
The command of Mathematica for $\mathscr{F}_{a, b; c}^{(\sigma)}(1) \pmod{p^4}$ is as follows: 
\begin{figure}[H]
\centering
\includegraphics[width=13cm]{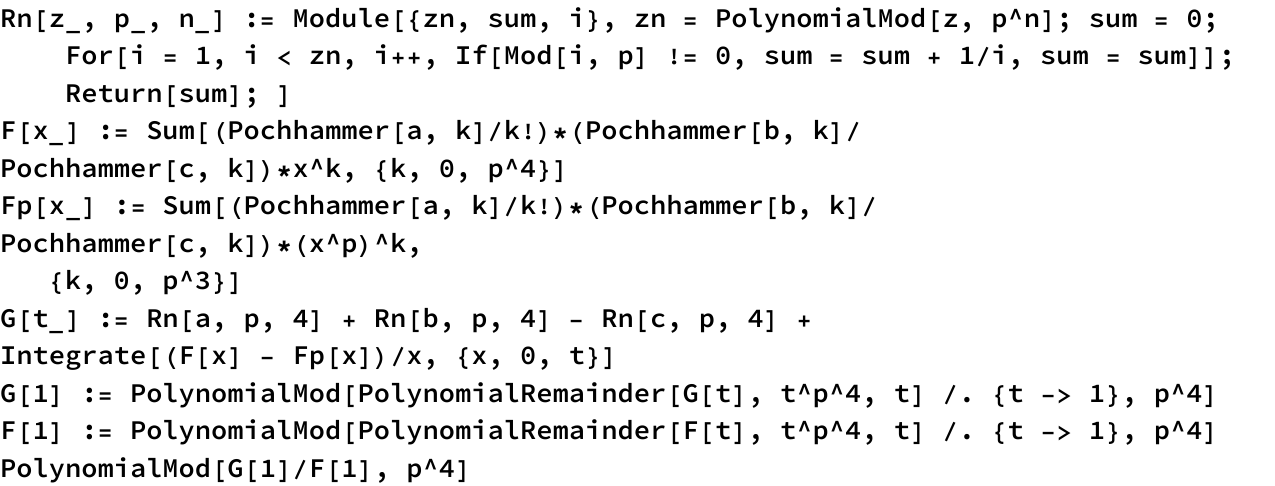}
\end{figure}
\end{proof}

\subsection{Properties of hypergeometric curves} \label{HG}
For a commutative ring $A$, we denote the projective line over $A$ with homogeneous coordinate $(Z_0, Z_1)$ by $\mathbb{P}^1_A(Z_0, Z_1)$.
Let $W=W(\overline{\mathbb{F}}_p)$ be the Witt ring of $\F$ and $T=W[t, (t-t^2)^{-1}]$.  
Let $N \geq 2$ be an integer and $p$ a prime greater than $N$.  
Let $X$ be a projective scheme over $T$ whose affine equation is 
$$(1-x^N)(1-y^N)=t. $$
This is called a \textit{hypergeometric curve} over $T$. 
\begin{lem}[{\cite[Lemma 4.1]{As}, \cite[Section 2.7]{AO}}] \label{proj}
The morphism $X \to \Spec T$ extends to a projective flat morphism 
\begin{align*} 
f \colon Y \to \mathbb{P}^1_W=\mathbb{P}^1_W(T_0, T_1), \quad t=T_0/T_1
\end{align*}
of a smooth projective $W$-scheme 
satisfying the following conditions. 
\begin{enumerate}
\item 
The fiber $f^{-1}(t=0)$ is a union of $2N$ rational curves 
$$x=\nu_1, \quad y = \nu_2, \quad (\nu_1, \nu_2 \in \mu_N), $$
intersecting each other transversally at $(\nu_1, \nu_2)$. 
\item The fiber $f^{-1}(t=1)$ is a union of the Fermat curve of degree $N$ and a rational curve with multiplicity $N$, intersecting each other transversally at $N$ points. 
\item The fiber $f^{-1}(t=\infty)$ is a union of two rational curves both with multiplicity $N$, intersecting each other transversally at one point. 
\end{enumerate}
\end{lem}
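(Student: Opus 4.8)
The plan is to build $Y$ by compactifying $X$ fibrewise, resolving the resulting rational map to $\mathbb{P}^1_W$, and then reading off the three special fibres from explicit local equations. Since $t=(1-x^N)(1-y^N)$ is determined by $x$ and $y$, the affine model of $X$ is simply the open subscheme of $\mathbb{A}^2_W=\Spec W[x,y]$ on which $(1-x^N)(1-y^N)$ and $1-(1-x^N)(1-y^N)$ are units, fibred over $\Spec T$ by $(x,y)\mapsto t$. I would therefore begin from the rational map
$$g\colon \mathbb{P}^1_W(X_0,X_1)\times_W\mathbb{P}^1_W(Y_0,Y_1)\dashrightarrow \mathbb{P}^1_W(T_0,T_1),\qquad [T_0:T_1]=\bigl[(X_1^N-X_0^N)(Y_1^N-Y_0^N):X_1^NY_1^N\bigr],$$
with $x=X_0/X_1$, $y=Y_0/Y_1$, which agrees with $f$ over $\Spec T$.

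Next I would resolve $g$ into a morphism. A direct computation shows that the base locus, where both bihomogeneous forms vanish, consists of the $2N+1$ points $(\nu_1,\infty)$, $(\infty,\nu_2)$ with $\nu_1,\nu_2\in\mu_N$, together with $(\infty,\infty)$. The hypothesis $p>N$ guarantees that $z^N-1$ is separable modulo $p$, so these are disjoint closed points defining sections over $W$; blowing them up, and iterating the blow-up of any base points of the induced map until it becomes a morphism, yields a scheme $Y$ that is smooth and projective over $W$, since $\mathbb{P}^1\times\mathbb{P}^1$ is smooth over $W$ and one only blows up regular centres. The map $f\colon Y\to\mathbb{P}^1_W$ has purely one-dimensional fibres over the regular base curve $\mathbb{P}^1_W$, and $Y$ is regular, hence Cohen--Macaulay, so $f$ is flat.

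It then remains to analyse the fibres at $t=0,1,\infty$ in local charts. Over $t=0$ the divisor $\{g=0\}$ is $\{\prod_{\nu_1}(x-\nu_1)\cdot\prod_{\nu_2}(y-\nu_2)=0\}$, i.e. the $2N$ reduced rulings $x=\nu_1$ and $y=\nu_2$, each rational and crossing transversally at the $(\nu_1,\nu_2)$, giving (i). Over $t=\infty$ the polar divisor of $g$ is $N\{x=\infty\}+N\{y=\infty\}$, two rational curves of multiplicity $N$ meeting at $(\infty,\infty)$, giving (iii). Over $t=1$ the affine equation $(1-x^N)(1-y^N)=1$ becomes, after setting $u=1/x$, $v=1/y$, the Fermat equation $u^N+v^N=1$, whose projective closure is the degree-$N$ Fermat curve; this curve passes through the base points $(\infty,\nu_2)$ and $(\nu_1,\infty)$, and tracking the exceptional divisors produced there shows that the remaining part of $f^{-1}(1)$ is a single rational curve of multiplicity $N$ meeting the Fermat curve transversally at $N$ points, giving (ii). It is often clearer to organise this step through the description of the generic fibre as a $\mu_N\times\mu_N$-cover of $\mathbb{P}^1_u$ branched at $u=0,1,t,\infty$, the three degenerations corresponding to the collision of the moving branch point $u=t$ with $u=0$, $u=1$, $u=\infty$.

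The main obstacle is precisely this last, local, step: carrying out the iterated blow-up at the $2N+1$ base points in explicit charts, proving that the total space remains regular over $W$ and not merely over the generic fibre, and pinning down the multiplicities of the exceptional components so as to produce the multiplicity-$N$ rational curves in (ii) and (iii). Here the assumption $p>N$ is indispensable, since it makes the cyclic covers $x^N=1-u$ and $y^N=1-v$ tamely ramified and keeps the model smooth in mixed characteristic; without it the special fibres would acquire wild ramification and extra singularities.
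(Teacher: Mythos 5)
The paper itself does not prove this lemma --- it is imported wholesale from \cite[Lemma 4.1]{As} and \cite[Section 2.7]{AO} --- so your attempt can only be measured against the construction in those references, and against internal consistency. Your starting point (compactify in $\mathbb{P}^1_W\times\mathbb{P}^1_W$, resolve the rational map $g$, use miracle flatness) is the right one, and your analysis of $f^{-1}(0)$ is correct. But the two places where the lemma has actual content both fail. First, the base locus: $(\infty,\infty)$ is \emph{not} a base point, since there $T_1=X_1^NY_1^N=0$ while $T_0=(-X_0^N)(-Y_0^N)\neq 0$, so $g$ is already defined at $(\infty,\infty)$ with value $t=\infty$; blowing it up as you propose would insert an exceptional curve of multiplicity $2N$ into $f^{-1}(\infty)$ and separate the two rulings, destroying (iii). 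Second, and more seriously, you read the fibre over $t=\infty$ off the \emph{unresolved} surface, but the statement concerns the resolved $Y$. At each genuine base point, say $(\infty,\nu_2)$ with local coordinates $u=1/x$, $w=y-\nu_2$, one has $t=(\mathrm{unit})\cdot w/u^N$, so one blow-up does not suffice: a chain of $N$ blow-ups is needed, and its exceptional curves $E_1,\dots,E_{N-1}$ lie in $f^{-1}(\infty)$ with multiplicities $N-1,\dots,1$, while $E_N$ is a horizontal section (recording the $2N$ points at infinity of the fibres). Hence in the surface your procedure actually produces, $f^{-1}(\infty)$ is not $N\{x=\infty\}+N\{y=\infty\}$; for $N=2$ it is a six-component fibre of type $I_1^*$. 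Your version of (iii), with the two components being the smooth strict transforms of the rulings meeting once, is even numerically impossible: flatness forces every fibre to have arithmetic genus $(N-1)^2$, whereas $D=NL_1+NL_2$ with $L_i$ smooth rational and $L_1\cdot L_2=1$ has $D^2=0$, $L_1^2+L_2^2=-2$, $D\cdot K=-2N$, so $p_a(D)=1-N$.

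The analysis at $t=1$ has a parallel gap. The closure of $\{(1-x^N)(1-y^N)=1\}$, i.e.\ the bidegree-$(N,N)$ curve $x^N+y^N=x^Ny^N$, is \emph{smooth} at the boundary points $(\infty,\nu_2)$, $(\nu_1,\infty)$ (locally $w=(\mathrm{unit})\,u^N$), so "tracking the exceptional divisors produced there" yields nothing of multiplicity $N$ in $f^{-1}(1)$: those exceptional curves lie over $t=\infty$ or are horizontal. The multiplicity-$N$ rational component of (ii) comes instead from a point you never consider: the curve has an ordinary $N$-fold point at the \emph{origin} $(x,y)=(0,0)$, with tangent cone $x^N+y^N=\prod_{\zeta^N=-1}(x-\zeta y)$ ($N$ distinct directions since $p>N$). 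In your $Y$ the fibre $f^{-1}(1)$ is therefore a reduced irreducible curve with an $N$-fold point, and (ii) fails; one must additionally blow up this interior point, whereupon the exceptional curve enters the fibre with multiplicity $N$ and meets the desingularized strict transform --- the Fermat curve $u^N+v^N=1$ of genus $(N-1)(N-2)/2$, via $u=1/x$, $v=1/y$ --- transversally at the $N$ points given by the tangent directions. That this blow-up is part of the intended model is visible in the present paper itself: in Section \ref{preg} the chart $U_1$ carries coordinates $(x,t)$ with $y=xt$, i.e.\ blow-up coordinates at the origin, and $\lambda=1-t_{\mathrm{orig}}$ factors there as $x^N(1+t^N-x^Nt^N)$, exhibiting $D_W=N\cdot E+F$. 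So while your overall scaffolding is sound, the proposal as written proves (i) only; (ii) misses the essential interior blow-up, and (iii) rests on a miscomputed base locus and on reading the fibre before resolution.
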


Let $K= \operatorname{Frac}{W}$ be the fractional field of $W$. 
Let $S=\Spec T\subset \mathbb{P}^1_W$.  
For a $W$-scheme $Z$, we write $Z_{K}=K \times_W Z$.   
The group $\mu_N \times \mu_N$ acts on $Y$ in the following way: 
$$[\zeta, \nu] \cdot (x, y, t):=(\zeta x, \nu y, t), \quad  (\zeta, \nu) \in \mu_N \times \mu_N. $$
For a $K$-vector space $V$ with an action of $\mu_N \times \mu_N$, let $V^{(i, j)}$ be the vector subspace on which $(\zeta, \mu)$ acts by multiplication $\zeta^i \nu^j$ for all $(\zeta, \nu) \in \mu_N \times \mu_N$. 
Then we have the eigendecomposition 
$$H_{\rm dR}^1(X_K / S_K) = \bigoplus_{i, j} H^1_{\rm dR} (X_K /S_K)^{(i, j)}, $$
where each eigenspace $H^1_{\rm dR} (X_K /S_K)^{(i, j)}$ is free of rank $2$ over $\mathscr{O}({S_K})$ (see \cite[Lemma 2.2]{As4}).   
 Put  
 \begin{align*}
 &a_i:= 1-\dfrac{i}{N}, \quad b_j:=1-\dfrac{j}{N}, \\
 &\omega_{i, j}:=N \dfrac{x^{i-1}y^{j-N}}{1-x^N}dx = -N \dfrac{x^{i-N}y^{j-1}}{1-y^N}dy, \\
 &\eta_{i, j}:=\dfrac{1}{x^N-1+t} \omega_{i, j} =Nt^{-1} x^{i-N} y^{j-N-1} dy
 \end{align*}
for integers $i$, $j$ such that $1 \leq i, j \leq N-1$. 
Then $\{\omega_{i, j}, \eta_{i, j}\}$ form an $\mathscr{O}(S_K)$-free basis of $H^1_{\rm dR} (X_K /S_K)^{(i, j)}$. 
Let 
$$F_{a_i, b_j;1}(t)={_{2}F_{1}}\left( 
\begin{matrix}
a_i, b_j \\
1
\end{matrix}
; t
\right)
=
\sum_{n=0}^{\infty}\dfrac{(a_i)_n  (b_j)_n}{(1)_n  (1)_n}t^n \in K[[t]]
$$
be a hypergeometric series. 
Put 
\begin{align*}
\tw_{i, j}:=\dfrac1{F_{a_i, b_j;1}(t)} \omega_{i, j}, \quad \widetilde{\eta}_{i, j}:=-t (1- t)^{a_i+b_j} (F'_{a_i, b_j;1}(t) \omega_{i, j} + b_j F_{a_i, b_j;1}(t) \eta_{i, j}). 
\end{align*}

Let $\sigma$ be the $p$th Frobenius on $W[t, (t-t^2)^{-1}]^{\dagger}$, the ring of overconvergent power series defined by $\sigma(t)=t^p$, which extends on $\mathscr{O}(S_K)^{\dagger}=K[t, (t-t^2)^{-1}]^{\dagger}:=K \otimes_W W[t, (t-t^2)^{-1}]^{\dagger}$. 
For an integer $r$, let $\mathscr{O}(S_K)^{\dagger}(r)$ be the Tate twist of $\mathscr{O}(S_K)^{\dagger}$, i.e. 
$\mathscr{O}(S_K)^{\dagger}(r)=\mathscr{O}(S_K)^{\dagger}$ as an $\mathscr{O}(S_K)$-module, and the $p$th Frobenius $\sigma$ acts on $\mathscr{O}(S_K)^{\dagger}(r)$ by $p^{-r} \sigma$. 

We consider the Milnor symbol 
\begin{align} \label{xi}
\xi= \xi(\nu_1, \nu_2):= \left \{\dfrac{x-1}{x-\nu_1}, \dfrac{y-1}{y-\nu_2} \right\} \in K_2^M(\mathscr{O}(X)), \quad \nu_1, \nu_2 \in \mu_N \backslash \{1\}. 
\end{align}
By \cite[Section 2.6]{AM}, we have a following exact sequence 
$$0 \to \mathscr{O}(S_K)^{\dagger}(2) \otimes_{\mathscr{O}(S_K)} H^1_{\rm dR} (X_K/S_K) \to \mathscr{O}(S_K)^{\dagger} \otimes_{\mathscr{O}(S_K)} M_{\xi}(X_K/S_K)  \to \mathscr{O}(S_K)^{\dagger} \to 0$$
endowed with 
\begin{itemize}
\item Frobenius $\Phi_{\sigma}$-action which is $\sigma$-linear, 
\item $\operatorname{Fil}^i \subset M_{\xi}(X_K/S_K) $ (Hodge filtration) with
$$\mathscr{O}(S_K)^{\dagger} \otimes_{\mathscr{O}(S_K)} \operatorname{Fil}^0M_{\xi}(X_K/S_K)  \xrightarrow{\simeq} \mathscr{O}(S_K)^{\dagger}. $$
\end{itemize}
In particular, there exists a unique lifting $e_{\xi}$ in $\mathscr{O}(S_K)^{\dagger} \otimes_{\mathscr{O}(S_K)} \operatorname{Fil}^0M_{\xi}(X_K/S_K)$ of $1 \in \mathscr{O}(S_K)^{\dagger}$. 
Then the element $e_{\xi} - \Phi_{\sigma} (e_{\xi})$ defines an element of $\mathscr{O}(S_K)^{\dagger}(2) \otimes_{\mathscr{O}(S_K)} H^1_{\rm dR} (X_K/S_K)$, which corresponds to the image of $\xi$ under the $p$-adic regulator (\cite[Proposition 4.3]{AM}).

Define $\e_{k, \sigma}^{(i, j)}(t)$ and $E_{k, \sigma}^{(i, j)}(t)$ for $k=1, 2$ by 
 \begin{align*}
 e_{\xi} -\Phi_{\sigma}(e_{\xi})
 &=\dfrac1{N^2} \sum_{i, j=1}^N (1 - \nu_1^{-i})(1- \nu_2^{-j}) [\e_{1, \sigma}^{(i, j)}(t) \omega_{i, j}+\e_{2, \sigma}^{(i, j)}(t) \eta_{i, j}] \\
 &=\dfrac1{N^2} \sum_{i, j=1}^N (1 - \nu_1^{-i})(1- \nu_2^{-j}) [E_{1, \sigma}^{(i, j)}(t) \tw_{i, j}+E_{2, \sigma}^{(i, j)}(t) \widetilde{\eta}_{i, j}]. 
 \end{align*}

\begin{thm}[{\cite[Theorem 4.8]{As}}]
We have 
$$\dfrac{E_{1, \sigma}^{(i, j)}(t)}{F_{a_i, b_j;1}(t)} = -\mathscr{F}^{(\sigma)}_{a_i, b_j; 1}(t). $$
\end{thm}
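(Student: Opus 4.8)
The plan is to first convert the claim into an identity about the coefficients in the algebraic basis $\omega_{i,j},\eta_{i,j}$, and then to determine those coefficients by a first-order differential equation coming from the Gauss--Manin connection together with the horizontality of the Frobenius $\Phi_\sigma$. From the definitions of $\tw_{i,j}$ and $\widetilde{\eta}_{i,j}$ one computes the base change
$$\omega_{i,j}=F_{a_i,b_j}(t)\,\tw_{i,j},\qquad \eta_{i,j}=-\frac{F'_{a_i,b_j}(t)}{b_j}\,\tw_{i,j}-\frac{1}{b_jF_{a_i,b_j}(t)\,t(1-t)^{a_i+b_j}}\,\widetilde{\eta}_{i,j}.$$
Comparing the two expansions of $e_\xi-\Phi_\sigma(e_\xi)$ coefficientwise in each eigenspace therefore gives
$$E_{1,\sigma}^{(i,j)}=\e_{1,\sigma}^{(i,j)}F_{a_i,b_j}-\frac{\e_{2,\sigma}^{(i,j)}}{b_j}F'_{a_i,b_j}.$$
Since $F_{(a_i,b_j),(1)}=F_{a_i,b_j}$, the asserted identity $E_{1,\sigma}^{(i,j)}/F_{a_i,b_j}=-\mathscr{F}_{a_i,b_j;1}^{(\sigma)}$ is equivalent to $E_{1,\sigma}^{(i,j)}=-G_{(a_i,b_j),(1)}(t)$, so it suffices to compute the pair $(\e_{1,\sigma}^{(i,j)},\e_{2,\sigma}^{(i,j)})$ and to assemble $E_{1,\sigma}^{(i,j)}$.

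To obtain the coefficients I would use the Gauss--Manin connection $\nabla$ on $H^1_{\rm dR}(X_K/S_K)^{(i,j)}$, whose matrix in the basis $\omega_{i,j},\eta_{i,j}$ is the hypergeometric connection attached to $(a_i,b_j)$ and can be taken from \cite{AO,As4}; the normalized forms $\tw_{i,j},\widetilde{\eta}_{i,j}$ are exactly the basis in which $\widetilde{\eta}_{i,j}$ is horizontal and $\nabla\tw_{i,j}$ is proportional to $\widetilde{\eta}_{i,j}$, which is why the solution $F_{a_i,b_j}$ of the Picard--Fuchs equation appears in their definition. The element $e_\xi$ is the unique lift of $1$ in $\mathrm{Fil}^0M_\xi(X_K/S_K)$, so $\nabla e_\xi$ lies in $\mathscr{O}(S_K)^{\dagger}(2)\otimes H^1_{\rm dR}$ and is given by the explicit $d\log$-expression attached to $\xi$ (the infinitesimal regulator of \cite{AM}). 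Because $\Phi_\sigma$ is horizontal,
$$\nabla\bigl(e_\xi-\Phi_\sigma(e_\xi)\bigr)=\nabla e_\xi-\Phi_\sigma(\nabla e_\xi),$$
an inhomogeneous linear first-order system for $(\e_{1,\sigma}^{(i,j)},\e_{2,\sigma}^{(i,j)})$ whose inhomogeneous term is the Frobenius defect of the infinitesimal regulator. Integrating this system is where the operator $\int_0^t(-)\tfrac{dt}{t}$ enters, and it produces precisely the term $\int_0^t(F_{a_i,b_j}(t)-F^{(1)}_{a_i,b_j}(t^{\sigma}))\tfrac{dt}{t}$ occurring in $G_{(a_i,b_j),(1)}$, thereby fixing $E_{1,\sigma}^{(i,j)}$ up to an additive constant in $W$.

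The main obstacle is the determination of this constant of integration, i.e.\ the constant term $\psi_p(a_i)+\psi_p(b_j)-\psi_p(1)+\gamma_p$ of $G_{(a_i,b_j),(1)}$ (here $c=1$, so $p^{-1}\log(c)=0$). I would fix it by analyzing the limiting behavior of $e_\xi-\Phi_\sigma(e_\xi)$ at the degenerate fiber $f^{-1}(t=0)$, which by Lemma \ref{proj}(i) is a union of $2N$ rational curves; using a tangential base point at $t=0$ and Coleman integration along the Frobenius, the boundary value of the regulator is governed by the restriction of $\xi$ to these components. The local expansions of the periods near $t=0$ involve $\log t$ with coefficients built from the derivatives of the hypergeometric exponents $a_i,b_j$, and these are exactly the limits defining the $p$-adic digamma values $\psi_p(a_i),\psi_p(b_j)$ and the Euler constant $\gamma_p$ in Definition \ref{definition:3}; invoking $\psi_p(1)=-\gamma_p$ from \cite[Theorem 2.6]{As} then matches the boundary value with the constant term of $-G_{(a_i,b_j),(1)}$. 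Equivalently, once the integral part is known one may finish by a rigidity argument: both $E_{1,\sigma}^{(i,j)}$ and $-G_{(a_i,b_j),(1)}$ satisfy the same first-order equation, so it is enough to compare their constant terms. The delicate points, and where error most easily creeps in, are the precise normalization of $\Phi_\sigma$ on the overconvergent functions and the compatibility of the tangential base point with the $(i,j)$-eigenspace decomposition, both of which must be tracked to pin the constant down exactly rather than merely up to a unit.
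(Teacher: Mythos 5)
Your proposal follows essentially the same route as the actual proof: this paper only quotes the statement from \cite[Theorem 4.8]{As}, but both that proof and the paper's own $\tau$-analogue (Theorems \ref{thm:frob} and \ref{de}) proceed exactly as you do --- rewrite $e_{\xi}-\Phi_{\sigma}(e_{\xi})$ in the normalized basis, apply $\nabla$ using $\nabla(e_{\xi})=-d\log(\xi)$ and horizontality of Frobenius to get a first-order equation whose integration produces the term $\int_0^t(F_{a_i,b_j}(t)-F^{(1)}_{a_i,b_j}(t^{\sigma}))\frac{dt}{t}$, and then pin down the constant at the degenerate fiber $t=0$. Your constant-of-integration step is phrased vaguely (``tangential base point, Coleman integration''), but it corresponds in substance to what the real argument does: compute the Frobenius on Deligne's canonical extension at $t=0$ via the log-crystalline comparison, with the unit normalization fixed by the residue map and the digamma constant term supplied by an input of the type \cite[Theorem 5.4]{AH}, exactly as in the proof of Theorem \ref{thm:frob}.
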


\subsection{$p$-adic regulators of hypergeometric curves} \label{preg}
Put $\lm =1-t$ and let $\tau$ be another $p$th Frobenius on $W[\lm, (\lm-\lm^2)^{-1}]^{\dagger}$ defined by $\tau(\lm)=\lm^p$. 
Let 
 \begin{align*}
 e_{\xi} -\Phi_{\tau}(e_{\xi})
 &=\dfrac1{N^2} \sum_{i, j=1}^N (1 - \nu_1^{-i})(1- \nu_2^{-j}) [\e_{1, \tau}^{(i, j)}(\lm) \omega_{i, j}+\e_{2, \tau}^{(i, j)}(\lm) \eta_{i, j}] 
  \end{align*}
be defined in the same way. 
In this subsection, we compute $\e_{1, \tau}^{(i, N-i)}(\lm)$ and $\e_{2, \tau}^{(i, N-i)}(\lm)$ explicitly. 

The following lemma is one of the key tools to prove Theorem \ref{main:3}.

\begin{lem}[{\cite[Lemma 4.14]{As}}] \label{comp}
We have
$$\e_{1, \tau}^{(i, j)}(\lm) - \mathscr{F}^{(\sigma)}_{a_i, b_j; 1}(t)=\sum_{n=1}^{\infty} \dfrac{(t^{\tau} - t^{\sigma})^n}{n!} p^{-1} f_n(t)+b_j^{-1} \dfrac{F'_{a_i, b_j;1}(t)}{F_{a_i, b_j;1}(t)} \e_{2, \tau}^{(i, j)} (\lm), $$
where for $n \in \Z_{\geq 1}$, $f_n(t)$ is the convergent function on $\{[F_{a_i, b_j}(t)]_{<p^n}  \not \equiv 0 \pmod{p^n}\}$ defined by 
$$f_n(t)=-\dfrac{(1-\nu_1^{-i})(1-\nu_2^{-j})}{N^2} \dfrac1{F_{a_i, b_j;1}(t)} \left(\dfrac{d^{n-1}}{dt^{n-1}} \left(\dfrac{F_{a_i^{(1)}, b_j^{(1)};1}(t)}{t} \right) \right)^{\sigma}. $$
\end{lem}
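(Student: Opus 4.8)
The plan is to compare the two regulator elements $e_{\xi}-\Phi_{\sigma}(e_{\xi})$ and $e_{\xi}-\Phi_{\tau}(e_{\xi})$ by using that $\Phi_{\sigma}$ and $\Phi_{\tau}$ are Frobenius structures on one and the same overconvergent $F$-isocrystal $M_{\xi}(X_K/S_K)$ carrying the Gauss--Manin connection $\nabla$. Since $\sigma$ and $\tau$ lift the same absolute Frobenius on $S$ --- indeed $t^{\tau}-t^{\sigma}=1-(1-t)^p-t^p\equiv 0\pmod p$ --- the independence-of-lift principle in rigid cohomology gives the operator identity
$$\Phi_{\tau}=\Bigl(\sum_{n\ge 0}\frac{(t^{\tau}-t^{\sigma})^n}{n!}\,\nabla_{\partial_t}^{\,n}\Bigr)\circ\Phi_{\sigma},$$
the series converging because $t^{\tau}-t^{\sigma}\in pW[t,(t-t^2)^{-1}]^{\dagger}$. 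First I would install this identity (citing the general theory, or checking it directly against the explicit Frobenius and connection matrices), since it is the single place where the two Frobenii are genuinely linked.

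Applying it to $e_{\xi}$ and subtracting off the $n=0$ term, I obtain
$$e_{\xi}-\Phi_{\tau}(e_{\xi})=\bigl(e_{\xi}-\Phi_{\sigma}(e_{\xi})\bigr)-\sum_{n\ge 1}\frac{(t^{\tau}-t^{\sigma})^n}{n!}\,\nabla_{\partial_t}^{\,n}\Phi_{\sigma}(e_{\xi}).$$
The structural point is that $\Phi_{\sigma}(e_{\xi})$ projects to $\sigma(1)=1$ in $\mathscr{O}(S_K)^{\dagger}$, so it lifts the horizontal section $1$; hence $\nabla_{\partial_t}^{\,n}\Phi_{\sigma}(e_{\xi})$ with $n\ge 1$ already lies in the cohomology part $\mathscr{O}(S_K)^{\dagger}(2)\otimes_{\mathscr{O}(S_K)}H^1_{\rm dR}(X_K/S_K)$. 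I would then project onto the $(i,j)$-eigenspace and read off coefficients in the bases $\{\omega_{i,j},\eta_{i,j}\}$ and $\{\tw_{i,j},\widetilde{\eta}_{i,j}\}$: the first term on the right contributes the known $\e_{1,\sigma}^{(i,j)},\e_{2,\sigma}^{(i,j)}$ (equivalently $E_{1,\sigma}^{(i,j)},E_{2,\sigma}^{(i,j)}$).

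Two inputs then identify the pieces (abbreviate $F=F_{a_i,b_j}$). First, the change of basis $\tw_{i,j}=F^{-1}\omega_{i,j}$ and $\widetilde{\eta}_{i,j}=-t(1-t)^{a_i+b_j}(F'\omega_{i,j}+b_jF\eta_{i,j})$ gives the purely algebraic relation $\e_{1,*}^{(i,j)}-b_j^{-1}(F'/F)\e_{2,*}^{(i,j)}=E_{1,*}^{(i,j)}/F$, valid for both $*=\sigma,\tau$; applied to $\tau$ this is exactly what isolates the term $b_j^{-1}(F'/F)\e_{2,\tau}^{(i,j)}$ of the statement. Second, \cite[Theorem 4.8]{As} rewrites $E_{1,\sigma}^{(i,j)}/F$ as $\mathscr{F}^{(\sigma)}_{a_i,b_j;1}(t)$ up to sign, furnishing the $\mathscr{F}^{(\sigma)}_{a_i,b_j;1}(t)$ on the left. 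The correction sum carries the rest: I would compute the $\tw_{i,j}$-component of $\nabla_{\partial_t}^{\,n}\Phi_{\sigma}(e_{\xi})$ by feeding in Dwork's Frobenius formula --- which produces the Dwork-prime series $F_{a_i^{(1)},b_j^{(1)}}$ together with the superscript $\sigma$ --- and the companion form of the hypergeometric connection in $\{\tw_{i,j},\widetilde{\eta}_{i,j}\}$ (one generator flat), so that iterating $\nabla_{\partial_t}$ collapses, after its first application, to repeated ordinary differentiation of the coefficient. After dividing by $F$, the $(n-1)$-fold derivative $\frac{d^{n-1}}{dt^{n-1}}(F_{a_i^{(1)},b_j^{(1)}}/t)^{\sigma}$ and the normalizing factor $p^{-1}$ --- the latter coming from the Tate twist $(2)$ and the chain-rule factor $\frac{d}{dt}t^{\sigma}=p\,t^{p-1}$ --- should appear exactly as in $f_n(t)$.

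The main obstacle is this last explicit evaluation of $\nabla_{\partial_t}^{\,n}\Phi_{\sigma}(e_{\xi})$ and its matching to the displayed $f_n$, while simultaneously keeping straight the $\{\omega_{i,j},\eta_{i,j}\}$ versus $\{\tw_{i,j},\widetilde{\eta}_{i,j}\}$ bookkeeping (to separate the $\e_1$- from the $\e_2$-contribution), the signs, the exact power of $p$, and the cancellation between the $\nabla_{\partial_t}^{\,n}e_{\xi}$ and $\nabla_{\partial_t}^{\,n}(e_{\xi}-\Phi_{\sigma}(e_{\xi}))$ terms that must leave only the $F_{a_i^{(1)},b_j^{(1)}}$-part behind. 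One also has to verify that the resulting series converges on the locus $\{[F_{a_i,b_j}(t)]_{<p^n}\not\equiv 0 \bmod p^n\}$ on which $f_n$ is asserted to be defined. By contrast the overall architecture --- Frobenius comparison through $\nabla$, then eigenspace projection and basis change --- is essentially forced once one observes $t^{\tau}\equiv t^{\sigma}\bmod p$.
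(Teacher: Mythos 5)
This lemma is not actually proved in the paper --- it is imported verbatim from \cite[Lemma 4.14]{As} --- so your proposal has to be measured against the cited proof, whose overall architecture (Taylor comparison of the two Frobenius structures, eigenspace projection, base change between $\{\omega_{i,j},\eta_{i,j}\}$ and $\{\tw_{i,j},\widetilde{\eta}_{i,j}\}$, and the identification $E^{(i,j)}_{1,\sigma}/F_{a_i,b_j}=-\mathscr{F}^{(\sigma)}_{a_i,b_j;1}$ from \cite[Theorem 4.8]{As}) you have correctly reconstructed. However, your single displayed identity --- the step you yourself call the one place where the two Frobenii are linked --- is wrong as written. The independence-of-lift isomorphism sends $e\otimes 1\in\tau^*M$ to $\sum_n \nabla_{\partial_t}^{\,n}(e)\otimes (t^{\tau}-t^{\sigma})^n/n!\in\sigma^*M$, so composing with $\Phi_{\sigma}$ gives
$$\Phi_{\tau}(e)=\sum_{n\ge 0}\frac{(t^{\tau}-t^{\sigma})^n}{n!}\,\Phi_{\sigma}\bigl(\nabla_{\partial_t}^{\,n}e\bigr),$$
with $\Phi_{\sigma}$ \emph{after} the iterated connection, not $\Phi_{\tau}=\bigl(\sum_n\frac{(t^{\tau}-t^{\sigma})^n}{n!}\nabla_{\partial_t}^{\,n}\bigr)\circ\Phi_{\sigma}$ as you wrote. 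The two orderings genuinely differ: horizontality of $\Phi_{\sigma}$ gives $\nabla_{\partial_t}\circ\Phi_{\sigma}=\frac{dt^{\sigma}}{dt}\,\Phi_{\sigma}\circ\nabla_{\partial_t}$, so they disagree by chain-rule factors. A sanity check on the constant isocrystal shows it: the correct formula is Taylor's $f(t^{\tau})=\sum_n f^{(n)}(t^{\sigma})(t^{\tau}-t^{\sigma})^n/n!$, whereas your operator evaluates $f^{\sigma}$ at $t+(t^{\tau}-t^{\sigma})$, which is not $f(t^{\tau})$. The error is not cosmetic for your endgame: only with $\Phi_{\sigma}\circ\nabla^{\,n}$ does one differentiate first and twist afterwards, which is exactly how $f_n$ acquires the shape $\bigl(\frac{d^{n-1}}{dt^{n-1}}(F_{a_i^{(1)},b_j^{(1)}}(t)/t)\bigr)^{\sigma}$ with $\sigma$ enclosing the $(n-1)$-fold derivative; moreover the object one iterates is then $\nabla^{\,n}e_{\xi}=\nabla^{\,n-1}(-d\log\xi)$, which is explicitly computable and automatically lies in the $H^1$-part for $n\ge1$ (since $e_{\xi}$ lifts $1$ and the quotient connection is $d$), whereas $\nabla^{\,n}\Phi_{\sigma}(e_{\xi})$ would require differentiating the full, a priori unknown, Frobenius image and would not collapse to repeated ordinary differentiation of one coefficient. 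Relatedly, your attribution of the $p^{-1}$ partly to ``the chain-rule factor $\frac{d}{dt}t^{\sigma}=pt^{p-1}$'' is off: in the correct ordering no such factor occurs, and the $p^{-1}$ comes from the Tate-twist normalization ($\Phi$ acts by $p^{-2}\sigma$ on $\mathscr{O}(S_K)^{\dagger}(2)$ while the Frobenius on the relevant component of $H^1_{\rm dR}$ contributes one power of $p$).

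The remaining ingredients of your plan are sound and match the cited argument: $t^{\tau}-t^{\sigma}=1-(1-t)^p-t^p\equiv 0\pmod p$, so $v_p\bigl((t^{\tau}-t^{\sigma})^n/n!\bigr)\ge n\bigl(1-\frac{1}{p-1}\bigr)$ and the series converges on the overconvergent algebra; the base-change relation $\e_{1,*}^{(i,j)}-b_j^{-1}(F'_{a_i,b_j}/F_{a_i,b_j})\e_{2,*}^{(i,j)}=E_{1,*}^{(i,j)}/F_{a_i,b_j}$ (the $t$-variable analogue of \eqref{ve1}--\eqref{ve2}, up to the sign conventions you flagged) does isolate the $b_j^{-1}(F'/F)\e^{(i,j)}_{2,\tau}(\lm)$ term; and \cite[Theorem 4.8]{As} supplies the $\mathscr{F}^{(\sigma)}_{a_i,b_j;1}$ on the left. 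Once the ordering in the Taylor identity is corrected --- which your own consistency checks against $f_n$ would likely have forced --- your proposal becomes the proof of \cite[Lemma 4.14]{As}.
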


From now on, we suppose that $a_i+b_j=1$, i.e. $i+j=N$. 
For simplicity, we write $H^1_{\rm dR}(X_K/S_K)^{(i, N-i)}$ as $H^1_{\rm dR}(X_K/S_K)^{(i)}$. 
Put 
\begin{align}
\begin{split} \label{w*}
&\omega^*_i :=\dfrac1{F_{a_i, 1-a_i;1}(\lm)} \omega_{i, N-i}, \\
&\eta^*_i := \lm (1- \lm) (F'_{a_i, 1-a_i;1}(\lm) \omega_{i, N-i} - (1-a_i) F_{a_i, 1-a_i;1}(\lm) \eta_{i, N-i}).  
\end{split}
\end{align}
Then they form a basis of $K((\lm)) \otimes_{\mathscr{O}(S_K)}  H^1_{\rm dR}(X_K/S_K)^{(i)}$. 

\begin{prop}[{\cite[Proposition 4.2]{As}}] \label{GM}
Let $\nabla$ be the Gauss-Manin connection on $H^1_{\rm dR}(X_K/S_K)$. 
It naturally extends to a connection on $K((\lm)) \otimes_{\mathscr{O}(S_K)} H^1_{\rm dR}(X_K/S_K)$, which we also write by $\nabla$. 
Then we have
\begin{align*}
&\begin{pmatrix}
\nabla(\omega_{i, N-i}) & \nabla(\eta_{i, N-i})
\end{pmatrix}
= 
d \lm \otimes
\begin{pmatrix}
\omega_{i, N-i} & \eta_{i, N-i}
\end{pmatrix}
\begin{pmatrix}
0 & a_i(\lm-\lm^2)^{-1} \\
1-a_i & -(1-2\lm) (\lm-\lm^2)^{-1}
\end{pmatrix}, \\
&\begin{pmatrix}
\nabla(\omega^*_{i}) & \nabla(\eta^*_{i})
\end{pmatrix}
= 
d \lm \otimes
\begin{pmatrix}
\omega^*_{i} & \eta^*_{i}
\end{pmatrix}
\begin{pmatrix}
0 & 0 \\
-\lm^{-1} (1- \lm)^{-1} F_{a_i, 1-a_i;1}(\lm)^{-2} & 0
\end{pmatrix}. 
\end{align*}
\end{prop}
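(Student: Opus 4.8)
The statement to prove is Proposition \ref{GM}, which gives the Gauss--Manin connection matrices for two bases of the eigenspace $H^1_{\rm dR}(X_K/S_K)^{(i)}$. Let me think about how to establish these.

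\medskip

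The plan is to first establish the connection matrix for the basis $\{\omega_{i,N-i}, \eta_{i,N-i}\}$, and then deduce the matrix for the modified basis $\{\omega^*_i, \eta^*_i\}$ by a change-of-basis computation using the defining relations \eqref{w*} together with the hypergeometric differential equation satisfied by $F_{a_i, 1-a_i}(\lm)$.

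\medskip

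For the first matrix, I would compute $\nabla(\omega_{i,N-i})$ and $\nabla(\eta_{i,N-i})$ directly. Since $\nabla$ is the Gauss--Manin connection, acting by $t\frac{d}{dt}$ (or here $\lm\frac{d}{d\lm}$, since $\lm = 1-t$ and $d\lm = -dt$) on the relative de Rham cohomology classes, the key is to differentiate the explicit rational $1$-forms $\omega_{i,j}$ and $\eta_{i,j}$ with respect to the base parameter and rewrite the result, modulo exact forms $d(\text{function})$, as an $\mathscr{O}(S_K)$-linear combination of $\omega_{i,N-i}$ and $\eta_{i,N-i}$. Concretely, using the two equivalent expressions for $\omega_{i,j}$ as a form in $dx$ and in $dy$, and the formula $\eta_{i,j} = (x^N-1+t)^{-1}\omega_{i,j}$, one differentiates under the relation $(1-x^N)(1-y^N)=t$ and integrates by parts on the fiber to absorb exact terms. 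This is essentially the standard Picard--Fuchs computation for these curves; I expect the entries $a_i(\lm-\lm^2)^{-1}$, $1-a_i$, and $-(1-2\lm)(\lm-\lm^2)^{-1}$ to emerge from tracking the $t$- and $\lm$-dependence of the poles and the exponents $a_i = 1-i/N$, $b_j = 1-j/N = a_i$ (since $j = N-i$ forces $b_j = 1-a_i$, so $a_i + b_j = 1$ in this fibre). A convenient alternative, if available from the cited references, is to invoke that these $1$-forms realize the hypergeometric local system with parameters $(a_i, b_j; 1)$, so that the connection is the companion matrix of the hypergeometric operator; then the first matrix is a rewriting of that companion form in the chosen basis.

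\medskip

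For the second matrix, I would substitute the definitions from \eqref{w*}. Writing $F = F_{a_i, 1-a_i}(\lm)$ for brevity, we have $\omega^*_i = F^{-1}\omega_{i,N-i}$ and $\eta^*_i = \lm(1-\lm)(F'\omega_{i,N-i} - (1-a_i)F\,\eta_{i,N-i})$. Applying $\nabla$ to each and using the Leibniz rule together with the already-established first matrix, every term re-expresses in terms of $\omega_{i,N-i}, \eta_{i,N-i}$, hence back in terms of $\omega^*_i, \eta^*_i$ after inverting the change of basis. The crucial input here is that $F$ satisfies the hypergeometric differential equation $\lm(1-\lm)F'' + (1 - (a_i + b_j + 1)\lm)F' - a_i b_j F = 0$, which with $a_i + b_j = 1$ and $b_j = 1-a_i$ becomes $\lm(1-\lm)F'' + (1-2\lm)F' - a_i(1-a_i)F = 0$. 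Plugging this relation in should force the cancellation of the diagonal and upper-right entries, leaving only the single off-diagonal entry $-\lm^{-1}(1-\lm)^{-1}F^{-2}$. The appearance of $F^{-2}$ is natural: it is (up to the factor $\lm^{-1}(1-\lm)^{-1}$) the Wronskian-type normalization that the basis $\{\omega^*_i, \eta^*_i\}$ is designed to produce.

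\medskip

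The main obstacle I anticipate is the first, purely geometric computation: correctly carrying out the integration-by-parts / reduction-of-pole-order step on the hypergeometric curve to show that the $\lm$-derivatives of $\omega_{i,N-i}$ and $\eta_{i,N-i}$ reduce, modulo exact forms, to the claimed combinations. This requires careful bookkeeping of the differential $d\big(x^{a}y^{b}(\cdots)\big)$ relations coming from $(1-x^N)(1-y^N)=t$ and the sign conventions linking $\lm$ and $t$. By contrast, the second step is a mechanical, if somewhat lengthy, substitution driven entirely by the hypergeometric ODE, and I would treat it as a routine verification once the first matrix and the ODE are in hand. Since both matrices are explicitly quoted from \cite[Proposition 4.2]{As}, I would also note that the cleanest route is simply to cite that reference for the first matrix and present only the elementary change-of-basis derivation of the second, starred, matrix in detail.
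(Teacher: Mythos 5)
Your proposal is sound, but note the point of comparison: the paper gives \emph{no} proof of this proposition at all --- it is imported wholesale from \cite[Proposition 4.2]{As}, so your closing remark (``cite that reference and present only the change-of-basis step'') is essentially what the paper does, minus even the change-of-basis step. That said, your reconstruction is correct where it is checkable, and it is actually a useful supplement to the bare citation, because Asakura's statement is in the coordinate $t$ with general parameters $(a_i,b_j)$ and with his basis $\widetilde{\omega}_{i,j}$, $\widetilde{\eta}_{i,j}$ normalized by $F_{a_i,b_j}(t)$ at $t=0$, whereas the starred basis \eqref{w*} here is adapted to $\lm=0$ (i.e.\ $t=1$) and normalized by $F_{a_i,1-a_i}(\lm)$, which is a solution of the \emph{transformed} equation; so the second matrix is not literally in \cite{As} and does require the verification you outline (together with the bookkeeping $d\lm=-dt$, $b_j=1-a_i$ when transporting the first matrix). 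Your mechanism for the second matrix is exactly right: with $F=F_{a_i,1-a_i}(\lm)$, the Leibniz rule and the first matrix give $\nabla(\omega_i^*)=d\lm\otimes\bigl(-F'F^{-2}\omega_{i,N-i}+(1-a_i)F^{-1}\eta_{i,N-i}\bigr)$, which is precisely $-\lm^{-1}(1-\lm)^{-1}F^{-2}\,\eta_i^*$ by \eqref{w*}, while the coefficient of $\omega_{i,N-i}$ in $\nabla(\eta_i^*)$ collapses to
\begin{align*}
\lm(1-\lm)F''+(1-2\lm)F'-a_i(1-a_i)F=0,
\end{align*}
the hypergeometric equation with parameters $(a_i,1-a_i;1)$, and the $\eta_{i,N-i}$-coefficient cancels identically --- so $\nabla(\eta_i^*)=0$ as claimed (consistent with $\eta_i^*$ extending Deligne's canonical extension and being killed by the connection). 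The only part of your plan that remains a sketch is the first, geometric matrix (the pole-reduction/integration-by-parts on the curve); since you correctly identify this as the standard Picard--Fuchs computation and propose to cite \cite{As} for it, there is no gap of substance, though as written (``I expect the entries \ldots to emerge'') it would not stand alone without either that citation or the explicit fiberwise computation.
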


Let $f \colon Y \to \mathbb{P}^1_W$ be the morphism in Lemma \ref{proj}. 
For $R=W, K$, we write $Y_R=R \times_W Y$ and $\Delta_R = \Spec R[[\lm]] \hookrightarrow \mathbb{P}^1_R$. 
Put $\mathscr{Y}_R := f^{-1}(\Delta_R)$. 
Let $D_R \subset \mathscr{Y}_R$ be the fiber at $\lm=0$ and $0=\Spec R[[\lm]]/(\lm)$. 
Then we have a commutative diagram  
\[
  \begin{CD}
     D_R @>>> \mathscr{Y}_R @>>> Y_R \\
  @VVV    @VVV  @VVV\\
    \{0\}   @>>>  \Delta_R @>>> \mathbb{P}^1_R. 
  \end{CD}
\]
We define the log de Rham complex $\omega_{\mathscr{Y}_R/R[[\lm]]}^{\bullet}$ by 
$$\omega_{\mathscr{Y}_R/R[[\lm]]}^{\bullet}= \operatorname{Coker} \left[\frac{d\lm}{\lm} \otimes \Omega^{\bullet -1}_{\mathscr{Y}_R/R[[\lm]]}(\log D_R) \to \Omega^{\bullet }_{\mathscr{Y}_R/R[[\lm]]}(\log D_R) \right]. $$

\begin{lem}
Let $H_K= H^1(\mathscr{Y}_K, \omega_{\mathscr{Y}_K/K[[\lm]]}^{\bullet})$ be Deligne's canonical extension (see \cite[(17)]{Zucker}). 
It follows from loc. cit. that $H_K \to K((\lm)) \otimes_{\mathscr{O}(S_K)}H^1_{\rm dR}(X_K/S_K)$ is injective. 
We identify $H_K$ with its image. Then the eigencomponent $H_K^{(i, N-i)}$ is a free $K[[\lm]]$-module with basis $\{\omega_i^*, \eta_i^*\}$. 
\end{lem}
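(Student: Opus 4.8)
The plan is to pin down the eigen component $H_K^{(i,N-i)}$ by means of the standard characterization of Deligne's canonical extension. Inside the meromorphic connection $(K((\lm)) \otimes_{\mathscr{O}(S_K)} H^1_{\rm dR}(X_K/S_K), \nabla)$, which has a regular singular point at $\lm = 0$, the canonical extension is the unique $K[[\lm]]$-lattice that is stable under $\nabla$, on which $\nabla$ has at worst a logarithmic pole at $\lm = 0$, and whose residue endomorphism $\operatorname{Res}_{\lm = 0} \nabla$ has all eigenvalues in $[0,1)$; that $H_K = H^1(\mathscr{Y}_K, \omega^{\bullet}_{\mathscr{Y}_K/K[[\lm]]})$ is identified with this lattice is precisely the content of loc. cit. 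Since the $\mu_N \times \mu_N$-action on $\mathscr{Y}_K$ commutes with $\nabla$ and preserves the log structure, the canonical extension respects the eigen-decomposition, so it is enough to exhibit, on the component indexed by $(i, N-i)$, a lattice satisfying the residue condition.

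First I would check that $L := K[[\lm]]\,\omega_i^* \oplus K[[\lm]]\,\eta_i^*$ is a well-defined lattice: since $F_{a_i, 1-a_i}(0) = 1$, the series $F_{a_i, 1-a_i}(\lm)^{-1}$ is a unit in $K[[\lm]]$, so by \eqref{w*} the pair $\{\omega_i^*, \eta_i^*\}$ is a $K[[\lm]]$-basis of $L$ and, as noted immediately after \eqref{w*}, a $K((\lm))$-basis of $K((\lm)) \otimes_{\mathscr{O}(S_K)} H^1_{\rm dR}(X_K/S_K)^{(i)}$. Next I would read off the connection from Proposition \ref{GM}: with respect to $(\omega_i^*, \eta_i^*)$ it is
\begin{align*}
\begin{pmatrix} \nabla(\omega_i^*) & \nabla(\eta_i^*) \end{pmatrix}
= d\lm \otimes
\begin{pmatrix} \omega_i^* & \eta_i^* \end{pmatrix}
\begin{pmatrix} 0 & 0 \\ -\lm^{-1}(1-\lm)^{-1} F_{a_i, 1-a_i}(\lm)^{-2} & 0 \end{pmatrix}.
\end{align*}
Writing $d\lm = \lm \cdot \tfrac{d\lm}{\lm}$ shows that the pole is logarithmic, and since $(1-\lm)^{-1} F_{a_i, 1-a_i}(\lm)^{-2}|_{\lm = 0} = 1$ the residue is the nilpotent matrix $\left(\begin{smallmatrix} 0 & 0 \\ -1 & 0 \end{smallmatrix}\right)$. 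Thus $L$ is $\nabla$-stable with a logarithmic pole and residue eigenvalues all equal to $0 \in [0,1)$, so by the uniqueness in the characterization above $L = H_K^{(i,N-i)}$, which is therefore free over $K[[\lm]]$ with basis $\{\omega_i^*, \eta_i^*\}$.

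The computation of the residue is immediate, so the part that requires care is the interface between the geometric definition of $H_K$ via the log de Rham complex and the connection-theoretic characterization by the residue condition. This rests on the identification cited as loc. cit., which applies because the fiber $f^{-1}(t=1)$ has the reduction type described in Lemma \ref{proj}(ii); the resulting monodromy at $\lm = 0$ is unipotent (consistently with the local exponents of ${}_2F_1(a_i, 1-a_i; 1; t)$ at $t = 1$ being $\{0, 0\}$ when $a_i + b_j = 1$), so that the eigenvalue condition reduces to the nilpotence we verified. I expect this compatibility, rather than any explicit calculation, to be the only genuine point needing justification.
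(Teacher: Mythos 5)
Your proposal is correct and follows essentially the same route as the paper: the paper's proof likewise sets $(H^0_K)^{(i,N-i)} = K[[\lm]]\,\omega_i^* \oplus K[[\lm]]\,\eta_i^*$, verifies the conditions of \cite[(17)]{Zucker} via the connection matrix in Proposition \ref{GM}, and concludes by the uniqueness of Deligne's canonical extension. You merely spell out what the paper leaves implicit --- that $F_{a_i,1-a_i}(\lm)^{-1}$ is a unit in $K[[\lm]]$ and that the residue is the nilpotent matrix $\left(\begin{smallmatrix} 0 & 0 \\ -1 & 0 \end{smallmatrix}\right)$, matching the unipotent local monodromy at $\lm=0$.
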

\begin{proof}
Let $(H^0_K)^{(i, N-i)}= K[[\lm]] \omega_i^* \oplus K[[\lm]] \eta_i^*$. 
We can check that $H^0_K$ satisfies conditions \cite[(17)]{Zucker} by Proposition \ref{GM}. Therefore, we conclude that $H_K^{(i, N-i)}=(H_K^0)^{(i, N-i)}$ by the uniqueness of Deligne's canonical extension. 
\end{proof}

By Lemma \ref{proj}, we have 
$$D_W=N \cdot E + F, $$
where $E \cong \mathbb{P}^1$ is the exceptional divisor, and $F$ is the $N$th Fermat curve, intersecting each other transversally at $N$ points.  
Let $\mathscr{Y}_{\F}=\mathscr{Y}_W  \times_W \F$ and $D_{\F} = D_W \times_W \F$. 
Let 
$$H^{\bullet}_{\text{log-crys}}((\mathscr{Y}_{\F}, D_{\F})/(\Delta_W, 0))$$
be the log-crystalline cohomology, where $(\mathscr{X}, \mathscr{D})$ denotes the log scheme with log structure induced by the divisor $\mathscr{D}$. 
Since $E + F$ is a relative NCD over $W$ and 
$N$ is prime to $p$, the comparison theorem by Kato \cite[Theorem (6.4)]{Kato} gives
$$H^{\bullet}_{\text{log-crys}}((\mathscr{Y}_{\F}, D_{\F})/(\Delta_W, 0)) \simeq H^{\bullet} (\mathscr{Y}_W, \omega^{\bullet}_{\mathscr{Y}_W/W[[\lm]]}). $$
The log-crystalline cohomology is endowed with the $p$th Frobenius, which induces the Frobenius $\Phi_{\tau}$-action on $H_K^{(i, N-i)}$ via the isomorphism above.

\begin{thm} \label{thm:frob}
Let $j \in \{1, \ldots, N-1\}$ be the unique integer such that $pj \equiv i \pmod{N}$. 
Let $G_i^{(\tau)}(\lm) \in K[[\lm]]$ be a series defined by 
\begin{align*} 
\begin{split}
\dfrac{d}{d\lm}G_i^{(\tau)}(\lm)&=\dfrac1{\lm} \left(\dfrac{1}{(1- \lm)F_{a_i, 1-a_i;1}(\lm)^2}- \dfrac{1}{(1- \lm^p)F_{a_j, 1-a_j;1}(\lm^p)^2}\right), \\
 G_i^{(\tau)}(0)&=\widetilde{\psi}_p(a_i) + \widetilde{\psi}_p(1-a_i),  
 \end{split}
\end{align*}
where $\widetilde{\psi}_p(z)$ is a function in Definition \ref{definition:3}. 
Then we have  
$$
\begin{pmatrix}
\Phi_{\tau}(\omega^*_{j}) & \Phi_{\tau}(\eta^*_{j})
\end{pmatrix}
=
\begin{pmatrix}
\omega^*_{i}& \eta^*_{i}
\end{pmatrix}
\begin{pmatrix}
(-1)^{\frac{pj-i}{N}}p & 0 \\
(-1)^{\frac{pj-i}{N}}pG_i^{(\tau)}(\lm) & (-1)^{\frac{pj-i}{N}}
\end{pmatrix}.  
$$
\end{thm}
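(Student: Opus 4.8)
The plan is to exploit that the Frobenius $\Phi_{\tau}$ is horizontal for the Gauss--Manin connection $\nabla$ and $\tau$-semilinear, which reduces the determination of the matrix to solving a first-order differential equation once the behaviour at $\lm = 0$ is known. Throughout I use Proposition \ref{GM}, which in the present case $a_i + b_j = 1$ reads $\nabla(\eta^*_i) = 0$ and $\nabla(\omega^*_i) = -\lm^{-1}(1-\lm)^{-1}F_{a_i, 1-a_i}(\lm)^{-2}\, d\lm \otimes \eta^*_i$, and likewise with $i$ replaced by $j$. Since $\Phi_{\tau}$ maps $H_K^{(j, N-j)}$ into $H_K^{(i, N-i)}$ (because $pj \equiv i \pmod N$) and preserves the canonical extension $H_K$, I may write $\Phi_{\tau}(\omega^*_j) = A\,\omega^*_i + B\,\eta^*_i$ and $\Phi_{\tau}(\eta^*_j) = P\,\omega^*_i + Q\,\eta^*_i$ with $A, B, P, Q \in K[[\lm]]$.

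First I would treat $\eta^*_j$. Because $\eta^*_j$ is $\nabla$-horizontal and $\Phi_{\tau}$ commutes with $\nabla$ up to the twist $d\lm \mapsto d(\lm^p) = p\lm^{p-1}\,d\lm$, the section $\Phi_{\tau}(\eta^*_j)$ is again horizontal. A direct check using the connection matrix of Proposition \ref{GM} shows that the only $K[[\lm]]$-horizontal sections of $H_K^{(i, N-i)}$ are the constant multiples of $\eta^*_i$ (a nonzero $\omega^*_i$-component would force a $\log\lm$ term upon integration), so $P = 0$ and $Q$ is a constant, which I call $c$. Next, applying $\nabla$ to $\Phi_{\tau}(\omega^*_j) = A\omega^*_i + B\eta^*_i$ and comparing with $\Phi_{\tau}(\nabla\omega^*_j)$ --- computed from the $j$-analogue of the connection formula, the fact that $\Phi_{\tau}$ fixes the rational coefficients of $F_{a_j, 1-a_j}$ while sending $\lm \mapsto \lm^p$, and the twist $d\lm \mapsto p\lm^{p-1}d\lm$ --- yields $A' = 0$ and
\begin{equation*}
B'(\lm) = \frac{A}{\lm(1-\lm)F_{a_i, 1-a_i}(\lm)^2} - \frac{pc}{\lm(1-\lm^p)F_{a_j, 1-a_j}(\lm^p)^2}.
\end{equation*}
Thus $A$ is a constant, and requiring $B \in K[[\lm]]$ (which holds since $\Phi_{\tau}(\omega^*_j) \in H_K^{(i,N-i)}$) forces the residue of $B'$ at $\lm = 0$ to vanish; as $F_{a_i, 1-a_i}(0) = F_{a_j, 1-a_j}(0) = 1$, that residue equals $A - pc$, so $A = pc$. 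With $A = pc$ the right-hand side is exactly $pc\,\frac{d}{d\lm}G_i^{(\tau)}(\lm)$, whence $B = pc\,G_i^{(\tau)}(\lm) + (\text{const})$.

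It remains to pin down the two constants $c$ and $B(0)$, and this is where the real work lies. Both are read off at the special fibre $\lm = 0$, that is, the fibre $f^{-1}(t=1)$, through Kato's comparison isomorphism with $H^{\bullet}_{\text{log-crys}}((\mathscr{Y}_{\F}, D_{\F})/(\Delta_W, 0))$ and the $p$-th power Frobenius it carries. The unit-root eigenvalue $c = (-1)^{(pj-i)/N}$ I expect to extract from the Frobenius action on the residual cohomology along the exceptional divisor $E$ and the Fermat curve $F$, where Frobenius permutes the $N$ transversal intersection points and acts on the relevant $\mu_N$-eigenclasses by a root of unity that collapses to this sign; once $c$ is found, $A = pc$ is automatic.

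The genuinely delicate point --- and the main obstacle --- is the initial value: one must show that the $\eta^*_i$-component of $\Phi_{\tau}(\omega^*_j)$ at $\lm = 0$ equals $pc\,(\widetilde{\psi}_p(a_i) + \widetilde{\psi}_p(1-a_i))$, which forces the additive constant above to be $0$ and identifies $B(0)$ with $pc\,G_i^{(\tau)}(0)$. This requires computing the off-diagonal, unipotent (logarithmic) part of the log-crystalline Frobenius near the degenerate fibre and matching it with $p$-adic digamma values, presumably through a Gross--Koblitz type evaluation of the Frobenius on the Fermat curve combined with the defining limit of $\widetilde{\psi}_p$ in Definition \ref{definition:3}. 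Establishing this identification cleanly, and in particular controlling the constant of integration through the degeneration, is the step I expect to demand the most care.
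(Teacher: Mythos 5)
Your first half is correct and coincides with the paper's own argument: horizontality of $\eta^*_j$ plus the shape of the connection in Proposition \ref{GM} gives $\Phi_{\tau}(\eta^*_j)=c\,\eta^*_i$ with $c$ constant, applying $\nabla$ to $\Phi_{\tau}(\omega^*_j)=A\omega^*_i+B\eta^*_i$ gives $A$ constant and your ODE for $B$, and your residue-at-$\lambda=0$ argument forcing $A=pc$ is a genuine (small) streamlining --- the paper instead reads all three constants off the special fibre at once. The problem is that everything after that is an expectation, not a proof, and it is exactly where the content of the theorem lies: the ODE shape is formal, so the statement \emph{is} the two boundary constants $c=(-1)^{(pj-i)/N}$ and $B(0)=pc\,\bigl(\widetilde{\psi}_p(a_i)+\widetilde{\psi}_p(1-a_i)\bigr)$, and you establish neither. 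For the sign, the paper does what you gesture at, but concretely: it composes with the Poincar\'e residue map $R\colon H_K\to\bigoplus_{P\in Z_W}K\cdot[P]$ to the intersection locus $Z=E\cap F=\{P_{\nu}:\nu^N=-1\}$, proves the compatibility $R\circ\Phi_{\tau}=p\,\Phi_Z\circ R$, computes $R(\eta^*_i)=0$ and $R(\omega^*_i)=-N\sum_{\nu^N=-1}\nu^i[P_{\nu}]$, and compares coefficients. Note the mechanism is not that Frobenius permutes the intersection points: $\Phi_Z$ fixes each class $[P_{\nu}]$ and is $F$-semilinear, so the sign comes from $F(\nu^j)=\nu^{pj}$ on the Teichm\"uller coefficients, giving $C_i=\nu^{pj-i}=(-1)^{(pj-i)/N}$ since $N\mid pj-i$ and $\nu^N=-1$. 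Your sketch, as stated, would need this repair before it computes anything.

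The more serious gap is the one you yourself flag: the initial value $f_2(0)=\widetilde{\psi}_p(a_i)+\widetilde{\psi}_p(1-a_i)$, i.e.\ the constant of integration. You propose ``a Gross--Koblitz type evaluation of the Frobenius on the Fermat curve'' but do not carry it out, and nothing in your argument pins this constant down --- the regularity of $B$ at $\lambda=0$ was already spent on getting $A=pc$. The paper resolves this by restricting to $\lambda=0$ (using $\omega_{i,N-i}|_{\lambda=0}=\omega^*_i|_{\lambda=0}$ and $\lambda\frac{d}{d\lambda}\omega_{i,N-i}|_{\lambda=0}=-\eta^*_i|_{\lambda=0}$) and invoking Asakura--Hagihara \cite[Theorem 5.4]{AH}, which computes the Frobenius structure of the hypergeometric equation at the degenerate point and produces precisely the $p$-adic digamma values; as a by-product it also yields $\alpha_1=\alpha_2=\alpha_3$, which you derived independently. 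So your proposal reproduces the differential-algebra skeleton of the proof but is incomplete at the two boundary determinations, and the second of these is not a routine verification: without an input of the strength of \cite[Theorem 5.4]{AH} (or an equivalent unipotent-Frobenius computation at the semistable fibre), the theorem as stated is not proved.
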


\begin{proof}
Since $\Phi_{\tau} \nabla=\nabla \Phi_{\tau}$, we have 
$\Phi_{\tau} \operatorname{Ker}(\nabla) \subset  \operatorname{Ker}(\nabla)$. 
Moreover, $\Phi_{\tau}$ sends the component $H_i:=K((\lm)) \otimes_{\mathscr{O}(S_K)} H^1_{\rm dR}(X_K/S_K)^{(i)}$ to the component $H_{pi}$ since $\Phi_{\tau} [\zeta, \nu]=[\zeta, \nu] \Phi_{\tau}$. 
Therefore, by Proposition \ref{GM}, we have 
$$\Phi_{\tau}(\eta_{j}^*) \in K\eta_i^*. $$
Let 
\begin{align} \label{e2}
\Phi_{\tau}(\omega_{j}^*)=pf_1(\lm) \omega_i^*+p f_2(\lm) \eta_i^*, \quad f_1(\lm), f_2(\lm) \in K[[\lm]]. 
\end{align}
By applying $\nabla$ on \eqref{e2}, we have
$$pf'_1(\lm) \omega_i^* \equiv 0 \pmod {K[[\lm]] \eta_i^*}, $$
which concludes that $f_1(\lm)$ is a constant. 
Therefore, there exist constants $\alpha_1$, $\alpha_2 \in K$ such that 
\begin{align} \label{phi1}
&\Phi_{\tau}(\omega_{j}^*)=p\alpha_1 \omega_i^* +p f_2(\lm) \eta_i^*, \\
& \Phi_{\tau}(\eta_{j}^*)=\alpha_2 \eta_i^*. 
\end{align}
Let us show $G_i^{(\tau)}(\lm)= \alpha_1 f_2(\lm)$ and $\alpha_1=\alpha_2$.  
By applying $\nabla$ on \eqref{phi1} and using Proposition \ref{GM}, we have 
$$\dfrac{d}{d\lm}f_2(\lm)=\dfrac1{ \lm} \left(\dfrac{\alpha_1}{(1- \lm)F_{a_i, 1-a_i;1}(\lm)^2}- \dfrac{\alpha_2}{(1- \lm^p)F_{a_j, 1-a_j;1}(\lm^p)^2}\right). $$
For an element $u$ of $K[[\lm]]$-module, $u|_{\lm=0}$ denotes the reduction modulo $\lm$. 
By Proposition \ref{GM}, we have 
$$\omega_{i, N-i} |_{\lm=0}=\omega_i^*|_{\lm=0}, \quad \lm \dfrac{d}{d \lm} \omega_{i, N-i}|_{\lm=0}=-\eta_i^*|_{\lm=0}. $$
By \cite[Theorem 5.4]{AH}, we have 
\begin{align*}
&\Phi_{\tau}(\omega_j^*|_{\lm=0})
=pC_i\omega_i^*|_{\lm=0} + pC_i (\widetilde{\psi}_p(a_i) +\widetilde{\psi}_p(1-a_i)) \eta_i^*|_{\lm=0}, \\
&\Phi_{\tau}(\eta_j^* |_{\lm=0})=C_i \eta_i^*|_{\lm=0}
\end{align*}
for some $C_i \in K^*$, which concludes that $\alpha_1=\alpha_2=C_i$ and $f_2(0)=C_i(\widetilde{\psi}_p(a_i) +\widetilde{\psi}_p(1-a_i))$, i.e. $f_2(\lm)= C_i G_i^{(\tau)}(\lm)$. 
Therefore, we obtain the theorem up to the constant $C_i$. 
To determine the constant, we use the Poincar\'e residue map as follows. 
\end{proof}

Let $U_1, U_2 \subset \mathscr{Y}_W$ be two affine open sets defined by 
\begin{align*}
&U_1=\Spec W[x, t]/(1+t^N-x^Nt^N), \\
 &U_2=\Spec W[y, s]/(1+s^N-y^Ns^N), 
\end{align*}
where $x=ys$ and $y=xt$.  
For $\nu \in \mu_{2N}$, let $P_{\nu}$ be the point of $U_2$ defined by $(y, s)=(0, \nu)$. 
Then the intersection locus $Z:=E \cap F$ of $D_W$ is given by  
$$Z=\{P_{\nu} \mid \nu \in \mu_{2N}, \ \nu^N=-1\}. $$
We consider the composition of morphisms 
$$\omega^{\bullet}_{\mathscr{Y}_W/W[[\lm]]} \xrightarrow{\wedge \frac{d\lm}{\lm}} \Omega^{\bullet+1}_{\mathscr{Y}_W/W} (\log D_W) \xrightarrow{\rm Res} \mathscr{O}(Z) [-1]$$
of complexes where Res is the Poincar\'e residue map.   
This gives rise to the map 
$$R \colon H_K =H^1(\mathscr{Y}_K, \omega^{\bullet}_{\mathscr{Y}_K/K[[\lm]]}) \to H^0(Z_K, \mathscr{O}(Z_K))  =\bigoplus_{P \in Z_W} K \cdot [P]. $$
This map is compatible with respect to the Frobenius $\Phi_{\tau}$ on the left and the Frobenius $\Phi_Z$ on the right in the sense that 
$$R \circ \Phi_{\tau} = p \Phi_Z \circ R. $$
 Note that $\Phi_Z$ is an $F$-linear map such that $\Phi_Z([P]) = [P]$ for any $P \in Z_W$, where $F$ is the Frobenius on $W$. 
 
\begin{proof}[Proof of Theorem \ref{thm:frob} (continued)] 
 Since $R(\lm \omega_{i, N-i})=0$ and 
 $$R(\lm \eta_{i, N-i})={\rm Res}\left(N(1-\lm)^{-1} x^{i-N} y^{j-N-1} dy \wedge d\lm\right)=0, $$
 we have $R(\eta_i^*)=0$. 
On the other hand, 
\begin{align}
\begin{split} \label{Rw}
R(\omega_i^*)= R(\omega_i)&=
{\rm Res} \left(N\dfrac{x^{i-1}y^{j-N}}{1-x^N}dx \wedge \dfrac{d \lm}{\lm} \right) \\
&={\rm Res} \left(N\dfrac{x^{i-1}y^{j-N}}{1-x^N}dx \wedge N\dfrac{(1-x^N)y^{N-1}}{x^N+y^N-x^Ny^N} dy\right) \\
&={\rm Res} \left(N^2\dfrac{s^{i-1}y^{-1}}{1+s^N-s^Ny^N} ds \wedge dy \right) \\
&={\rm Res} \left(N^2\dfrac{s^{i-1}y^{-1}}{1+s^N} \sum_{n=0}^{\infty} \left(\dfrac{s^Ny^N}{1+s^N} \right)^n ds \wedge dy \right) \\
&=-N \sum_{\substack{\nu \in \mu_{2N} \\ \nu^N=-1}} \nu^i \cdot [P_{\nu}]. 
\end{split}
\end{align}
We turn to the proof. 
Applying to $R$ on both sides of 
$$\Phi_{\tau} (\omega_j^*)=C_i (p \omega_i^*+p G_i^{(\tau)}(\lm) \eta_i^*), $$ 
we have 
$$p \Phi_Z \circ R (\omega_j^*)=C_i pR(\omega_i^*).$$
By \eqref{Rw}, we have 
$$C_i  \left(-N\sum_{\substack{\nu \in \mu_{2N} \\ \nu^N=-1}}  \nu^i \cdot [P_{\nu}]\right) =  \Phi_Z \left(-N\sum_{\substack{\nu \in \mu_{2N} \\ \nu^N=-1}}  \nu^j \cdot  [P_{\nu}]\right)=  - N\sum_{\substack{\nu \in \mu_{2N} \\ \nu^N=-1}}  \nu^{pj}\cdot  [P_{\nu}], $$
and hence $C_i=\nu^{pj-i}=(-1)^{\frac{pj-i}{N}}$. 
\end{proof}

Let 
\begin{align*}
p_i \colon \mathscr{O}(S_K)^{\dagger} \otimes_{\mathscr{O}(S_K)} H^1_{\rm dR} (X_K/S_K) \to \mathscr{O}(S_K)^{\dagger} \otimes_{\mathscr{O}(S_K)} H^1_{\rm dR} (X_K/S_K)^{(i)} \end{align*}
be the natural projection. 
Define $\e_{k, \tau}^{(i)}(\lm)$ and $E_{k, \tau}^{(i)}(\lm)$ for $k=1, 2$ by 
 \begin{align}
 \begin{split}
 p_i(e_{\xi} -\Phi_{\tau}(e_{\xi}))
 &=\dfrac1{N^2}  (1 - \nu_1^{-i})(1- \nu_2^{-(N-i)}) [\e_{1, \tau}^{(i)}(\lm) \omega_{i, N-i}+\e_{2, \tau}^{(i)}(\lm) \eta_{i, N-i}] \\
 &=\dfrac1{N^2}  (1 - \nu_1^{-i})(1- \nu_2^{-(N-i)}) [E_{1, \tau}^{(i)}(\lm)\omega_i^*+E_{2, \tau}^{(i)}(\lm)\eta_i^*].  \label{ext}
 \end{split}
 \end{align}
By \eqref{w*}, one can show that 
\begin{align}
 \e^{(i)}_{1, \tau}(\lm ) &=F_{a_i, 1-a_i;1}(\lm)^{-1}  E^{(i)}_{1, \tau}(\lm ) + (\lm - \lm^2) F'_{a_i, 1-a_i;1} (\lm) E_{2, \tau}^{(i)}(\lm),  \label{ve1}\\
  \e^{(i)}_{2, \tau}(\lm ) &= -(1-a_i)(\lm - \lm^2) F_{a_i, 1-a_i;1}(\lm) E^{(i)}_{2, \tau} (\lm). \label{ve2} 
\end{align}

\begin{thm} \label{de}
Let $\xi=\xi(\nu_1, \nu_2)$ be the Milnor symbol as in \eqref{xi}. 
 Then for any $i \in \{1, \ldots, N-1\}$, $E_{k, \tau}^{(i)}(\lm) \in K[[\lm]]$ $(k=1, 2)$ and they satisfy  
 \begin{align*}
&\dfrac{d}{d\lm}E_{1, \tau}^{(i)}(\lm)=  \dfrac{F_{a_i, 1-a_i;1}(\lm)}{1-\lm}  - (-1)^{\frac{pj-i}{N}}p^{-1} \dfrac{F_{a_j, 1-a_j;1}(\lm^p)}{1-\lm^p} \dfrac{d\lm^p}{d\lm}, \\
&\dfrac{d}{d\lm}E_{2, \tau}^{(i)}(\lm)= \dfrac{E_{1, \tau}^{(i)}(\lm) }{\lm(1-\lm)F_{a_i, 1-a_i;1}(\lm)^{2} }-(-1)^{\frac{pj-i}{N}}p^{-1}  \dfrac{F_{a_j, 1-a_j;1}(\lm^p)}{1-\lm^p}G_i^{(\tau)} (\lm) \dfrac{d\lm^p}{d\lm}, 
\end{align*} 
where $j \in \{1, \ldots, N-1\}$ is the unique integer such that $pj \equiv i \pmod{N}$. 
Moreover, we have $E_{1, \tau}^{(i)}(0)=0$. 
\end{thm}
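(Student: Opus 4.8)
The plan is to pin down the two coefficients $E^{(i)}_{1,\tau}(\lm)$ and $E^{(i)}_{2,\tau}(\lm)$ of the regulator element
\[
\Xi:=p_i\big(e_{\xi}-\Phi_{\tau}(e_{\xi})\big)\in\mathscr{O}(S_K)^{\dagger}(2)\otimes_{\mathscr{O}(S_K)}H^1_{\rm dR}(X_K/S_K)^{(i)}
\]
by first deriving a pair of coupled first-order differential equations, and then extracting integrality and the initial value from the canonical extension. Throughout I would write $c:=\tfrac1{N^2}(1-\nu_1^{-i})(1-\nu_2^{-(N-i)})$ and $F:=F_{a_i,1-a_i}(\lm)$, so that by \eqref{ext} one has $\Xi=c\,[E^{(i)}_{1,\tau}(\lm)\,\omega^*_i+E^{(i)}_{2,\tau}(\lm)\,\eta^*_i]$, and I would aim to compute $\nabla\Xi$ in two ways.

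First I would apply the Gauss--Manin connection. Since $\nabla\Phi_{\tau}=\Phi_{\tau}\nabla$, since $\nabla$ preserves the eigencomponents, and since $\Phi_{\tau}$ carries $H_j$ into $H_i$ (because $pj\equiv i\pmod N$), one gets $\nabla\Xi=p_i(\nabla e_{\xi})-\Phi_{\tau}\big(p_j(\nabla e_{\xi})\big)$. The de Rham derivative $\nabla e_{\xi}$ is independent of the choice of Frobenius and is exactly the class computed in the proof of \cite[Theorem 4.8]{As} (cf. \cite{AM}); its $(i)$-projection is $-c\,\tfrac{dt}{t}\otimes\omega_{i,N-i}$, which in the variable $\lm=1-t$ and the basis $\{\omega^*_i,\eta^*_i\}$ becomes $c\,\tfrac{F}{1-\lm}\,d\lm\otimes\omega^*_i$, with no $\eta^*_i$-component. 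For the Frobenius term I would feed in Theorem \ref{thm:frob}, namely $\Phi_{\tau}(\omega^*_j)=(-1)^{\frac{pj-i}{N}}p\,(\omega^*_i+G^{(\tau)}_i(\lm)\eta^*_i)$, together with $\Phi_{\tau}(d\lm)=\tfrac{d\lm^p}{d\lm}\,d\lm$, the scalar substitution $\lm\mapsto\lm^p$, the Tate-twist factor $p^{-2}$ from $\mathscr{O}(S_K)^{\dagger}(2)$, and the identity $\Phi_{\tau}(c_j)=c$ for $c_j:=\tfrac1{N^2}(1-\nu_1^{-j})(1-\nu_2^{-(N-j)})$ (valid because $\Phi_\tau$ is $F$-semilinear with $\nu\mapsto\nu^p$ and $pj\equiv i$, $p(N-j)\equiv N-i\pmod N$). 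The $p$-powers combine to the overall factor $(-1)^{\frac{pj-i}{N}}p^{-1}$.

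Comparing this with the direct expansion of $\nabla\Xi$ through the connection matrix of Proposition \ref{GM} (where $\nabla\eta^*_i=0$ and $\nabla\omega^*_i=-\lm^{-1}(1-\lm)^{-1}F^{-2}\,d\lm\otimes\eta^*_i$), I would cancel the common factor $c$ and match coefficients: the $\omega^*_i$-coefficient gives the stated equation for $\tfrac{d}{d\lm}E^{(i)}_{1,\tau}$, and the $\eta^*_i$-coefficient gives the one for $\tfrac{d}{d\lm}E^{(i)}_{2,\tau}$, with $G^{(\tau)}_i(\lm)$ entering precisely through the lower-left entry in Theorem \ref{thm:frob}. For integrality I would argue that $\Xi$ lies in Deligne's canonical extension $H_K^{(i,N-i)}(2)$: the Frobenius $\Phi_{\tau}$ is induced by the log-crystalline Frobenius of the relative NCD fibre at $\lm=0$ via Kato's comparison, so the regulator class extends across the boundary, and freeness of $H_K^{(i,N-i)}$ over $K[[\lm]]$ on $\{\omega^*_i,\eta^*_i\}$ then yields $E^{(i)}_{1,\tau},E^{(i)}_{2,\tau}\in K[[\lm]]$. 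The vanishing $E^{(i)}_{1,\tau}(0)=0$ is forced by the second equation: its right-hand side lies in $K[[\lm]]$, and since $(1-\lm)F^2$ is a unit while the remaining summand is divisible by $\lm^{p-1}$, the quotient $E^{(i)}_{1,\tau}(\lm)/\lm$ must be a power series; one could equally read this off from the residue map $R$, using $R(\eta^*_i)=0$ and $R(\omega^*_i)\neq0$.

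The main obstacle I anticipate is the Frobenius bookkeeping in the middle step: correctly combining the Tate twist $p^{-2}$, the factor $p$ and sign $(-1)^{\frac{pj-i}{N}}$ from Theorem \ref{thm:frob}, the Jacobian $\tfrac{d\lm^p}{d\lm}$, and the $\Phi_\tau$-equivariance $\Phi_\tau(c_j)=c$ of the root-of-unity prefactor, so that the two equations emerge with exactly the asserted normalisation. Conceptually, the delicate point is justifying that $\Xi$ genuinely extends to the canonical extension $H_K^{(i,N-i)}(2)$, since this is what makes the power-series statement and the initial condition $E^{(i)}_{1,\tau}(0)=0$ meaningful.
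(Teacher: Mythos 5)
Your derivation of the two differential equations is essentially the paper's own proof, step for step: you apply $\nabla$ to the defining decomposition, use $\Phi_{\tau}\nabla=\nabla\Phi_{\tau}$ and the eigencomponent shift $H_j\to H_i$, feed in $\nabla e_{\xi}=-d\log(\xi)$ from \cite[(2.30)]{AM}, the Frobenius matrix of Theorem \ref{thm:frob}, the connection matrix of Proposition \ref{GM}, the Tate-twist factor $p^{-2}$, and the semilinearity identity $\sigma(K_j)=K_i$ for the root-of-unity prefactors (the paper uses exactly this, with $K_i=N^{-2}(1-\nu_1^{-i})(1-\nu_2^{-(N-i)})$), arriving at the same normalisation $(-1)^{\frac{pj-i}{N}}p^{-1}\frac{d\lambda^p}{d\lambda}$. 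That part is correct and faithful to the paper.

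The genuine gap is in your integrality step. You assert that $\Xi=p_i(e_{\xi}-\Phi_{\tau}(e_{\xi}))$ lies in Deligne's canonical extension $H_K^{(i,N-i)}(2)$, and you deduce $E_{1,\tau}^{(i)},E_{2,\tau}^{(i)}\in K[[\lambda]]$ from freeness on $\{\omega_i^*,\eta_i^*\}$. But this extension claim is precisely what would need proof, and nothing you cite supplies it: the fibre at $\lambda=0$ (i.e.\ $t=1$) is the degenerate fibre of Lemma \ref{proj}, $e_{\xi}$ is only constructed with coefficients in $\mathscr{O}(S_K)^{\dagger}$ on the punctured base, and Kato's comparison together with the freeness lemma only equips $H_K$ with its Frobenius --- it does not say the regulator class has no pole at $\lambda=0$. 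Your fallback argument for $E_{1,\tau}^{(i)}(0)=0$ (divisibility of the right-hand side of the second equation by $\lambda^{p-1}$) presupposes $E_{2,\tau}^{(i)}\in K[[\lambda]]$, so it circularly depends on the same unproved claim. The paper shows the whole appeal to the canonical extension is unnecessary and the order of deductions matters: first, the right-hand side of \eqref{eq1} lies in $K[[\lambda]]$, and a Laurent-type series whose derivative is a power series is itself a power series, so $E_{1,\tau}^{(i)}\in K[[\lambda]]$; second, taking residues at $\lambda=0$ in \eqref{eq2} gives $E_{1,\tau}^{(i)}(0)=0$, because the residue of $\frac{d}{d\lambda}E_{2,\tau}^{(i)}$ vanishes \emph{regardless} of whether $E_{2,\tau}^{(i)}$ is yet known to be regular, the Frobenius term is divisible by $\lambda^{p-1}$, and $(1-\lambda)F_{a_i,1-a_i}(\lambda)^2\big|_{\lambda=0}=1$; only then does \eqref{eq2} yield $E_{2,\tau}^{(i)}\in K[[\lambda]]$. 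Replacing your canonical-extension assertion by this residue argument closes the gap with material you already have on the page.
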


\begin{proof}
Put $K_i=N^{-2} (1- \nu_1^{-i})(1-\nu_2^{-(N-i)})$. 
Apply $\nabla$ on \eqref{ext}. 
By Proposition \ref{GM} and using the fact that $\Phi_{\tau} \nabla= \nabla \Phi_{\tau}$, we have
\begin{align} \label{Pe}
p_i((1-\Phi_{\tau})(\nabla(e_{\xi})))=K_i \left(dE_{1, \tau}^{(i)} (\lm) \otimes\omega_i^* +\left(- \dfrac{E_{1, \tau}^{(i)}(\lm) d \lm }{\lm(1-\lm)F_{a_i, 1-a_i;1}(\lm)^2} +dE_{2, \tau}(\lm) \right) \otimes\eta_i^* \right). 
\end{align}
By \cite[(2.30)]{AM} and \cite[(4.25)]{As}, we have 
$$\nabla(e_{\xi}) =-d\log(\xi)=\dfrac1{N^2} \sum_{i, j=1}^{N} (1- \nu_1^{-i})(1-\nu_2^{-j}) \dfrac{d\lm}{1-\lm} \wedge \omega_{i, j}. $$
Therefore, the left-hand side of \eqref{Pe} is 
\begin{align*}
&K_i F_{a_i, 1-a_i;1}(\lm)\dfrac{d\lm}{1-\lm} \otimes\omega_i^* - p^{-2}  \sigma \left(K_j F_{a_j, 1-a_j;1}(\lm) \dfrac{d\lm}{1-\lm} \right) \otimes  \Phi_{\tau} (\omega_j^*)  \\
&=K_i F_{a_i, 1-a_i;1}(\lm)\dfrac{d\lm}{1-\lm} \otimes \omega_i^* - K_i p^{-1} (-1)^{\frac{pj-i}{N}}F_{a_j, 1-a_j;1}(\lm^p) \dfrac{d\lm^p}{1-\lm^p} \otimes  \left(\omega_i^* +G_i^{(\tau)} (\lm)\eta_i^*\right) \\
&=K_i \left(F_{a_i, 1-a_i;1}(\lm)\dfrac{d\lm}{1-\lm}  - p^{-1}(-1)^{\frac{pj-i}{N}} F_{a_j, 1-a_j;1}(\lm^p) \dfrac{d\lm^p}{1-\lm^p}  \right)\otimes \omega_i^*  
\\
 &  \qquad  \qquad - K_i(-1)^{\frac{pj-i}{N}}F_{a_j, 1-a_j;1}(\lm^p)G_i^{(\tau)} (\lm) \dfrac{p^{-1}d\lm^p}{1-\lm^p} \otimes\eta_i^* . 
\end{align*}
Therefore, we have 
\begin{align}
\dfrac{d}{d\lm}E_{1, \tau}^{(i)}(\lm)=  \dfrac{F_{a_i, 1-a_i;1}(\lm)}{1-\lm}  - (-1)^{\frac{pj-i}{N}}p^{-1} \dfrac{F_{a_j, 1-a_j;1}(\lm^p)}{1-\lm^p} \dfrac{d\lm^p}{d\lm}, \label{eq1}
\end{align}
and 
\begin{align}
\dfrac{d}{d\lm}E_{2, \tau}^{(i)}(\lm)= \dfrac{E_{1, \tau}^{(i)}(\lm) }{\lm(1-\lm)F_{a_i, 1-a_i;1}(\lm)^{2} }-(-1)^{\frac{pj-i}{N}}p^{-1}\dfrac{F_{a_j, 1-a_j;1}(\lm^p)}{1-\lm^p}G_i^{(\tau)} (\lm) \dfrac{d\lm^p}{d\lm} \label{eq2}. 
\end{align}
The differential equation \eqref{eq1} implies that $E_{1, \tau}^{(i)}(\lm) \in K[[\lm]]$. 
Since $G_i^{(\tau)}(\lm) \in K[[\lm]]$, by taking the residue at $\lm=0$ of both sides of \eqref{eq2}, one concludes that 
$$E_{1, \tau}^{(i)} (0)=0. $$
Since $E_{1, \tau} ^{(i)} (\lm) \in \lm K[[\lm]]$, the differential equation of \eqref{eq2} implies that $E_{2, \tau}^{(i)} (\lm) \in K[[\lm]]$, which finishes the proof. 
\end{proof}

\begin{rmk}
We have a complete description of the constant $E_{1, \tau}^{(i)}(\lm)$, while the constant $E_{2, \tau}^{(i)}(\lm)$ seems more delicate. 
\end{rmk}

\subsection{Proof of Theorem \ref{main:3}} \label{proof3} 
\begin{proof}[Proof of Theorem \ref{main:3}]
It follows from \eqref{ve1}, \eqref{ve2} and Theorem \ref{de} that 
$$\operatorname{ord}_{\lm =0} \e_{1, \tau}^{(i)}(\lm) \geq 1, \quad \operatorname{ord}_{\lm =0} \e_{2, \tau}^{(i)}(\lm) \geq 1. $$
By \cite[Lemma 4.12 (1)]{As}, $|h_{a_i, 1-a_i}(1)|_p=1$ if and only if $[F_{a_i, 1-a_i;1}(t)]_{<p^n}|_{t=1} \not \equiv 0 \pmod{p}$ for all $n \geq 1$, hence 
the function $f_n(t)$ in Lemma \ref{comp} and $F'_{a_i, 1-a_i;1}(t)/F_{a_i, 1-a_i;1}(t)$ converges at $t=1$ (see \cite[Lemma 3.4 (ii)]{Dw}).   
Therefore, by Lemma \ref{comp}, we have 
$$ \mathscr{F}_{a_i, b_j; 1}^{(\sigma)} (1)=\e_{1, \tau}^{(i)}(0)=0, $$
where the most left value is defined in \eqref{value}. 
\end{proof}

\section*{Acknowledgment}
The author sincerely thanks Noriyuki Otsubo and Masanori Asakura for valuable discussions and many helpful comments. 
The author also thanks Ryo Negishi for many helpful comments. 
The author would like to express his sincere gratitude to the anonymous
referee for many helpful comments.

\section*{Funding}
This paper is a part of the outcome of research performed under Waseda University Grant for Early Career Researchers (Project number: 2025E-041).

\end{document}